%        File: Gradient_Flow.tex Created: Sun Nov 27 09:00 PM 2016 E Last
%        File: Gradient_Flow.tex Created: Sun Nov 27 09:00 PM 2016 E Last
%        Change: Sun Nov 27 09:00 PM 2016 E
%

\documentclass[12pt]{article}
\baselineskip=2pc
\usepackage{amsfonts} \input{epsf.sty} \usepackage[dvips]{graphicx}
\usepackage{latexsym,amsmath,amsfonts,amscd, amsthm} \usepackage{enumitem}
\usepackage{epsfig} \usepackage{changebar} \usepackage{pstricks}
\usepackage{pst-plot} \usepackage{multirow} \usepackage{placeins}
\usepackage{amssymb} \usepackage{stmaryrd} \usepackage{graphicx}
\usepackage{subcaption}
%\usepackage{refcheck}

%\graphicspath{{./figures/}}
%\usepackage{tabu}
\usepackage[flushleft]{threeparttable} \topmargin-.5in \textheight9in
\oddsidemargin0in \textwidth6.5in

\newcommand{\ninti}{\overset{\sim}{\int_{I_i}}}
\newcommand{\nintI}{\overset{\sim}{\int_{I}}}
\newcommand{\nintj}{\overset{\sim}{\int_{J_j}}}
\newcommand{\nintJ}{\overset{\sim}{\int_{J}}}
\newcommand{\niintji}{\nintj\ninti}
\newcommand{\nintR}{\overset{\sim}{\int_\Omega}}

\newcommand{\intR}{\int_\Omega} 
\newcommand{\inti}{\int_{I_i}}
\newcommand{\intj}{\int_{I_j}}
\newcommand{\xx}{\tilde{x}} 
\newcommand{\yy}{\tilde{y}}
\newcommand{\mI}{\mathcal{I}} 
\newcommand{\sign}{\text{sign}}
\newcommand{\diag}{\text{diag}}
\newcommand{\vphi}{\varphi} 
\newcommand{\hf}{\frac{1}{2}}
\newcommand{\pre}{\textup{pre}}

\newtheorem{THM}{Theorem}[section] 
 
\newtheorem{LEM}{Lemma}[section] 
\newtheorem{REM}{Remark}[section] \theoremstyle{definition}
\newtheorem{examp}[subsubsection]{Example}
\newtheorem{Examp}[subsection]{Example}

\allowdisplaybreaks

% the paper begins here

\begin{document}

\baselineskip=2pc

\vspace*{.10in}

%%=============  title  =========================
\begin{center} 
  {\bf A discontinuous Galerkin method for nonlinear parabolic
  equations and gradient flow problems with interaction 
potentials} 
\end{center}

%\vspace{.1in}

\centerline{Zheng Sun\footnote{Division of Applied Mathematics, Brown
  University, Providence, RI 02912, USA. E-mail: zheng$\_$sun@brown.edu},
  Jos\'{e} A.
  Carrillo\footnote{Department of Mathematics, Imperial College London, London
SW7 2AZ, UK. E-mail: carrillo@imperial.ac.uk.
Research supported by the Royal Society via a Wolfson Research 
Merit Award and by EPSRC grant number EP/P031587/1.}  
  and 
  Chi-Wang Shu\footnote{Division of Applied Mathematics, Brown University,
    Providence, RI 02912, USA. E-mail: shu@dam.brown.edu.
Research supported by DOE grant DE-FG02-08ER25863 and 
NSF grant DMS-1418750.} }

\vspace{.2in}

\centerline{\bf Abstract} \bigskip

We consider a class of time dependent second order partial differential
equations governed by a decaying entropy. The solution usually
corresponds to a density distribution, hence positivity (non-negativity)
is expected.  This class of problems covers important cases such as Fokker-Planck
type equations and aggregation models, which have been studied intensively in
the past decades.  In this paper, we design a high order discontinuous Galerkin
method for such problems. If the interaction potential is not involved, or the
interaction is defined by a smooth kernel, our semi-discrete scheme admits an
entropy inequality on the discrete level.  Furthermore, by applying the
positivity-preserving limiter, our fully discretized scheme produces
non-negative solutions for all cases under a time step constraint.  Our method
also applies to two dimensional problems on Cartesian meshes.  Numerical
examples are given to confirm the high order accuracy for smooth test cases and
to demonstrate the effectiveness for preserving long time asymptotics.

\vfill

\noindent {\bf Keywords: discontinuous Galerkin method, positivity-preserving,
entropy-entropy dissipation relationship, nonlinear parabolic equation, gradient
flow} 

\clearpage

\section{Introduction} 
\label{sec-introduction} 
\setcounter{equation}{0}
\setcounter{figure}{0} 
\setcounter{table}{0}

%%***************************************************************************************
%                                Introduction
%****************************************************************************************

In this paper, we propose a discontinuous Galerkin method for solving the
initial value problem, 
\begin{equation} 
  \left\{ 
    \begin{aligned} &\partial_t \rho=
      \nabla\cdot\left(f(\rho)\nabla(H'(\rho)+V(\mathbf{x})+W\ast\rho)\right),
      \quad\mathbf x\in\Omega\subset \mathbb{R}^d,\quad t>0\label{eq-general},\\
      &\rho(\mathbf{x},0) = \rho_0(\mathbf{x}).\\ 
    \end{aligned}\right.  
\end{equation}
Here $\rho = \rho(\mathbf{x},t)\geq 0$ is a time-dependent density. 
$H'$ is strictly increasing and $W(\mathbf{x}) = W(-\mathbf{x})$ is
symmetric. We assume $f(\rho)\geq 0$ and it attains zero if 
and only if $\rho =0$. $f$ either increases strictly with
respect to $\rho$, or it satisfies the property $\frac{f(\rho)}{\rho}\leq C$
for some fixed constant $C$. 

Our concern for \eqref{eq-general} arises from two typical 
scenarios. The first
case is on nonlinear (possibly degenerate) parabolic equations,
\begin{equation}\label{eq-ndp} 
  \partial_t \rho = \nabla\cdot
  \left(f(\rho)\nabla\left(H'(\rho)+V(\mathbf{x})\right)\right).  
\end{equation} 
Many classic problems can be included in this setting, such as the heat 
equation, the porous media equation, the Fokker-Planck equation and so on.

In the second case, several authors take the interaction term $W\ast\rho$ into account,
while imposing $f(\rho) = \rho$. Hence the equation takes the form of a
continuity equation, and the velocity field is determined by the gradient of a
potential function.  
\begin{equation}\label{eq-Wgf} 
  \partial_t \rho = \nabla\cdot \left(\rho\mathbf{u}\right), \qquad
  \mathbf{u} = \nabla \xi =
  \nabla\left(H'(\rho)+V(x)+W\ast\rho\right).  
\end{equation} 
Here, $H$ is a density of internal energy, $V$ is a confinement potential, and
$W$ is an interaction potential. It is used to model, for example, interacting
gases \cite{carrillo2003kinetic,villani2003topics}, granular flows \cite{benedetto1997kinetic,BCCP} and
aggregation behaviors in biology \cite{mogilner1999non,topaz2006nonlocal}. This equation is
also related with the gradient flow for the Wasserstein metric on the space of
probability measures \cite{ambrosio2008gradient}. 

Both of the problems \eqref{eq-ndp} and \eqref{eq-Wgf} can be formulated under
the framework of \eqref{eq-general}. It has an underlying structure associated
with the entropy functional, 
\begin{equation}\label{eq-entropy} 
  E = \int_{\Omega}H(\rho(\mathbf{x})) d\mathbf{x}+ \int_{\Omega} V(\mathbf{x})
  \rho(\mathbf{x}) d\mathbf{x} +\frac{1}{2}\int_{\Omega}\int_{\Omega} 
  W(\mathbf{x}-\mathbf{y})\rho(\mathbf{x})\rho(\mathbf{y})
  d\mathbf{x}d\mathbf{y}.  
\end{equation} 
One can show that at least for classical solutions, 
\begin{equation}\label{eq-e-ed} 
  \frac{d}{dt}E(\rho) = -\int_{\Omega}f(\rho)|\mathbf{u}|^2 d\mathbf{x}
  : = - I(\rho) \le 0.  
\end{equation}
Here $\mathbf{u}$ is defined as that in \eqref{eq-Wgf} and $I$ is 
referred to as the entropy dissipation.

Indeed, \eqref{eq-e-ed} has provided much insight into the problem and has
helped people to study the dynamics of \eqref{eq-ndp} and \eqref{eq-Wgf}, see,
for example \cite{carrillo2001entropy,carrillo2003kinetic,villani2003topics}. Hence it
is desirable to develop numerical schemes mimicking a similar entropy-entropy
dissipation relationship in the discrete sense.

Another challenge for developing the numerical schemes is to ensure the
non-negativity of the numerical density without violating the mass conservation.
It is not only for the preservation of physical meanings, but also for the
well-posedness of the initial value problem. For example, in \eqref{eq-Wgf}, the
entropy may not necessarily decay if $\rho$ admits negative values. 

Numerical schemes addressing both of these concerns have been studied intensively
very recently. In \cite{bessemoulin2012finite}, the authors designed a second
order finite volume scheme for \eqref{eq-ndp}. Later on, a direct
discontinuous Galerkin method has been proposed by Liu and Wang in
\cite{liu2016entropy}.  Their scheme achieves high order accuracy but the
preservation of non-negativity only holds for limited cases. As for
\eqref{eq-Wgf}, a variety of numerical methods have been developed, including a
mixed finite element method \cite{burger2010mixed}, a finite volume method
\cite{carrillo2015finite}, a particle method \cite{carrillo2017numerical}, a
method of evolving diffeomorphisms \cite{carrillo2016numerical} and a blob
method \cite{craig2016blob} (for $H=0$, $V=0$).

In this paper, we design a high order discontinuous Galerkin (DG) method for
\eqref{eq-general}, which covers both \eqref{eq-ndp} and \eqref{eq-Wgf}. The DG
method is a class of finite element methods using spaces of discontinuous
piecewise polynomials and is especially suitable for solving
hyperbolic conservation
laws. Coupled with strong stability preserving Runge-Kutta (SSP-RK) time
discretization and suitable limiters (the so-called RKDG method, developed by 
Cockburn et al. in \cite{cockburn1991runge1, cockburn1989tvb2, cockburn1989tvb3, 
cockburn1990runge4, cockburn1998runge5}), the method captures shocks effectively 
and achieves high order accuracy in smooth regions \cite{cockburn2001runge}. 
The method has also been generalized for problems involving diffusion and higher
order derivatives, for example, the local DG method \cite{BR, cockburn1998local},
the ultra-weak DG method \cite{cheng2008discontinuous} and the direct DG method 
\cite{liu2009direct}. 

Our idea is to formally treat \eqref{eq-general} as a classical 
conservation law
and apply the techniques there to overcome the challenges. The main ingredients
for our schemes are:
\begin{enumerate}[topsep=0pt,itemsep=-1ex,partopsep=0ex,parsep=0ex] 
  \item Legendre-Gauss-Lobatto quadrature rule for numerical integration, 
  \item positivity-preserving limiter with SSP-RK time discretization.
\end{enumerate}
The quadrature is used to stabilize the semi-discrete scheme. Such approach has
already been studied in different contexts, such as in spectral collocation
methods \cite{hesthaven2007spectral} and in nodal discontinuous Galerkin methods
\cite{hesthaven2007nodal}. Recently, based on the methodology established in
\cite{carpenter2014entropy, gassner2013skew, gassner2016well}, Chen and Shu
proposed a unified framework for designing entropy stable DG scheme for
hyperbolic conservation laws using suitable quadrature rules
\cite{chen2017entropy}.  Their approach is also related with the
summation-by-part technique in finite difference methods. The
positivity-preserving limiter with provable high order accuracy is firstly
designed by Zhang and Shu in \cite{zhang2010maximum} to numerically ensure the
maximum principle of scalar hyperbolic conservation laws. Then the
methodology has been generalized for developing the bound-preserving schemes for
various systems. We refer to \cite{zhang2011maximum} and the references therein
for more details.  Our approach of implementing the positivity-preserving limiter for
parabolic problems is mainly inspired by the recent work of Zhang on
compressible Navier-Stokes equation \cite{zhang2017navier}, in which the author
considers the conservative form of the problem and introduces the diffusion 
flux to handle the second order derivatives.

Based on these techniques, we propose a discontinuous Galerkin scheme for
\eqref{eq-general} such that
\begin{enumerate}[topsep=0pt,itemsep=-1ex,partopsep=0ex,parsep=0ex]
  \item the semi-discrete scheme satisfies an entropy inequality for smooth $W$,
  \item the fully discretized scheme is positivity-preserving.
\end{enumerate} 
Our method also has other desired properties. It achieves high order accuracy,
conserves the total mass and preserves numerical steady states. Special care is needed for
the case of non-smooth interaction potential, due
to the fact that one should adopt exact integration to calculate 
the convolution, see \cite{carrillo2015finite}, as the Gauss-Lobatto quadrature is no longer accurate.

The remaining part of the paper is organized as follows. In Section
\ref{sec-1d-scheme}, we design the numerical method for one dimensional
problems. We firstly introduce the notations and briefly discuss our motivation
on deriving the scheme. Then it follows with the semi-discrete scheme and the
discrete entropy inequality. The next part is on the time discretization and the
positivity-preserving property of the fully discretized scheme. Finally we
outline the matrix formulation and the algorithm flowchart. Section
\ref{sec-2d-scheme} is organized similarly for two dimensional problems on
Cartesian meshes, while the implementation details are omitted. Then in Section
\ref{sec-num-result-1d} and Section \ref{sec-num-result-2d}, we present
numerical examples for one dimensional and two dimensional problems
respectively.  Finally conclusions are drawn in Section
\ref{sec-concluding-remarks}.

\section{Numerical method: one dimensional case}\label{sec-1d-scheme}
\setcounter{equation}{0} 
\setcounter{figure}{0}
\setcounter{table}{0}
%%***************************************************************************************
%                               Numerical method: 1D
%***************************************************************************************

\subsection{Notations and motivations}
\label{sec-prelim}

For now, we focus on the one dimensional case of \eqref{eq-general}
\begin{equation*} 
  \left\{ 
    \begin{aligned} 
      &\partial_t \rho = \partial_x\big(f(\rho)\partial_x(H'(\rho)+V(x)
      +W\ast\rho)\big),\quad x\in\Omega\subset \mathbb{R},\quad t>0
      \label{eq-general-1D},\\ 
      &\rho(x,0)= \rho_0(x).\\ 
    \end{aligned}
  \right.  
\end{equation*}
In general, the problem can be either defined on a connected compact domain
with proper boundary conditions, or it can involve the whole real line with
solutions vanishing at the infinity. In our numerical scheme, we will always
choose $\Omega$ to be a connected interval. For simplicity, 
the periodic or compactly supported boundary conditions are applied. But we
remark that our approach can be extended to more general types of boundary 
conditions, for example, with zero-flux boundaries.

Let $I_i = (x_{i-\hf},x_{i+\hf})$ and $I = \cup_{i=1}^N I_i$ be a regular
partition of the domain $\Omega$. Denote $h_i = x_{i+\hf}-x_{i-\hf}$ and $h =
\max_i h_i$.  We will seek a numerical solution in the discontinuous piecewise
polynomial space, \[V_h = \{v_h:v_h\big|_{I_i}\in P^k(I_i), \text{for }x\in
I_i,i=1,\dots,N\}.\] Here $P^k(I_i)$ is the space of $k$-th order polynomials
on $I_i$. Note that the functions in $V_h$ can be double-valued at the cell
interfaces. Hence the notations $v_h^+$ and $v_h^-$ are introduced for the
right limit and the left limit of $v_h$. For a function $s = s(\rho)$ or $s =
s(\rho,x)$, we denote by $s_h = s(\rho_h)$ or $s_h = s(\rho_h,x)$ respectively.
Furthermore, the notation $s_h^+$ stands for $s(\rho_h^+)$ or $s(\rho_h^+,x)$,
and $s^-_h$ stands for $s(\rho_h^-)$ or $s(\rho_h^-,x)$. 

To define the DG method, we split the original problem into the following system, 
\begin{subequations}
  \begin{align*} 
    \partial_t \rho &= \partial_x(f(\rho)u),\\ 
                  u &= \partial_x\xi,\\ 
                \xi &=  H'(\rho) + V + W\ast\rho.
  \end{align*}
\end{subequations} 
By applying the DG approximation, we obtain the following scheme as a
\emph{rudiment}.  We seek $\rho_h,u_h \in V_h$, such that for any test function
$\vphi_h, \psi_h \in V_h$,
\begin{subequations}\label{eq-1d-rudiment} 
  \begin{align} 
    \inti(\partial_t\rho_h) \vphi_h dx &= -\inti f_h u_h \partial_x \vphi_h dx +
    \widehat{f u}_{i+\hf} (\vphi_h)_{i+\hf}^- -\widehat{fu}_{i-\hf}(\vphi_h)_{i-\hf}^+,
      \label{sch-1dexac-1}\\ 
      \inti u_h \psi_h dx &= -\inti \xi_h \partial_x\psi_h dx + \widehat{\xi}_{i+\hf}
      (\psi_h)_{i+\hf}^- -\widehat{\xi}_{i-\hf}(\psi_h)_{i-\hf}^+. \label{sch-1dexac-2} 
  \end{align}
\end{subequations} 
Here $\xi_h = H'_h + V + \intR W(x-y)\rho_h(y) dx dy$, while $\widehat{f u}$ and
$\widehat\xi$ are numerical fluxes.

By setting $\vphi_h(x) = 1_{I_i}(x)$, one will get the following evolution
equation for the cell average of $(\rho_h)_i$, which is denoted by
$(\bar{\rho}_h)_i$.  
\begin{equation}\label{eq-1D-naive-average}
  \frac{d}{dt}\left(\bar{\rho}_h\right)_i =\frac{\widehat{f u}_{i+\hf}
-\widehat{f u}_{i-\hf}}{h_i}.  
\end{equation} 
This is very similar to that in hyperbolic conservation laws. In particular,
for $k=0$ and $u\equiv 1$, by using the so called monotone flux,
\eqref{eq-1D-naive-average} will become a monotone scheme, which satisfies many
desirable properties.

In order to achieve an entropy-entropy dissipation relationship as that for the
exact solution, one may hope to set $\vphi_h = \xi_h$ in \eqref{sch-1dexac-1}
and $\psi_h = u_hf_h$ in \eqref{sch-1dexac-2}. Unluckily, neither of them
falls into the test function space. A natural attempt is to use the usual $L^2$
projection to enforce this property, as that in \cite{liu2016entropy}. However,
the projection will change the values at the cell interfaces, and the desired
form in \eqref{eq-1D-naive-average} will be violated. This will cause trouble
when one seeks to preserve the non-negativity of the solution. Inspired by the
recent work of Chen and Shu in \cite{chen2017entropy}, we introduce a
suitable quadrature to overcome this difficulty. Moreover, the
Gauss-Lobatto quadrature is
used to preserve the values at the cell ends.

Let us denote by $\{x^r_i\}_{r=1}^{k+1}$ the $k+1$ Gauss-Lobatto quadrature
points on $I_i$ and $\{w_r\}_{r=1}^{k+1}$ the $k+1$ Gauss-Lobatto quadrature
weights on $[-1,1]$. In particular $x^1_i = x^+_{i-\hf}$ and $x^{k+1}_i =
x^-_{i+\hf}$. On each cell $I_i$, the operator $\mI$ returns the $k$-th order 
polynomial interpolating at $\{x^r_i\}_{r=1}^{k+1}$.  We will use the
notation 
\[\ninti \eta\zeta dx=
\frac{h_i}{2}\sum_{r=1}^{k+1} w_r \eta(x_i^r)\zeta(x_i^r)\] 
and 
\[\ninti\eta\partial_x\zeta dx= \frac{h_i}{2}\sum_{r=1}^{k+1} w_r
\eta(x_i^r)(\partial_x\mI\zeta)(x_i^r)\]
for the Gauss-Lobatto quadrature. As a convention, $\nintR$ stands for $\sum_i\ninti$.

We will finally come to the fully discretized scheme, for which
we denote by $\tau$ the 
time step length and $\lambda_i = \frac{\tau}{h_i}$.

\subsection{Semi-discrete scheme and entropy inequality}\label{sec-1d-semi}

Our scheme is to replace the integrals in \eqref{eq-1d-rudiment} by
the Gauss-Lobatto quadrature. In other words, we seek $\rho_h, u_h \in{V_h}$, such
that for any test functions $\vphi_h, \psi_h \in V_h$,
\begin{subequations}\label{sch-1D} 
  \begin{align} 
    \ninti (\partial_t\rho_h) \vphi_h dx &= -\ninti f_h  u_h \partial_x{\vphi_h} dx 
    + \widehat{fu}_{i+\hf} (\vphi_h)_{i+\hf}^- -\widehat{fu}_{i-\hf}(\vphi_h)_{i-\hf}^+,
    \label{eq-sch-1dnum-1}\\ \ninti u_h \psi_h dx
    &= -\ninti \xi_h \partial_x\psi_h dx + \widehat{\xi}_{i+\hf}(\psi_h)_{i+\hf}^- -
    \widehat{\xi}_{i-\hf}(\psi_h)_{i-\hf}^+. \label{eq-sch-1dnum-2} 
  \end{align}
\end{subequations} 
Here, when the interaction potential $W$ is smooth, we set
\begin{equation*} 
  \xi_h = \xi_h(\rho_h,x) = H'_h + V + \nintR W(x-y)\rho_h(y)dy.  
\end{equation*} 
While for non-smooth $W$, the quadrature may not achieve sufficient accuracy of the 
convolution. Hence the exact integration is applied  
\begin{equation*} 
  \xi_h = \xi_h(\rho_h,x) = H'_h + V + \intR W(x-y)\rho_h(y) dy.  
\end{equation*} 
The numerical fluxes are chosen in the following way, 
\begin{subequations} 
  \begin{align*}  
    \widehat{\xi}&= \frac{1}{2}(\xi_h^+ + \xi_h^-),\\ 
    \widehat{fu} &= \frac{1}{2}(f_h^+u_h^++ f_h^-u_h^-)+\frac{\alpha}{2}(g_h^+-g_h^-),
    \quad \alpha = \max\{|u^+_h|,|u^-_h|\}, 
  \end{align*} 
\end{subequations} 
with $g(\rho) = f(\rho)$ if $f$ is increasing and $g(\rho) = C\rho$ if
$\frac{f(\rho)}{\rho}\leq C$. $\widehat{\xi}$ is the central flux. When $f(\rho)
= \rho$, one can choose $g(\rho) = \rho$ and $\widehat{fu}$ is the 
Lax-Friedrich flux. 

\begin{REM}
  Although we formally require that $\varphi_h$ and $\psi_h$ are taken from the
  finite element space, our scheme \eqref{sch-1D} actually can not distinguish a
  function from its interpolation polynomial at the Gauss-Lobatto points.
  Hence, \eqref{sch-1D} will also hold for ``test functions'' outside $V_h$.
  This facilitates our proof of the discrete entropy inequality. This fact can
  also be justified through the matrix formulation, which is presented in Section
  \ref{sec-imp}.
\end{REM}

This semi-discrete scheme satisfies the following entropy inequality.  
\begin{THM} \label{thm:main}
For smooth interaction kernel $W$, assume that the semi-discrete scheme \eqref{sch-1D} has a solution, then it satisfies a similar entropy-entropy dissipation relationship, as that in \eqref{eq-e-ed}, given by
  \begin{equation*}%\label{eq-disc-ei} 
    \frac{d}{dt}\tilde{E} \leq -\tilde{I}, 
  \end{equation*} 
  where 
  \begin{equation*}
    \tilde{E} = \nintR H(\rho_h) dx + \nintR V\rho_h dx + \frac{1}{2}
    \nintR\nintR W(x-y)\rho_h(x)\rho_h(y) dx dy
  \end{equation*} is the discrete entropy and 
  \begin{equation*} 
    \tilde{I} =  \nintR f_h|u_h|^2 dx
  \end{equation*} is the associated discrete entropy dissipation. 

  {Indeed, one can choose larger $\alpha_{i+\hf}$ in the 
Lax-Friedrich flux. This will bring in extra numerical dissipation 
and the entropy inequality will still hold. Moreover, if 
$\alpha_{i+\hf}>0$ for all $i$ and the semi-discrete scheme 
\eqref{sch-1D} achieves a non-negative stationary 
state $\rho_h$, then $\rho_h$ is continuous and the piecewise 
polynomial interpolation of $\xi_h$ is constant in each 
connected component $J$ of the support of $\rho_h$, which is 
defined by $J=\cup_{i\in\Lambda} I_i$ for certain set of 
consecutive indices $\Lambda$ where $\rho_h>0$.}
\end{THM}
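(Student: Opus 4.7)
The plan is to mirror the continuous computation \eqref{eq-e-ed} at the discrete level by testing \eqref{eq-sch-1dnum-1} with $\vphi_h=\xi_h$ and \eqref{eq-sch-1dnum-2} with $\psi_h=f_h u_h$. Neither lies in $V_h$, but by the remark preceding the theorem the scheme only reads the $k+1$ Gauss-Lobatto nodal values per cell, so we may freely replace each by its cellwise interpolant $\mI\xi_h,\mI(f_h u_h)\in V_h$ without altering either equation. Differentiating $\tilde E$ in time and using $W(x)=W(-x)$ to symmetrize the two convolution contributions gives
\begin{equation*}
\frac{d}{dt}\tilde{E}=\nintR\Bigl[H'(\rho_h)+V+\nintR W(x-y)\rho_h(y)\,dy\Bigr]\partial_t\rho_h\,dx=\nintR\xi_h\,\partial_t\rho_h\,dx,
\end{equation*}
where the smoothness of $W$ is exactly what allows the inner quadrature $\nintR$ to match the $\xi_h$ used by the scheme.

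The algebraic core is the summation-by-parts property of the $(k+1)$-point Gauss-Lobatto rule: it is exact for polynomials of degree at most $2k-1$, and each cell endpoint is itself a quadrature node so interpolation preserves the boundary values there. Applied to the two $V_h$ factors $\mI\xi_h$ and $\mI(f_h u_h)$ this yields
\begin{equation*}
\ninti f_h u_h\,\partial_x\xi_h\,dx+\ninti\xi_h\,\partial_x(f_h u_h)\,dx=\xi_h\,f_h u_h\Big|_{x_{i-\hf}^{+}}^{x_{i+\hf}^{-}}.
\end{equation*}
Adding the two tested equations, summing over $i$ with periodicity, and regrouping interface contributions via the identity $a^+b^+-a^-b^-=\hf(a^++a^-)(b^+-b^-)+(a^+-a^-)\hf(b^++b^-)$ collapses the bulk terms and leaves a sum over interfaces of two pieces, one multiplied by $\hf(\xi_h^++\xi_h^-)-\widehat{\xi}$ and the other by $\hf((f_hu_h)^++(f_hu_h)^-)-\widehat{fu}$. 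The central flux $\widehat{\xi}=\hf(\xi_h^++\xi_h^-)$ kills the first, and $\widehat{fu}=\hf((f_hu_h)^++(f_hu_h)^-)+\tfrac{\alpha}{2}(g_h^+-g_h^-)$ reduces the second to $-\sum_{i}\tfrac{\alpha_{i+\hf}}{2}(g_h^+-g_h^-)(\xi_h^+-\xi_h^-)$.

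To close the entropy inequality I would show each interface term is nonpositive. Continuity of $V$ in $x$ and of the convolution $W*\rho_h$ (secured by the smoothness of $W$) trims $\xi_h^+-\xi_h^-$ down to $H'(\rho_h^+)-H'(\rho_h^-)$, whose sign matches that of $\rho_h^+-\rho_h^-$ by the strict monotonicity of $H'$. The same is true of $g_h^+-g_h^-$ under either hypothesis on $f$ (either $g=f$ with $f$ strictly increasing, or $g(\rho)=C\rho$). The product is therefore nonnegative, so $\frac{d}{dt}\tilde E\leq-\tilde I$; and since any enlargement of $\alpha_{i+\hf}$ only scales a nonnegative quantity by a larger factor, the inequality survives.

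For the steady-state statement, $d\tilde E/dt=0$ forces simultaneously $\tilde I=0$ and the vanishing of every interface contribution. With each $\alpha_{i+\hf}>0$, the sign analysis above makes every $(g_h^+-g_h^-)(\xi_h^+-\xi_h^-)$ vanish, and therefore $\rho_h^+=\rho_h^-$ at each interface, giving continuity of $\rho_h$. From $\tilde I=0$, positivity of the Gauss-Lobatto weights and the hypothesis $f(\rho)=0\iff\rho=0$ yield $u_h(x_i^r)=0$ at every Gauss-Lobatto node where $\rho_h>0$; on any cell $I_i\subset J$ this kills $u_h$ at all $k+1$ nodes, so $u_h\equiv 0$ on $I_i$. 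Reinserting $u_h\equiv 0$ into \eqref{eq-sch-1dnum-2}, applying SBP once more, and using the now-established continuity of $\xi_h$ at the interfaces of $J$ to wipe out the $\widehat{\xi}-\xi_h^\pm$ contributions produces $\ninti\psi_h\,\partial_x\mI\xi_h\,dx=0$ for every $\psi_h\in V_h$; taking $\psi_h=\partial_x\mI\xi_h$ forces $\partial_x\mI\xi_h\equiv 0$ on every cell of $J$, and the continuity of $\xi_h$ at the interior interfaces of $J$ glues the cellwise constants into a single constant on $J$. The main obstacle is the bookkeeping in paragraph 2: invoking the $(2k-1)$-exactness of Gauss-Lobatto at exactly the right moments and correctly assembling the three contributions per interface (two numerical fluxes plus one SBP endpoint) into the compact quadratic form above; after that step everything reduces to the monotonicity hypotheses on $H'$, $f$, and $g$.
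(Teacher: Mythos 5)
Your proposal is correct and follows essentially the same route as the paper: testing the two equations with the interpolants of $\xi_h$ and $f_hu_h$, exploiting the $(2k-1)$-exactness of the Gauss--Lobatto rule to integrate by parts (the paper phrases your SBP identity as ``replace quadrature by exact integral, integrate by parts, change back''), cancelling interface terms via the central and Lax--Friedrichs fluxes to arrive at the same residual $-\sum_i\tfrac{\alpha_{i+\hf}}{2}[g_h]_{i+\hf}[\xi_h]_{i+\hf}$, and then running the identical sign and steady-state analysis. The only cosmetic difference is that you add the two tested equations and regroup with the jump--average product identity, while the paper substitutes them sequentially.
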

\begin{proof} 
Using the symmetry of $W$, we have 
\begin{equation*} 
  \begin{split}
    &\frac{d}{dt}\frac{1}{2} \nintR\nintR W(x-y)\rho_h(x)\rho_h(y) dx dy\\ 
    =& \frac{1}{2}\nintR \partial_t\rho_h(x)\left( \nintR W(x-y) \rho_h(y)dy\right)dx 
    + \frac{1}{2}\nintR \partial_t\rho_h(y)\left( \nintR W(x-y) \rho_h(x) dx\right)dy\\ 
    =& \nintR \partial_t \rho_h(x)\left( \nintR W(x-y)\rho_h(y) dy\right) dx. 
  \end{split} 
\end{equation*} 
Hence,
\begin{equation*} 
  \begin{split} 
    \frac{d}{dt}\tilde{E} &=
      \nintR\partial_t\rho_h(x)\left(H'_h + V +\nintR W(x-y)\rho_h(y)dy\right) dx\\
      &=  \nintR\partial_t\rho_h \xi_h dx = \sum_i\ninti \partial_t\rho_h \mI(\xi_h) dx\\ 
      &= \sum_i \left(-\ninti\mI(f_hu_h)\partial_x\mI(\xi_h)dx + 
      \widehat{fu}_{i+\hf}(\xi_h)_{i+\hf}^--\widehat{fu}_{i-\hf}(\xi_h)_{i-\hf}^+\right).
  \end{split}
\end{equation*} 
Note $\mI(f(\rho_h)u_h)\partial_x\mI(\xi_h)$ is a polynomial of degree $2k-1$
and the Gauss-Lobatto quadrature with $k+1$ points is exact. Hence we can
replace the quadrature with the exact integral and integrate by parts. Then we
obtain
\begin{equation*} 
  \begin{split} 
    \frac{d}{dt}\tilde{E}= \sum_i &\left(-\inti\mI(f_hu_h)\partial_x\mI(\xi_h)dx 
      + \widehat{fu}_{i+\hf}(\xi_h)_{i+\hf}^-
      -\widehat{fu}_{i-\hf}(\xi_h)_{i-\hf}^+\right)\\
       = \sum_i &\bigg(\inti (\partial_x \mI(f_hu_h))\mI(\xi_h)dx
      -(f_hu_h\xi_h)_{i+\hf}^-\\
      \qquad\qquad&+(f_h u_h\xi_h)_{i-\hf}^+ + \widehat{f
        u}_{i+\hf}(\xi_h)_{i+\hf}^-
      -\widehat{fu}_{i-\hf}(\xi_h)_{i-\hf}^+\bigg).\\
  \end{split} 
\end{equation*} 
By using the same trick, we change the exact integral back to the Gauss-Lobatto
quadrature, and apply the scheme \eqref{eq-sch-1dnum-2} to obtain 
\begin{align}\label{entrdis}  
    \frac{d}{dt}\tilde{E} =  &\sum_i\left(\ninti \xi_h
      \partial_x (f_h u_h)dx - (f_h u_h\xi_h)_{i+\hf}^-+
      (f_hu_h\xi_h)_{i-\hf}^+ + \widehat{fu}_{i+\hf}(\xi_h)
    _{i+\hf}^--\widehat{fu}_{i-\hf}(\xi_h)_{i-\hf}^+\right)\nonumber\\ 
    =&\sum_i \bigg(-\ninti f_h|u_h|^2dx+\widehat{\xi}_{i+\hf}
      (f_hu_h)_{i+\hf}^--\widehat{\xi}_{i-\hf}(f_hu_h)_{i-\hf}^+
    - (f_hu_h\xi_h)_{i+\hf}^-\nonumber\\
    &+(f_hu_h\xi_h)_{i-\hf}^+ +\widehat{f u}_{i+\hf}(\xi_h)_{i+\hf}^-
    -\widehat{fu}_{i-\hf}(\xi_h)_{i-\hf}^+\bigg)\nonumber\\ 
    = &-\nintR f_h |u_h|^2 dx - \sum_i
    \frac{\alpha_{i+\hf}}{2}[g_h]_{i+\hf}[\xi_h]_{i+\hf},  
\end{align}
where the bracket represents the jump, $[g_h] = g_h^+-g_h^-$. According to our
choice of $g$, $\sign[g_h] = \sign[\rho_h]$. 
{
  Since $V$ and $W$ are single-valued 
  functions, $[\xi_h] = [H'(\rho_h)]$. By using the fact that 
  $H'$ is strictly increasing, 
  we have $\sign[\xi_h] = \sign[H'(\rho_h)] =\sign[\rho_h]$. 
  Therefore $\sum_i \frac{\alpha_{i+\hf}}{2}[g]_{i+\hf}
[\xi_h]_{i+\hf} \geq 0$, which completes the proof of the first claim.}

{
  It is easy to see that the entropy inequality will hold as long as the coefficients $\alpha_{i+\hf}$ are non-negative. Let us assume now that $\alpha_{i+\hf}>0$ for all $i$ and $\rho_h$ is a non-negative 
stationary state of the semi-discrete scheme \eqref{sch-1D}, namely 
\[\frac{d}{dt}\tilde{E} = 0.\]
Then from 
\eqref{entrdis} we deduce that both terms in the right-hand side must vanish, that is
$$
\sum_i \ninti f_h |u_h|^2 dx = \sum_i \frac{\alpha_{i+\hf}}{2}[g_h]_{i+\hf}[\xi_h]_{i+\hf}=0\,.
$$
Therefore each term in the summations will be zero. On the one hand, if $\alpha_{i+\hf}>0$, then $[g_h]_{i+\hf}[\xi_h]_{i+\hf} = 0$ and hence $[\rho_h]_{i+\hf} = 0$. It holds for all $i$, which means the jumps of $\rho_h$ vanish on all the cell interfaces. This implies the continuity of $\rho_h$.
On the other hand,} in the interval $J$ we deduce that $u_h=0$ due to the positivity of $f_h$ implied 
by the positivity of $\rho_h$ and its definition. Hence for all 
$i\in\Lambda$, by using \eqref{eq-sch-1dnum-2}, one can obtain
\begin{equation*}
  \begin{split}
    \inti \partial_x(\mI\xi_h) \mI\psi_h dx &= -\inti \mI\xi_h \partial_x(\mI \psi_h)
    dx + (\xi_h\psi_h)_{i+\hf}^- - (\xi_h\psi_h)_{i-\hf}^+ \\
    &=  -\ninti \xi_h \partial_x\psi_h dx 
    + \widehat{\xi}_{j+\hf}(\psi_h)_{i+\hf}^- - \widehat{\xi}_{i+\hf}(\psi_h)_{i-\hf}^+ \\
    &= 0.
  \end{split}
\end{equation*}
Here $\xi_h^\pm = \widehat{\xi}$ is guaranteed by the continuity of $\xi_h$ (implied by 
that of $\rho_h$). Therefore, $\mI \xi_h$ is constant on each $i \in \Lambda$. 
Due to the fact that $\xi_h$ is continuous globally, all these constants must be the 
same and the piecewise polynomial interpolation of $\xi_h$ is constant on $J$.
\end{proof}

\subsection{Time discretization and preservation of positivity}

The semi-discrete scheme itself does not guarantee the positivity of the
numerical solution. If no special treatment is applied, one may produce
nonsense density with negative values and the problem can become illposed.
Hence we adopt the methodology developed by Zhang and Shu in
\cite{zhang2010maximum}, which enforces the positivity of the solution without
violating the mass conservation. Their idea is to incorporate a
positivity-preserving limiter into the strong stability preserving Runge-Kutta
(SSP-RK) time discretization. Under certain time step constraints, each Euler
forward step preserves the positivity of the cell average (referred to
as the weak
positivity in the literature). Then one can scale the solution, without
affecting spatial accuracy, to ensure the point-wise non-negativity. 
SSP-RK
time discretization will preserve the non-negativity of the solution in 
the Euler forward steps.

\subsubsection{First order Euler forward in time}

Let us firstly consider the Euler forward time stepping. We use the superscript
``$\pre$'' for the solution obtained by Euler forward method before applying the
positivity-preserving limiter.  The time discretization of
\eqref{eq-sch-1dnum-1} becomes 
\begin{equation}\label{sch-1dnum-full-1} 
  \ninti \frac{\rho_h^{n+1,\pre}-\rho_h^n}{\tau} \vphi_h dx = -\ninti f_h^nu_h^n
  \partial_x{\vphi_h} dx + (\widehat{f u})^n_{i+\hf} (\vphi_h)_{i+\hf}^-
  -(\widehat{f u})^n_{i-\hf}(\vphi_h)_{i-\hf}^+.  
\end{equation}
\begin{LEM}\label{lem-1D-posi-aver} 
  Suppose $\rho_h^n(x_i^r)\geq 0$ at the Gauss-Lobatto quadrature points.  Then
  when $\lambda_i \leq \min\{(\frac{w_1\rho}{fu+\alpha g})_{i-\hf}^+,
  (\frac{w_{k+1}\rho}{\alpha g-fu})_{i+\hf}^-\}$, the solution $\rho_h^
  {n+1,\pre}$ obtained from \eqref{sch-1dnum-full-1} satisfies $\left(
  \bar\rho_h\right)_i^{n+1,\pre}\geq 0$. Here, in the constraint of
  $\lambda_i$, we consider $\frac{0}{0} := +\infty$ by convention.
\end{LEM}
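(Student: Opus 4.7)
I plan to follow the classical Zhang--Shu cell-average argument, adapted to the Gauss--Lobatto-collocated DG formulation at hand. First I would test \eqref{sch-1dnum-full-1} with the constant test function $\varphi_h\equiv 1$ on $I_i$: the volume term drops out, and since the $(k{+}1)$-point Gauss--Lobatto rule is exact for polynomials of degree $\le 2k-1$, the quadrature $\ninti\rho_h\,dx$ coincides with the true integral $h_i(\bar\rho_h)_i$. This yields the standard conservative cell-average update
\[
(\bar\rho_h)_i^{n+1,\pre} \;=\; (\bar\rho_h)_i^n + \lambda_i\bigl[(\widehat{fu})_{i+\hf}^n - (\widehat{fu})_{i-\hf}^n\bigr].
\]
Then I would expand $(\bar\rho_h)_i^n = \tfrac12\sum_{r=1}^{k+1} w_r\rho_h^n(x_i^r)$, set aside the interior nodes $r=2,\dots,k$ (which are $\ge 0$ by hypothesis), and carry only the two boundary contributions $\tfrac{w_1}{2}\rho_h^n(x_i^1)$ and $\tfrac{w_{k+1}}{2}\rho_h^n(x_i^{k+1})$, where $x_i^1=x_{i-\hf}^+$ and $x_i^{k+1}=x_{i+\hf}^-$.

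Next I would split the Lax--Friedrichs flux into a current-cell piece and a neighbour piece at each interface,
\[
(\widehat{fu})_{i-\hf} \;=\; \tfrac12(fu+\alpha g)^+_{i-\hf} + \tfrac12(fu-\alpha g)^-_{i-\hf},
\]
and symmetrically at $x_{i+\hf}$. The pivotal inequality I must prove is the pointwise viscosity bound $\alpha g \ge |fu|$ at each one-sided trace. This divides naturally into the two structural cases defining $g$: if $f$ is strictly increasing I take $g=f$ and use $\alpha\ge|u|$; if $f(\rho)/\rho\le C$ I take $g=C\rho$ and combine $f\le C\rho$ with $\alpha\ge|u|$. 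In either case the two neighbour-side contributions
\[
-\tfrac{\lambda_i}{2}(fu-\alpha g)^-_{i-\hf}\quad\text{and}\quad +\tfrac{\lambda_i}{2}(fu+\alpha g)^+_{i+\hf}
\]
are already $\ge 0$ with no CFL required, because the one-sided traces $\rho_h^\mp$ from the neighbouring cells are $\ge 0$ by hypothesis, so $f_h^\mp, g_h^\mp\ge 0$.

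All that remains are the two current-cell pairs
\[
\tfrac{w_1}{2}\rho_h^n(x_i^1) - \tfrac{\lambda_i}{2}(fu+\alpha g)^+_{i-\hf}\quad\text{and}\quad \tfrac{w_{k+1}}{2}\rho_h^n(x_i^{k+1}) - \tfrac{\lambda_i}{2}(\alpha g - fu)^-_{i+\hf},
\]
each of which is manifestly $\ge 0$ precisely under the stated CFL bound. Both denominators are themselves $\ge 0$ by the same viscosity inequality, and the convention $\tfrac{0}{0}:=+\infty$ covers exactly the degenerate nodes where $\rho_h^\pm$ (and hence $f_h^\pm, g_h^\pm$) vanish. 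Summing the five nonnegative contributions yields $(\bar\rho_h)_i^{n+1,\pre}\ge 0$.

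The main obstacle is the sign bookkeeping in the middle step, specifically verifying $\alpha g\ge|fu|$ on each one-sided trace; this is the sole reason for introducing $g$ with its two-case definition. The linear case $f(\rho)=\rho$, typical of the continuity-equation form \eqref{eq-Wgf}, is precisely where the choice $g=f$ fails to dominate the physical flux by the Lax--Friedrichs viscosity, forcing the alternative $g=C\rho$. Once this bound is established, every remaining step is direct algebra.
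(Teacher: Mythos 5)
Your proof is correct and follows essentially the same route as the paper's: test with $\varphi_h\equiv 1$, use exactness of the Gauss--Lobatto rule to expand the cell average over the nodal values, split the Lax--Friedrichs flux at each interface into a current-cell trace and a neighbour trace, absorb the two current-cell flux pieces into the boundary-node weights $\tfrac{w_1}{2}\rho^+_{i-\hf}$ and $\tfrac{w_{k+1}}{2}\rho^-_{i+\hf}$ under exactly the stated CFL bound, and dispose of the remaining terms via the pointwise viscosity inequality $\alpha g\ge |fu|$. One cosmetic slip in your closing commentary: for $f(\rho)=\rho$ the choice $g=f=\rho$ \emph{does} satisfy $\alpha g=\alpha\rho\ge|u|\rho=|fu|$ (it coincides with $g=C\rho$ for $C=1$), so that case does not in fact force the second branch of the definition of $g$ --- but this remark is outside the proof and does not affect its validity.
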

\begin{proof} 
  We drop all subscripts $h$ in this proof. Take $\vphi = 1$ in \eqref{sch-1dnum-full-1}, we have 
  \begin{equation*} 
    \bar{\rho}^{n+1,\pre}_i = \bar{\rho}^n_i+\lambda_i\left({(\widehat{fu})^n_{i+\hf}
    -(\widehat{fu})^n_{i-\hf}}\right).
  \end{equation*} 
  Note that $\rho^n$ is a polynomial of degree $k$, the Gauss-Lobatto quadrature
  is exact for evaluating the cell average $\bar{\rho}_i^n$. More specifically, we
  have 
  \[\bar {\rho}_i^n =\frac{1}{h_i}\int_{I_i} \rho^n dx = 
  \sum_{r = 1}^{k+1}\frac{w_r}{2}\rho^n(x_i^r).\]
  The superscripts $n$ will also be
  omitted for simplicity in the rest. Hence 
  \begin{equation*} 
    \begin{aligned} 
      \bar{\rho}_i^{n+1,\pre} =&
      \sum_{r = 2}^k \frac{w_r}{2} \rho(x^r_i) + \frac{w_1}{2}\rho_{i-\hf}^+ +
      \frac{w_{k+1}}{2}\rho_{i+\hf}^- + \frac{\lambda_i}{2}\left( (fu)^+_{i+\hf}+
      (fu)^-_{i+\hf} + \alpha_{i+\hf}
    \left(g_{i+\hf}^+-g_{i+\hf}^-\right)\right) \\ 
    &-\frac{\lambda_i}{2}\left((fu)^+_{i-\hf}+ (fu)^-_{i-\hf} + \alpha_{i-\hf}
    \left(g_{i-\hf}^+-g_{i-\hf}^-\right)\right) \\ 
    =& \sum_{r = 2}^k \frac{w_r}{2}\rho(x^r_i) + 
      \left(\frac{w_{k+1}}{2}\rho^-_{i+\hf}+\frac{\lambda_i}{2}\left( 
          (fu)^-_{i+\hf} -\alpha_{i+\hf}g_{i+\hf}^-\right)\right) \\
          &+ \left(\frac{\omega_1}{2}\rho_{i-\frac{1}{2}}^+ 
          -\frac{\lambda_i}{2}\left((fu)^+_{i-\hf}+\alpha_{i-\hf}g^+_{i-\hf}
          \right)\right) \\
    &+ \frac{\lambda_i}{2}\left((fu)^+_{i+\hf}+\alpha_{i+\hf}g_{i+\hf}^+\right) 
    - \frac{\lambda_i}{2}\left((fu)_{i-\hf}^- - \alpha_{i-\hf}g_{i-\hf}^-\right).\\ 
  \end{aligned}
\end{equation*} 
  The first term is automatically non-negative, since the weights $w_r\geq 0$ and
  the nodal values $\rho(x^r_i)\geq 0$. The positivity of the last two terms is
  guaranteed by our choice of $\alpha$ and $g$. One only needs $\lambda_i\leq\min
  \{(\frac{w_1\rho}{fu+\alpha g})_{i-\hf}^+,(\frac{w_{k+1}\rho}{\alpha
  g-fu})_{i+\hf}^-\}$ to ensure the second and the third term to be non-negative.
  (Note that for $\rho_{i-\hf}^+$ or $\rho_{i+\hf}^-$ being $0$, the corresponding
  term is also $0$ and there is nothing to impose. Hence we introduce the
  notation $\frac{0}{0}:=+\infty$.) Therefore $\bar{\rho}^{n+1,\pre}_i\geq 0$ under
  the prescribed time step constraint.
\end{proof}
\begin{REM} 
  \begin{enumerate}
  \
    \item According to the definition of $\alpha$ and $g$, 
      $(\frac{w_1\rho}{fu+\alpha g})_{i-\hf}^+$ and 
      $(\frac{w_{k+1}\rho}{\alpha g-fu})_{i+\hf}^-$ will always be
      non-negative. 

    \item Although the original equation can be parabolic, we have incorporated
      the second order derivative into $u$, such that one can formally treat it
      as a hyperbolic problem. This technique is introduced by Zhang for the 
      compressible Navier-Stokes equation \cite{zhang2017navier}. In particular, for
      $f(\rho) = \rho$, one obtains $\lambda_i(\alpha\pm u^\pm)\leq C$, If $u$
      is a constant (as that in Example \ref{examp:adv}), this is the usual CFL
      condition. While in general, the bound of $\lambda_i$ may scale like
      $h_i$ and it then gives a typical time step restriction for parabolic
      problems.  

  \end{enumerate}
\end{REM}

Lemma \ref{lem-1D-posi-aver} tells us an inherent property of the Euler forward
scheme. If the solution is non-negative at the previous time step (at the
Gauss-Lobatto quadrature points), as long as the time step is smaller than a
threshold, the cell average at next time step
will remain non-negative.  In order to close the
loop, one would need to ensure the nodal values at the quadrature points 
of the next time step are also
non-negative. This indeed can be achieved by applying a scaling limiter, which
luckily does not affect the spatial accuracy. We refer to 
\cite{zhang2017navier} for more details.

\begin{LEM}\label{lem-EF-scaling}
  Let 
  \[\rho_h^{n+1}(x_i^r) = (\bar \rho_h)_i^{n+1,\pre}+\theta_i\left(\rho_h^{n+1,\pre}
  (x_i^r) - (\bar\rho_h)_i^{n+1,\pre}\right),\qquad\forall r = 1,\dots,k+1,\] 
  with $\theta_i=\min\{\frac{ (\bar \rho_h)_i^{n+1,\pre} }{(\bar \rho_h)_i^{n+1,\pre} -
  m_i},1\}$ and $m_i = \min\{\rho_h^{n+1,\pre}(x_i^r)\}_{r=1}^{k+1}$. Then we
  have $\rho_h^{n+1}(x_i^r)\geq0,\forall r = 1,\dots,k+1$ and $\left(\bar{\rho}_h\right)
  _i^{n+1} =\left(\bar{\rho}_h\right)_i^{n+1,\pre}$. Furthermore, the interpolation 
  polynomial of $\{\rho_h^{n+1}(x_i^r)\}$ on $I_i$ satisfies \[|\rho_{h}^{n+1}(x) -
  \rho_h^{n+1,\pre}(x)|\leq C_k\max_{x\in I_i}|\rho(x,t_{n+1})-\rho_h^{n+1,\pre}(x)|
  ,\] where $\rho(x,t_{n+1})$ is the exact solution at time $t_{n+1}$ and $C_k$ is a 
  constant depending only on the polynomial degree $k$.
\end{LEM}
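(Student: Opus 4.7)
The plan is to verify the three assertions in turn, exploiting one key identity: since $\rho_h^{n+1}$ is the unique degree-$k$ polynomial interpolating the $k+1$ scaled nodal values and the scaling $\eta\mapsto \bar{q}+\theta_i(\eta-\bar{q})$ is affine in $\eta$ (writing $\bar{q}:=(\bar{\rho}_h)_i^{n+1,\pre}$), the pointwise identity
\[\rho_h^{n+1}(x) = \bar{q} + \theta_i\bigl(\rho_h^{n+1,\pre}(x) - \bar{q}\bigr),\qquad x\in I_i,\]
follows on the whole interval. This reduces the lemma to analyzing a simple affine combination of $\rho_h^{n+1,\pre}$ and its cell average, so the positivity and conservation claims reduce to elementary algebraic manipulations while the accuracy claim reduces to a polynomial norm-equivalence estimate.

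For the nodal positivity, I would split on the sign of $m_i$: if $m_i\geq 0$ then $\theta_i=1$ and nothing changes; if $m_i<0$, the bound $\bar{q}\geq 0$ from Lemma \ref{lem-1D-posi-aver} forces $\theta_i=\bar{q}/(\bar{q}-m_i)\in[0,1]$, and at the node attaining $m_i$ the scaled value is exactly $\bar{q}-\theta_i(\bar{q}-m_i)=0$, while at every other node $\rho_h^{n+1,\pre}(x_i^r)\geq m_i$ immediately gives $\rho_h^{n+1}(x_i^r)\geq 0$. For conservation, since the $(k+1)$-point Gauss-Lobatto rule is exact on polynomials of degree $\leq 2k-1$, applying it to $\rho_h^{n+1}$ and using the identity above collapses the result to $\bar{q}+\theta_i(\bar{q}-\bar{q})=\bar{q}$, because the same quadrature also exactly reproduces $\bar{q}$ from $\rho_h^{n+1,\pre}$.

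The accuracy estimate is the main obstacle. From the identity, $\rho_h^{n+1}(x)-\rho_h^{n+1,\pre}(x) = (1-\theta_i)(\bar{q}-\rho_h^{n+1,\pre}(x))$; the case $\theta_i=1$ is trivial, and otherwise $1-\theta_i=-m_i/(\bar{q}-m_i)$ and the non-negativity of $\rho(\cdot,t_{n+1})$ yields $E:=\max_{x\in I_i}|\rho(x,t_{n+1})-\rho_h^{n+1,\pre}(x)|\geq -m_i>0$. The crucial ingredient is a polynomial norm equivalence: the polynomial $P:=\rho_h^{n+1,\pre}-\bar{q}$ has zero cell average and nodal minimum $-(\bar{q}-m_i)$, and by a homogeneity/compactness argument on the finite-dimensional affine slice of degree-$\leq k$ polynomials on a reference interval with zero average and nodal minimum normalized to $-1$, one obtains $\max_{x\in I_i}|P(x)|\leq C_k(\bar{q}-m_i)$ with $C_k$ depending only on $k$ (translation and affine rescaling of $I_i$ do not alter the constant). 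Combining,
\[|\rho_h^{n+1}(x)-\rho_h^{n+1,\pre}(x)| \leq \frac{-m_i}{\bar{q}-m_i}\cdot C_k(\bar{q}-m_i) = -C_k m_i \leq C_k E,\]
which is the desired inequality. The technical subtlety is verifying that the normalized slice is compact: the constraint $\sum_r w_r P(x_i^r)=0$ combined with $P(x_i^r)\geq -1$ and positivity of the Gauss-Lobatto weights forces each nodal value to be uniformly bounded, so the polynomial itself is bounded on $I_i$ and $C_k<\infty$.
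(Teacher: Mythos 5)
Your proof is correct and is essentially the standard scaling-limiter argument of Zhang--Shu \cite{zhang2010maximum,zhang2017navier}, which is precisely what the paper invokes by reference instead of writing out: positivity and conservation follow from the affine form of the limiter, and the accuracy estimate combines $1-\theta_i=-m_i/(\bar q-m_i)$, the bound $-m_i\le \max_{I_i}|\rho(\cdot,t_{n+1})-\rho_h^{n+1,\pre}|$ coming from non-negativity of the exact solution, and the finite-dimensional norm equivalence $\max_{I_i}|\rho_h^{n+1,\pre}-\bar q|\le C_k(\bar q-m_i)$ for zero-average polynomials. Your justification of that last equivalence via the positive Gauss--Lobatto weights (which bound the nodal values above once they are bounded below and average to zero) together with the Lebesgue constant of the nodal interpolation is sound and makes the constant $C_k$ explicit.
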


\begin{REM} 
    Our scheme only uses the nodal values at the Gauss-Lobatto quadrature
      points, hence we only need to ensure the non-negativity at these nodes.
      One can also squash the solution polynomials so that the solution is
      non-negative everywhere on the domain.  The proof will still go through.
\end{REM}

\begin{THM}\label{thm-1d-euler}
  With the scaling limiter in Lemma \ref{lem-EF-scaling}, the Euler forward time
  discretization of the semi-discrete scheme is positivity-preserving, provided
  the time step restriction specified in Lemma \ref{lem-1D-posi-aver} is satisfied.  
\end{THM}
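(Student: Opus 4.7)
The plan is to prove the statement by a straightforward induction on the time level $n$, chaining together Lemma \ref{lem-1D-posi-aver} (which gives weak positivity of the cell average after an Euler step) with Lemma \ref{lem-EF-scaling} (which restores pointwise non-negativity at the Gauss-Lobatto nodes without changing the cell average).

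First I would fix the induction hypothesis as: at time level $n$, the numerical solution satisfies $\rho_h^n(x_i^r)\ge 0$ for every cell $I_i$ and every Gauss-Lobatto node $x_i^r$, $r=1,\dots,k+1$. The base case $n=0$ is ensured by taking the initial data $\rho_0\ge 0$ and, if necessary, initializing $\rho_h^0$ by nodal interpolation at the $x_i^r$ (or projecting and then squashing once), so that the hypothesis holds at $n=0$.

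For the inductive step, I would apply the Euler forward update \eqref{sch-1dnum-full-1} to produce the pre-limited iterate $\rho_h^{n+1,\pre}$. The time step restriction
\[
\lambda_i \le \min\Bigl\{\bigl(\tfrac{w_1\rho}{fu+\alpha g}\bigr)_{i-\hf}^+,\ \bigl(\tfrac{w_{k+1}\rho}{\alpha g-fu}\bigr)_{i+\hf}^-\Bigr\}
\]
from Lemma \ref{lem-1D-posi-aver}, combined with the inductive hypothesis $\rho_h^n(x_i^r)\ge 0$, yields $(\bar\rho_h)_i^{n+1,\pre}\ge 0$ for every $i$. Then I invoke Lemma \ref{lem-EF-scaling}: defining $\rho_h^{n+1}$ by the scaling limiter with $\theta_i=\min\bigl\{(\bar\rho_h)_i^{n+1,\pre}/((\bar\rho_h)_i^{n+1,\pre}-m_i),1\bigr\}$ gives nodal non-negativity $\rho_h^{n+1}(x_i^r)\ge 0$ for all $r$, and preserves the cell average, $(\bar\rho_h)_i^{n+1}=(\bar\rho_h)_i^{n+1,\pre}$. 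The former closes the inductive hypothesis at level $n+1$, while the latter shows that the limiter does not destroy mass conservation of the underlying scheme.

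Finally, I would note that the accuracy statement in Lemma \ref{lem-EF-scaling} guarantees $|\rho_h^{n+1}-\rho_h^{n+1,\pre}|\le C_k\max_{I_i}|\rho(\cdot,t_{n+1})-\rho_h^{n+1,\pre}|$, so the limiter does not reduce the formal spatial order of accuracy. I expect no real obstacle here: the theorem is essentially a packaging result, and the only nontrivial checks are (i) that the pre-limited average is non-negative under the prescribed CFL (already done in Lemma \ref{lem-1D-posi-aver}), and (ii) that the scaling is well defined, i.e. $(\bar\rho_h)_i^{n+1,\pre}-m_i>0$ whenever $m_i<0$, which is immediate because $(\bar\rho_h)_i^{n+1,\pre}\ge 0>m_i$ in that case (and $\theta_i=1$ otherwise). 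The slightly delicate point is only the base case, which is handled by choosing the initial representation so that the nodal values of $\rho_h^0$ are non-negative.
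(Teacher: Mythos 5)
Your proposal is correct and is exactly the argument the paper intends: the theorem is stated without a separate proof precisely because it is the inductive chaining of Lemma \ref{lem-1D-posi-aver} (weak positivity of the cell average under the CFL restriction) with Lemma \ref{lem-EF-scaling} (restoration of nodal non-negativity without changing the average), which is what you wrote. Your extra remarks on the base case and the well-definedness of $\theta_i$ are minor but sound additions.
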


\subsubsection{High order time discretization}

The SSP-RK method will be used for time discretization. We refer 
readers to
\cite{gottlieb2001strong} for more details. Since the time 
step scales like $\tau =
Ch^2$, the Euler forward method will be sufficient for piecewise linear elements
to achieve overall second order accuracy. For $k = 2,3$, we will use the second
order SSP-RK scheme  
\begin{subequations}\label{eq-SSP-RK-2} 
  \begin{align}
  \rho_h^{(1)} &= \rho^n_h + \tau F(\rho_h^n),\\ 
  \rho_h^{n+1} &= \frac{1}{2}\rho_h^n + \frac{1}{2}\left(\rho_h^{(1)} 
+ \tau F(\rho_h^{(1)})\right).  
  \end{align}
\end{subequations} 
For $k = 4,5$, the third order SSP-RK scheme is used
\begin{subequations}\label{eq-SSP-RK-3} 
  \begin{align}
  \rho_h^{(1)} &= \rho^n_h + \tau F(\rho_h^n),\\ 
  \rho_h^{(2)} &= \frac{3}{4} \rho_h^n + \frac{1}{4}
  \left(\rho^{(1)}_h + \tau F(\rho_h^{(1)})\right),\\ 
  \rho_h^{n+1} &= \frac{1}{3}\rho_h^n + \frac{2}{3}\left(\rho_h^{(2)} 
    + \tau F(\rho_h^{(2)})\right).  
  \end{align}
\end{subequations}

The positivity-preserving limiter should be applied immediately after each
Euler forward stage. As one can see, the SSP-RK schemes \eqref{eq-SSP-RK-2} and
\eqref{eq-SSP-RK-3} can be rewritten as convex combinations of the Euler
forward steps. Since each Euler forward step preserves the positivity, the
numerical density at the next time level will remain non-negative. 

\begin{THM} 
  Consider the SSP-RK time discretization \eqref{eq-SSP-RK-2} and
  \eqref{eq-SSP-RK-3} of the semi-discrete scheme \eqref{sch-1D}.  By applying
  limiters specified in Lemma \ref{lem-EF-scaling}, the fully discretized
  scheme preserves non-negativity as long as the time step restriction in Lemma
  \ref{lem-1D-posi-aver} is satisfied.  
\end{THM}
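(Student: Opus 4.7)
The plan is to reduce the statement to the Euler forward case handled by Theorem \ref{thm-1d-euler}, exploiting the standard convex-combination structure of SSP-RK methods. First I would write each stage of the second- and third-order schemes \eqref{eq-SSP-RK-2}--\eqref{eq-SSP-RK-3} explicitly in the form $\rho_h^{(s+1)} = \sum_{j} c_{sj}\bigl(\rho_h^{(j)} + \tau F(\rho_h^{(j)})\bigr)$, with coefficients $c_{sj}\ge 0$ satisfying $\sum_{j} c_{sj} = 1$. For example, the stage $\rho_h^{(2)} = \tfrac{3}{4}\rho_h^n + \tfrac{1}{4}(\rho_h^{(1)}+\tau F(\rho_h^{(1)}))$ in \eqref{eq-SSP-RK-3} is manifestly such a combination, and the outer stage $\rho_h^{n+1}$ is another one built on top of it.

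Next I would proceed by induction on the stage index. Assume that at stage $s$ every preceding intermediate $\rho_h^{(j)}$, $j\le s$, has non-negative values at all Gauss-Lobatto quadrature points in every cell. Apply the scaling limiter of Lemma \ref{lem-EF-scaling} immediately after each interior Euler update $\rho_h^{(j)} + \tau F(\rho_h^{(j)})$; by Theorem \ref{thm-1d-euler}, each such limited quantity is non-negative at the Gauss-Lobatto nodes provided the time step restriction from Lemma \ref{lem-1D-posi-aver} holds at the state $\rho_h^{(j)}$. Since pointwise non-negativity at the Gauss-Lobatto nodes is preserved under convex combinations, the next stage $\rho_h^{(s+1)}$ is non-negative at those nodes as well, and a final application of the limiter after the outermost Euler step closes the induction to give $\rho_h^{n+1}\ge 0$ at every quadrature point.

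The main subtlety, and the only place that requires care, is that the restriction in Lemma \ref{lem-1D-posi-aver} is \emph{state-dependent}: the bound on $\lambda_i$ involves $u_h$, $f_h$, and the boundary traces of $\rho_h$ at the current substage. Hence the common $\tau$ used throughout a single RK step must be admissible not only for $\rho_h^n$ but for each $\rho_h^{(j)}$ produced along the way. I would address this either by taking $\tau$ as the minimum of the admissible bounds over all substages, or, in the spirit of \cite{zhang2010maximum}, by observing that after the limiter the relevant nodal values controlling the bound are continuous functions of the data, so a uniform constant-factor safety margin on $\tau$ suffices. Once this admissibility is ensured, the convex-combination argument goes through without further modification and yields the claim.
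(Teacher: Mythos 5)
Your proposal is correct and follows essentially the same route as the paper: the paper likewise argues that the SSP-RK stages are convex combinations of Euler forward steps, each of which (with the limiter of Lemma \ref{lem-EF-scaling}) preserves non-negativity at the Gauss-Lobatto nodes by Theorem \ref{thm-1d-euler}, so non-negativity propagates through the convex combinations. Your explicit remark that the time-step restriction is state-dependent and must hold at every substage is a point the paper leaves implicit (it is handled in practice by the adaptive halving of $\tau$ in the algorithm flowchart), but it does not change the argument.
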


We also mention several other properties of the fully discretized scheme, whose
proofs are omitted. Such properties also hold for two dimensional cases.

\begin{enumerate} 
  \item Mass conservation: $\intR \rho_h^n(x) dx = \intR\rho_0(x) dx$.  
{
  \item Preservation of numerical steady states: if the numerical potential $\xi_h$ 
    becomes constant on each connected component of the support of $\rho_h$, 
    and $\rho_h$ vanishes everywhere else, then we have $\rho_h^{n+1} = \rho_h^n$. 
    We remind the readers that the ``preservation of steady states'' 
    here for the fully discretized scheme is slightly different from that in Theorem 
    \ref{thm:main} for the semi-discrete scheme with smooth $W$. But they are 
    related through the profiles of $\rho_h$ and $\xi_h$.}
\end{enumerate}

\subsection{Matrix formulation and implementation}\label{sec-imp}

At the end of this section, we would like to introduce the matrix formulation of
our numerical scheme and outline the flowchart of the algorithm. 

\subsubsection{Matrix formulation}

The derivation of the matrix formulation is similar to that in Section 3.1 of
\cite{chen2017entropy}.  We refer to that paper for more details. 

We omit all the subscripts $h$. Let $\{\zeta_r\}_{r=1}^{k+1}$ be the
Gauss-Lobatto quadrature points on the reference element $[-1,1]$.  We denote by
$L_r$, $r = 1,\dots,k+1$ the Lagrangian basis polynomials interpolating at these
nodes. 
\[L_r(\zeta) = \prod^{k+1}_{s=1,s\neq r}
\frac{\zeta-\zeta_s}{\zeta_r-\zeta_s}.\]
On each cell, the unknown function can be represented as 
\[{\rho}(x,t) =\sum_{r=1}^{k+1}\rho_i^r(t)L_r(\zeta^i(x)),\qquad x\in I_i.\]
Here $\zeta^i$ is the mapping from $I_i$ to $[-1,1]$.  To determine $\rho$, it
suffices to identify the coefficients $\vec{\rho}_i =
[\rho_i^1 \dots \rho_i^{k+1}]^T$. $\vec{u}_i$ and $\vec{\xi}_i$ are defined in a
similar fashion. 

The matrix formulation can be written as follows.
\begin{subequations}
  \begin{align}
    \frac{d}{dt}\vec{\rho}_i &= -\frac{2}{h_i}M^{-1}D^TM\vec{f u}_i 
    +\frac{2}{h_i}M^{-1}B\vec{fu}_i^*,\label{eq-mat-rho}\\
    \vec{u}_i &= -\frac{2}{h_i}M^{-1}D^TM\vec{\xi}_i 
    +\frac{2}{h_i}M^{-1}B\vec{\xi}_i^*,\label{eq-mat-u}\\
    \xi_i^r &= H'(\rho_i^r) + V(x_i^r)+\sum_j\rho_j^r\intj 
    W(x_i^r-y) L_r(\zeta^j(y))dy.\label{eq-mat-xi}
  \end{align}
\end{subequations}
Here $M = \diag\{w_1,\dots,w_{k+1}\}$ and $B = \diag\{-1,0,\dots,0,1\}$.
$D=\left( D_{rs}\right)$ is the difference matrix, and $D_{rs}=L_s'(\zeta_r)$.
$\vec{fu}_i$ is the component-wise product of $\vec{f}_i=[f(\rho_i^1) \dots
f(\rho_i^{k+1})]^T$ and $\vec{u}_i$. $\vec{\xi}_i^* = [\widehat{\xi}_{i-\hf}\ 0\
\dots \ 0 \ \widehat{\xi}_{i+\hf}]^T$ and $\vec{fu}_i^*$ is defined similarly for
$\widehat{fu}$. We remind the readers that one should replace $\inti$ by $\ninti$
in \eqref{eq-mat-xi} if $W$ is smooth.

\subsection{Algorithm flowchart}

For simplicity, we only consider the Euler forward time stepping. The algorithm
with SSP-RK time discretization can be implemented by repeating the following
flowchart in each stage.

\begin{enumerate}
  \item\label{1} Use \eqref{eq-mat-xi} to obtain $\{(\vec{\xi}_i)^n\}$.
  \item Evaluate the numerical flux $\{(\vec{\xi}_i^*)^n\}$ and use
    \eqref{eq-mat-u} to update $\{(\vec{u}_i)^n\}$. 
  \item Evaluate $\{(\vec{f}_i)^n\}$, $\{(\vec{fu}_i)^n\}$ and the numerical flux
    $\{(\vec{fu}_i^*)^n\}$. Use Euler forward time stepping for \eqref{eq-mat-rho}
    to calculate $(\vec{\rho}_i)^{n+1,\pre}$.
  \item Evaluate $\{\bar{\rho}_i^n\}$ in each cell.
    \begin{itemize}
      \item If $\{\bar{\rho}_i\}$ is a set of non-negative numbers. Apply the
        positivity preserving limiter to obtain $\{(\vec{\rho}_i)^{n+1}\}$ and
        enter the next time level.
      \item Otherwise halve the time step $\tau$ and restart from \ref{1}.
    \end{itemize}
\end{enumerate}

\begin{REM}
  \begin{enumerate}
    \
    \item The main advantage for using Gauss-Lobatto interpolation polynomial
      basis is that all the needed nodal values are automatically acquired.
      Hence one can save costs on evaluating the numerical fluxes and applying
      the positivity-preserving limiter. 
    \item For $W\neq 0$, the computational bottleneck is to calculate the
      convolution in step \ref{1}. This usually takes 
$\mathcal{O}(N^2)$ operations in each
      iteration.  However, on uniform meshes, the fast Fourier transform (FFT)
      can be applied to reduce the cost to 
$\mathcal{O}(N\log N)$. The idea is that, for 
      each fixed $i$, the convolution can be evaluated by,
      \begin{equation}\label{eq-convolu}
        (\vec{\xi}_i)^n =\sum_{m} K_m (\vec{\rho}_{i+m})^n, 
        \qquad (K_m)_{rs} = \int_{I_1}W\left(x_i^r
        -\left((m-1)h+y\right)\right)L_s(\zeta^1(y))dy.
      \end{equation}
      If the convolution kernel is periodic, then \eqref{eq-convolu} can be
      formulated as the multiplication of an $\left(N\times(k+1)\right)
      \times\left(N\times(k+1)\right)$ block circulant matrix and a vector. The
      FFT acceleration for such problems is standard.
      
      Although $W$ is not periodic most of the time, $\rho$ is usually a
      (numerically) compactly supported function. One only needs to evaluate
      $\xi$ precisely on the same interval. Hence we can simply extend the
      problem to a larger domain to adopt the previous procedure. For example,
      if $\rho$ lives on $[-R,R]$. We can consider its zero extension on
      $[-2R,2R]$ and assume everything to be $4R$ periodic. When the FFT
      algorithm is used to compute the matrix multiplication, it gives exact
      $\xi$ on $[-R,R]$, because relevant values of $W$ is unchanged on
      $[-2R,2R]$.  The computational complexity is still
      $\mathcal{O}\left(N\log N\right)$.
    \item In our numerical tests, both for one dimensional and two 
      dimensional cases, we will use a sufficiently small time step 
      to avoid the cell average attaining negative values. 
Also, $g(\rho) = \rho$ 
      will be used to define the numerical flux, unless otherwise stated.
  \end{enumerate}
\end{REM}

%****************************************************************************************
\section{Numerical method: two dimensional case} 
\label{sec-2d-scheme}
\setcounter{equation}{0} 
\setcounter{table}{0}
%****************************************************************************************
%                     Numerical method: two dimensional case
%****************************************************************************************

In this section, we apply our method to solve two dimensional problems on 
Cartesian meshes. 

\subsection{Semi-discrete scheme and entropy inequality} 

Consider the initial value problem,
\begin{equation*} 
  \left\{ 
    \begin{aligned} 
      &\partial_t \rho =
      \nabla\cdot\left(f(\rho)\nabla(H'(\rho)+V(x,y)+W\ast\rho)\right),\quad
      x,y\in\Omega\subset\mathbb{R}^2,\quad t>0\label{eq-general-2D},\\ 
      &\rho(x,y,0) = \rho_0(x,y).\\ 
  \end{aligned}\right.  
\end{equation*} 
Here $\Omega=I\times J$ is a rectangular domain and the periodic boundary
conditions are applied. Let $I_i\times J_j =
[x_{i-\hf},x_{i+\hf}]\times[y_{j-\hf},y_{j+\hf}]$ and
$\cup_{i=1}^{N_x}\cup_{j=1}^{N_y} I_i\times J_j$ be a partition of the mesh.
The mesh size is denoted by $h = \max_{i,j}\{\sqrt{(h^x_i)^2+(h^y_j)^2}\}$,
where $h^x_i=x_{i+\hf}-x_{i-\hf}$ and $h^y_j=y_{j+\hf}-y_{j-\hf}$. The finite
element spaces are defined as \[V_h = \{v_h:v_h\big|_{I_i\times J_j} \in
Q^k(I_i\times J_j), \text{ for all }i=1,\dots,N_x,j=1,\dots,N_y\} \text{ and }
\mathbf{V}_h = V_h\times V_h.\] Here $Q^k(I_i\times J_j)$ is the tensor
product space of $P^k(I_i)$ and $P^k(J_j)$.

The semi-discrete DG scheme is formulated as follows. One needs to find $\rho_h
\in V_h$ and $\mathbf{u}_h=(u^x_h,u^y_h)\in \mathbf{V}_h$, such that 
\begin{equation}\label{sch-2dnum-1} 
  \begin{split} 
    \niintji (\partial_t\rho_h) \vphi_h dxdy =& -\niintji f_h \left(  u^x_h
    \partial_x{\vphi_h}+u^y_h\partial_y{\vphi_h}\right) dxdy \\ 
    &+ \nintj\widehat{f u^x}_{i+\hf}\varphi_h(x_{i+\hf}^-,y)-\widehat{f
    u^x}_{i-\hf}\varphi_h(x_{i-\hf}^+,y)dy \\ 
    &+\ninti \widehat{f u^y}_{j+\hf} \vphi_h(x,y_{j+\hf}^-) -\widehat{f u^y}_{j-\hf} 
    \vphi_h(x,y_{j-\hf}^+)dx,\\
  \end{split} 
\end{equation} 
\begin{equation}\label{sch-2dnum-2} 
  \begin{split}
    \niintji {u}^x_h{\psi}^x_h+u^y_h\psi^y_h dxdy =& -\niintji \xi_h
    (\partial_x\psi^x_h+\partial_y\psi^y_h) dxdy \\ 
    & + \nintj \widehat{\xi}_{i+\hf}\psi^x_h(x_{i+\hf}^-,y) -
    \widehat{\xi}_{i-\hf}\psi_h^y(x_{i-\hf}^+,y) dy \\ 
    & +\ninti \widehat{\xi}_{j+\hf}\psi^y_h(x,y_{j+\hf}^-) -
    \widehat{\xi}_{j-\hf}\psi^y_h(x,y_{j-\hf}^+)dx.  
  \end{split} 
\end{equation}
Here, when the interaction potential $W$ is smooth, we set 
\begin{equation*}
    \xi_h = \xi_h(\rho_h,x,y) = H'_h + V + \nintJ\nintI
      W(x-\xx,y-\yy)\rho_h(\xx,\yy) d\xx d\yy.  
\end{equation*} 
While for non-smooth $W$, the quadrature may not achieve sufficient accuracy.
Hence the exact integration is applied  
\begin{equation*} 
  \xi_h = \xi_h(\rho_h,x,y) = H'_h + V + \int_J\int_I
  W(x-\xx,y-\yy)\rho_h(\xx,\yy) d\xx d\yy.  
\end{equation*} 
The numerical fluxes are chosen in the following way, 
\begin{eqnarray*}
  \widehat{\xi}_{i+\hf} =
    \widehat{\xi}_{i+\hf}(y)&=&\frac{1}{2}\left(\xi_h(x_{i+\hf}^+,y) +
    \xi(x_{i+\hf}^-,y)\right),\\ 
    \widehat{\xi}_{j+\hf} = \widehat{\xi}_{j+\hf}(x)&=&\frac{1}{2}
    \left(\xi_h(x,y_{j+\hf}^+) + \xi_h(x,y_{j+\hf}^-)\right),\\ 
    \widehat{fu^x}_{i+\hf} = \widehat{fu^x}_{i+\hf}(y) &=& \frac{1}{2}\left(
    (f_hu_h)(x_{i+\hf}^+,y) + (f_hu_h)(x_{i+\hf}^-,y)\right)+\frac{\alpha^x}{2}
    \left(g_h(x_{i+\hf}^+,y) -g_h(x_{i+\hf}^-,y)\right),\\ 
    \alpha^x_{i+\hf}=\alpha^x_{i+\hf}(y) &=& \max\{|u_h(x_{i+\hf}^+,y)|,
    |u_h(x_{i+\hf}^-,y)|\},\\ 
  \widehat{fu^y}_{j+\hf} = \widehat{fu^y}_{j+\hf}(x) &=& \frac{1}{2}\left( 
    (f_hu_h)(x,y_{j+\hf}^+) + (f_hu_h)(x,y_{j+\hf}^-) \right)+
  \frac{\alpha^y}{2}\left(g_h(x,y_{j+\hf}^+) -g_h(x,y_{j+\hf}^-)
  \right),\\ 
  \alpha^y_{j+\hf}=\alpha^y_{j+\hf}(x) &=& \max\{|u_h(x,y_{j+\hf}^+)|,
  |u_h(x,y_{j+\hf}^-)|\}, 
\end{eqnarray*} 
with $g_h(x,y) = g(\rho_h(x,y))$, where $g(\rho) = f(\rho)$ if $f$ is increasing and
$g(\rho) = C\rho$ if $\frac{f(\rho)}{\rho}\leq C$. 

For smooth $W$, one can obtain an entropy inequality as we have done for one
dimensional problems.
\begin{THM} 
For smooth interaction kernel $W$, assume that the semi-discrete 
scheme defined by \eqref{sch-2dnum-1} and \eqref{sch-2dnum-2} has a solution, then 
it satisfies the following entropy inequality.
  \begin{equation}\label{eq-2ddisc-ei} 
    \frac{d}{dt}\tilde{E} \leq -\tilde{I},
  \end{equation} 
  where 
  \begin{equation*} 
    \tilde{E} = \nintJ\nintI H(\rho_h) dxdy + \nintJ\nintI V\rho_h dxdy +
    \frac{1}{2}\nintJ\nintI\nintJ\nintI W(x-\xx,y-\yy)\rho_h(x,y)\rho_h(\xx,\yy)
    d\xx d\yy dx dy
  \end{equation*} 
  is the discrete entropy and 
  \begin{equation*} 
    \tilde{I} =  \nintJ\nintI f_h|\mathbf{u}_h|^2 dxdy  
  \end{equation*} is the associated discrete entropy dissipation.  
  {Moreover, if $\alpha^x$ and $\alpha^y$ are all positive and $\rho_h\geq 0$ is a stationary state of the semi-discrete scheme, then $\rho_h$ is continuous and the piecewise polynomial interpolation of $\xi_h$ is constant in each connected component of the support of $\rho_h$.}
\end{THM}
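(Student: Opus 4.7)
The plan is to imitate the 1D proof of Theorem \ref{thm:main} verbatim, exploiting the tensor-product structure of both the Cartesian mesh and the tensor-product Gauss-Lobatto quadrature. First, the same symmetry argument as in the 1D case reduces the time derivative of the convolution piece so that
\begin{equation*}
\frac{d}{dt}\tilde{E} = \niintR \partial_t\rho_h\,\xi_h\,dx\,dy.
\end{equation*}
Since the scheme \eqref{sch-2dnum-1} is blind to the difference between a function and its tensor-product Gauss-Lobatto interpolant on each cell, I formally plug in $\vphi_h=\xi_h$ as a test function on $I_i\times J_j$, which is legitimate at the quadrature-point level (cf.\ the 1D remark following scheme \eqref{sch-1D}).

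Second, I split the resulting volume term by direction. The $x$-part reads $\niintji \mI(f_hu^x_h)\,\partial_x\mI(\xi_h)\,dx\,dy$; on each horizontal Gauss-Lobatto line the integrand is a polynomial in $x$ of degree at most $2k-1$, so the $(k{+}1)$-point rule in $x$ is exact. I may therefore replace $\ninti$ by $\inti$, integrate by parts in $x$, and then switch back to $\ninti$ without loss; the analogous move handles the $y$-part. The combined volume contribution becomes $\niintji \xi_h\bigl(\partial_x(f_hu^x_h)+\partial_y(f_hu^y_h)\bigr)\,dx\,dy$ together with boundary contributions along the four edges of each cell.

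Third, the edge contributions combine with the numerical-flux terms already present in \eqref{sch-2dnum-1}, and invoking \eqref{sch-2dnum-2} with the ``test function'' $\boldsymbol{\psi}_h=(f_hu^x_h,f_hu^y_h)$ converts the remaining volume integral into $-\niintji f_h|\mathbf{u}_h|^2\,dx\,dy$ plus further edge terms involving the central flux $\widehat{\xi}$. After cancellations and substitution of the definitions of $\widehat{fu^x}$ and $\widehat{fu^y}$, one arrives at
\begin{equation*}
\frac{d}{dt}\tilde{E} = -\tilde{I} - \sum_{i,j}\nintj \tfrac{\alpha^x_{i+\hf}}{2}[g_h]_{i+\hf}[\xi_h]_{i+\hf}\,dy - \sum_{i,j}\ninti \tfrac{\alpha^y_{j+\hf}}{2}[g_h]_{j+\hf}[\xi_h]_{j+\hf}\,dx.
\end{equation*}
The monotonicity of $g$ and of $H'$, together with single-valuedness of $V$ and $W\ast\rho_h$, yield $\sign[g_h]=\sign[H'(\rho_h)]=\sign[\xi_h]$ across every edge, so each jump-product is non-negative and the entropy inequality follows.

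For the stationary-state claim, assuming $\alpha^x,\alpha^y>0$ and $\frac{d}{dt}\tilde{E}=0$, both the dissipation $\tilde{I}$ and each edge jump-product must vanish. The latter forces $[\rho_h]=0$ on every interior edge, hence global continuity of $\rho_h$; the former forces $\mathbf{u}_h\equiv 0$ wherever $f_h>0$, i.e.\ on the support of $\rho_h$. Substituting $\mathbf{u}_h=0$ into \eqref{sch-2dnum-2} restricted to a single cell of a connected component of the support, and using continuity of $\xi_h$ (implied by that of $\rho_h$) to identify traces with the central flux, shows that $\nabla\mI(\xi_h)=0$ cell-by-cell; continuity across edges then forces a single constant on the entire component. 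The main obstacle in this 2D extension is purely bookkeeping: one must invoke Gauss-Lobatto exactness only in the direction of integration where the integrand has degree $\le 2k-1$, and pair the four edges of each rectangle correctly with their neighbors' fluxes. Beyond that, every step is the direct 2D analog of the 1D argument.
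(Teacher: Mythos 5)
Your proposal is correct and follows essentially the same route as the paper's proof: the symmetry reduction of the convolution term, testing with the tensor-product Gauss--Lobatto interpolant of $\xi_h$, direction-by-direction exactness of the $(k{+}1)$-point rule for degree $\le 2k-1$ integrands to integrate by parts and switch back, substitution of the second equation with $\boldsymbol{\psi}_h=(f_hu^x_h,f_hu^y_h)$, and the sign argument on the edge jump products. The only difference is that you also carry out the stationary-state argument, which the paper explicitly omits as a routine two-dimensional analog of the one-dimensional case; your sketch of it is consistent with that 1D proof.
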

\begin{proof} 
  We will focus on the entropy-entropy dissipation relationship 
    and the proof of the second part of the theorem is omitted.
  Using the symmetry of $W$, we have 
  \begin{equation*} 
    \begin{split}
      &\frac{d}{dt}\frac{1}{2}\nintJ\nintI\nintJ\nintI
      W(x-\xx,y-\yy)\rho_h(x,y)\rho_h(\xx,\yy) d\xx d\yy dx dy\\ 
      =& \nintJ\nintI \partial_t \rho_h(x,y)\left( \nintJ\nintI 
        W(x-\xx,y-\yy)\rho_h(\xx,\yy)d\xx d\yy\right) dxdy.  
    \end{split} 
  \end{equation*} 
  Hence, 
  \begin{equation*}
    \begin{split} 
      \frac{d}{dt}\tilde{E} =&\nintJ\nintI\partial_t\rho_h(x,y)
      \left(H'_h + V +\nintJ\nintI W(x-\xx,y-\yy)\rho_h(\xx,\yy)d\xx d\yy \right) dxdy\\
      =&  \nintJ\nintI\partial_t\rho_h \xi_h dxdy = \sum_{i,j}\niintji\partial_t \rho_h
          \mI(\xi_h) dxdy\\ 
      =& \sum_{i,j} \left(-\nintj\left(\ninti\mI(f_hu_h^x)\partial_x\mI(\xi_h)dx\right)dy
        -\ninti\left(\nintj\mI(f_hu_h^y)\partial_y\mI(\xi_h)dy\right)dx\right. \\ 
        &+ \nintj \widehat{f u^x}_{i+\hf}\xi_h(x_{i+\hf}^-,y)-\widehat{fu^x}_{i-\hf}
          \xi_h(x_{i-\hf}^+,y) dy \\
      &\left.+ \ninti\widehat{fu^y}_{j+\hf}\xi_h(x,y_{j+\hf}^-) -
      \widehat{fu^y}_{j-\hf}\xi_h(x,y_{j-\hf}^+) dx \right).
    \end{split}
  \end{equation*} 
  For fixed $y$, $\mI(f_hu^x_h)\partial_x\mI(\xi_h)$ is a polynomial of degree
  $2k-1$ with respect to $x$. Hence the Gauss-Lobatto quadrature with $k+1$
  nodes is exact.  We replace the quadrature with the exact integral, integrate
  by parts and then change back to the quadrature. The same argument also
  applies to the second integral. One can then obtain,
  \begin{equation*} 
    \begin{split} 
      \frac{d}{dt}\tilde{E}=& \sum_{i,j}\left(\nintj\left(\ninti
    \partial_x\mI(f_hu_h^x)\mI(\xi_h)dx\right)dy+\ninti\left(\nintj
  \partial_y\mI(f_hu_h^y)\mI(\xi_h)dy\right)dx\right. \\ 
  &- \nintj (f_hu_h^x \xi_h)(x_{i+\hf}^-,y)-(f_hu_h^x\xi_h)(x_{i-\hf}^+,y) dy \\
    &- \ninti(f_hu_h^y\xi_h)(x,y_{j+\hf}^-) - (f_hu_h^y\xi_h)(x,y_{j-\hf}^+) dx\\ 
    & + \nintj\widehat{f u^x}_{i+\hf}\xi_h(x_{i+\hf}^-,y)-\widehat{fu^x}_{i-\hf}
    \xi_h(x_{i-\hf}^+,y) dy\\
    & \left. + \ninti\widehat{fu^y}_{j+\hf}\xi_h(x,y_{j+\hf}^-) -
  \widehat{fu^y}_{j-\hf}\xi_h(x,y_{j-\hf}^+) dx \right).\\
    \end{split} 
  \end{equation*} 
  Use the scheme \eqref{sch-2dnum-2} one can get
  \begin{equation*} 
    \begin{split} 
      \frac{d}{dt}\tilde{E}=& -\nintJ\nintI f|\mathbf{u}_h|^2 dxdy 
      + \sum_{i,j}\left(\nintj \widehat{\xi}_{i+\hf} (f_hu_h^x)(x_{i+\hf}^-,y)
        - \widehat{\xi}_{i-\hf}(f_hu_h^x)(x_{i-\hf}^+,y)dy\right.\\ 
        &+ \ninti \widehat{\xi}_{j+\hf}(f_hu_h^y)(x,y_{j+\hf}^-) -
      \widehat{\xi}_{j-\hf}(f_hu_h^y)(x,y_{j-\hf}^+) dx\\ 
        &- \nintj (f_hu_h^x \xi_h)(x_{i+\hf}^-,y)-(f_hu_h^x\xi_h)(x_{i-\hf}^+,y) dy \\
        &- \ninti(f_hu_h^y\xi_h)(x,y_{j+\hf}^-) - (f_hu_h^y\xi_h)(x,y_{j-\hf}^+) dx\\ 
        &+ \nintj\widehat{f u^x}_{i+\hf} \xi_h(x_{i+\hf}^-,y)-
        \widehat{fu^x}_{i-\hf}\xi_h(x_{i-\hf}^+,y) dy\\
        &\left.+ \ninti \widehat{fu^y}_{j+\hf}\xi_h(x,y_{j+\hf}^-) 
         -\widehat{fu^y}_{j-\hf}\xi_h(x,y_{j-\hf}^+) dx \right)\\ 
      =&-\nintJ\nintI f|\mathbf{u}_h|^2 dxdy\\
        &- \sum_{i,j}\left(\nintj
        \frac{\alpha^x_{i+\hf}(y)}{2}\left(g_h(x_{i+\hf}^+,y)-g_h(x_{i+\hf}^-,y)\right)
        \left(\xi_h(x_{i+\hf}^+,y)-\xi_h(x_{i+\hf}^-,y)\right)dy\right.\\
        &\left.+\ninti\frac{\alpha^y_{j+\hf}(x)}{2}\left(g_h(x,y_{j+\hf}^+)
          -g_h(x,y_{j+\hf}^-)\right)\left(\xi_h(x,y_{j+\hf}^+)
        -\xi_h(x,y_{j+\hf}^-)\right)dx\right).\\
    \end{split} 
  \end{equation*} 
  {By our choices of $g$, the strict monotonicity of $H'$ and 
  the fact that $V$ and $W$ are single-valued, 
  the last term is non-positive, which gives \eqref{eq-2ddisc-ei}.  }
\end{proof}

\subsection{Time discretization and preservation of positivity}

It suffices to ensure the positivity-preserving property of the Euler forward
scheme. The high order case is automatically taken care of by SSP-RK time
discretization.

The first step is to show that, provided the solution at the current time level
is non-negative, the cell average at next time level will also be non-negative,
if a specific time step restriction is satisfied.
\begin{LEM}\label{lem-2D-posi-aver} 
  Suppose $\rho_h^n(x_i^r,y_j^s)\geq 0$, $r,s=1,\dots,k+1$. Then when 
  \begin{subequations}\label{eq-2d-posi-aver}
    \begin{align}
      \lambda^x_i &\leq \min_{r,s}
      \left\{\left(\frac{w_1\rho}{2(fu^x+\alpha^x g)}\right)(x_{i-\hf}^+,y^s_j),
        \left(\frac{w_{k+1}\rho}{2(\alpha^x g-fu^x)}\right)(x_{i+\hf}^-,y^s_j)\right\},\\
      \lambda^y_j &\leq \min_{r,s}
        \left\{\left(\frac{w_1\rho}{2(fu^y+\alpha^y g)}\right)(x^r_i,y^+_{j-\hf}),
        \left(\frac{w_{k+1}\rho}{2(\alpha^y g-fu^y)}\right)(x^r_i,y_{j+\hf}^-)\right\},
    \end{align} 
  \end{subequations} 
  the solution $(\rho_h)_{i,j}^{n+1,\pre}$ obtained from \eqref{sch-2dnum-1} satisfies
  $\left(\bar\rho_h\right)_{i,j}^{n+1,\pre}\geq 0$. Here, in the constraint of 
  $\lambda_i^x$ and $\lambda_j^x$, we formally denote by $\frac{0}{0} := +\infty$.  
\end{LEM}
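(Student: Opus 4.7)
My plan is to reduce the two-dimensional argument to the one-dimensional Lemma~\ref{lem-1D-posi-aver} by exploiting the tensor-product structure of the Gauss-Lobatto quadrature. First I would set $\vphi_h\equiv 1$ in \eqref{sch-2dnum-1}; since $\partial_t\rho_h$ has degree at most $k$ in each variable, the tilde double integral is exact and the left-hand side reduces to $h_i^x h_j^y\,\tfrac{d}{dt}(\bar\rho_h)_{i,j}$. Applying Gauss-Lobatto in the transverse variable to each boundary integral converts the Euler-forward step into
\[
(\bar\rho_h)_{i,j}^{n+1,\pre} = (\bar\rho_h)_{i,j}^n + \frac{\lambda_i^x}{2}\sum_{s=1}^{k+1} w_s\,\Delta_x^s + \frac{\lambda_j^y}{2}\sum_{r=1}^{k+1} w_r\,\Delta_y^r,
\]
where $\Delta_x^s := \widehat{fu^x}_{i+\hf}(y_j^s)-\widehat{fu^x}_{i-\hf}(y_j^s)$ and $\Delta_y^r$ is the analogous $y$-flux difference along the column $x=x_i^r$.

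The core algebraic step is to split the cell average using two equivalent tensor-product representations,
\[
(\bar\rho_h)_{i,j}^n = \hf\sum_{s=1}^{k+1}\frac{w_s}{2}A_s + \hf\sum_{r=1}^{k+1}\frac{w_r}{2}B_r,
\]
where $A_s=\sum_r\frac{w_r}{2}\rho^n(x_i^r,y_j^s)$ and $B_r=\sum_s\frac{w_s}{2}\rho^n(x_i^r,y_j^s)$ are the one-dimensional Gauss-Lobatto row and column averages. Plugging this splitting into the update places exactly half of the cell mass with each directional flux sum, yielding
\[
(\bar\rho_h)_{i,j}^{n+1,\pre} = \hf\sum_{s}\frac{w_s}{2}\bigl[A_s+2\lambda_i^x\Delta_x^s\bigr] + \hf\sum_{r}\frac{w_r}{2}\bigl[B_r+2\lambda_j^y\Delta_y^r\bigr].
\]

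For each fixed $s$, the first bracket is exactly the one-dimensional Euler-forward update of Lemma~\ref{lem-1D-posi-aver} along the horizontal slice $y=y_j^s$, with CFL parameter $2\lambda_i^x$ and flux data $\widehat{fu^x}_{i\pm\hf}(y_j^s)$, $\alpha^x_{i\pm\hf}(y_j^s)$ that depend only on the traces $\rho_h^\pm(x_{i\pm\hf},y_j^s)$. Invoking the one-dimensional lemma slice-by-slice, that bracket is non-negative whenever $2\lambda_i^x$ satisfies the one-dimensional restriction evaluated at $y_j^s$; dividing by two is precisely the first line of \eqref{eq-2d-posi-aver}. The column-wise argument for $\lambda_j^y$ and $B_r+2\lambda_j^y\Delta_y^r$ is symmetric. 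Since $w_r,w_s\ge 0$, non-negativity of $(\bar\rho_h)_{i,j}^{n+1,\pre}$ follows, and the convention $0/0:=+\infty$ carries over verbatim from the one-dimensional statement.

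The main subtlety I expect is the clean reduction to slice-wise one-dimensional problems: it relies on the fact that in the chosen numerical fluxes the transverse variable enters only as a parameter, so that each fixed $y_j^s$ (respectively $x_i^r$) genuinely sees a one-dimensional Euler step, and on the bookkeeping that the factor of two produced by the equal splitting of the cell average between the $x$- and $y$-directions is exactly the factor of two appearing in the denominators of \eqref{eq-2d-posi-aver}. Once these two observations are in place, the proof is essentially an application of Lemma~\ref{lem-1D-posi-aver}.
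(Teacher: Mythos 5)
Your proof is correct and follows essentially the same route as the paper: both arguments rest on splitting the cell average evenly between the two coordinate directions (which is exactly the source of the factor $2$ in the denominators of \eqref{eq-2d-posi-aver}) and then running the one-dimensional positivity estimate along each Gauss--Lobatto slice with the doubled CFL number. The only difference is presentational --- you invoke Lemma \ref{lem-1D-posi-aver} slice-by-slice via the convex-combination identity for $(\bar\rho_h)_{i,j}^{n+1,\pre}$, whereas the paper re-expands the same term-by-term grouping explicitly --- and the resulting time-step restriction matches the paper's.
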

\begin{proof} As before, we drop all the subscripts $h$ in this proof. The
  superscript $n$ will also be omitted for simplicity. Take $\vphi = 1$ in
  \eqref{sch-2dnum-1}, we have 
  \begin{equation*} 
    \bar{\rho}^{n+1,\pre}_{i,j} =
    \bar{\rho}_{i,j} + \frac{\tau}{h^x_ih^y_j} \nintj\widehat{f u^x}_{i+\hf}
    -\widehat{f u^x}_{i-\hf}dy +\frac{\tau}{h^x_i h^y_j} \ninti \widehat{f
    u^y}_{j+\hf}-\widehat{f u^y}_{j-\hf} dx.
  \end{equation*} 
  Note that 
  \[\bar {\rho}_{i,j} = \frac{1}{h^x_i h^y_j}\niintji\rho dx dy
    = \frac{1}{4}\sum_{r =1}^{k+1}\sum_{s = 1}^{k+1}w_{r}w_{s} \rho(x_i^{r},y_j^{s}).\] 
  Let $\lambda^x_i= \frac{\tau}{h^x_i}$ and $\lambda^y_j = \frac{\tau}{h^y_j}$, 
  then we have
  \begin{equation*} 
    \begin{aligned} 
      &\bar{\rho}_{i,j}^{n+1,\pre} =
      \frac{1}{4}\sum_{r = 1}^{k+1}\sum_{s = 1}^{k+1}  w_rw_s \rho(x^r_i,y^s_j)\\ 
      &+ \frac{\lambda^x_i }{4}\sum_{s=1}^{k+1} w_s\left((fu^x)(x^+_{i+\hf},y^s_j)
        + (fu^x)(x^-_{i+\hf},y^s_j) + \alpha^x_{i+\hf}\left(g_h(x_{i+\hf}^+,y_j^s)
      -g_h(x_{i+\hf}^-,y_j^s)\right) \right) \\ 
      &-\frac{\lambda^x_i }{4}\sum_{s=1}^{k+1} w_s\left( (fu^x)(x^+_{i-\hf},y_j^s)+
    (fu^x)(x^-_{i-\hf},y^s_j) + \alpha^x_{i-\hf}
    \left(g_h(x_{i-\hf}^+,y_j^s)-g_h(x_{i-\hf}^-,y_j^s)\right) \right) \\ 
    &+\frac{\lambda^y_j }{4}\sum_{r=1}^{k+1} w_r\left( (fu^y)(x^r_i,y_{j+\hf}^+)+
    (fu^y)(x^r_i,y^-_{j+\hf}) + \alpha^y_{j+\hf}
    \left(g_h(x^r_i,y_{j+\hf}^+)-g_h(x^r_i,y_{j+\hf}^-)\right) \right) \\ 
    &-\frac{\lambda^y_j }{4}\sum_{r=1}^{k+1} w_r\left( (fu^y)(x^r_i,y^+_{j-\hf})+
    (fu^y)(x^r_i,y^-_{j-\hf}) + \alpha^y_{j-\hf}
  \left(g_h(x_i^r,y_{j-\hf}^+)-g_h(x_i^r,y_{j-\hf}^-)\right) \right) \\ 
    & \geq\frac{1}{4}\sum_{r = 2}^k\sum_{s = 2}^k w_rw_s\rho(x^r_i,y_j^s)\\ 
    &+ \sum_{s =1}^{k+1}w_s\left(\frac{w_{k+1}}{8}\rho(x^-_{i+\hf},y_j^s) +
  \frac{\lambda^x_i}{4}\left( (fu^x)(x^-_{i+\hf},y^s_j) -\alpha^x_{i+\hf}
  g_h(x_{i+\hf}^-,y_j^s) \right) \right)\\ 
    &+ \sum_{s=1}^{k+1}w_s\left(\frac{w_1}{8}\rho(x^+_{i-\hf},y_j^s)- 
    \frac{\lambda^x_i }{4}\left((fu^x)(x^+_{i-\hf},y_j^s) + \alpha^x_{i-\hf}
    g_h(x_{i-\hf}^+,y_j^s)\right)\right)\\ 
    &+\sum_{r=1}^{k+1}w_r\left(\frac{w_{k+1}}{8}\rho(x_i^r,y^-_{j+\hf}) +
    \frac{\lambda^y_j}{4}\left((fu^y)(x^r_i,y^-_{j+\hf}) -
    \alpha^y_{j+\hf}g_h(x^r_i,y_{j+\hf}^-) \right)\right) \\ 
    &+\sum_{r=1}^{k+1} w_r\left(\frac{w_1}{8}\rho(x^r_i,y^+_{j-\hf}) 
      - \frac{\lambda^y_j }{4}\left((fu^y)(x^r_i,y^+_{j-\hf})
        + \alpha^y_{j-\hf}g_h(x_i^r,y_{j-\hf}^+)\right)\right)\\
    &+ \frac{\lambda^x_i }{4}\sum_{s=1}^{k+1}w_s\left( (fu^x)(x^+_{i+\hf},y^s_j)+ 
      \alpha^x_{i+\hf}g_h(x_{i+\hf}^+,y_j^s)\right)\\
      &-\frac{\lambda^x_i}{4}\sum_{s=1}^{k+1}
      w_s\left((fu^x)(x^-_{i-\hf},y^s_j)-\alpha^x_{i-\hf} g_h(x_{i-\hf}^-,y_j^s)\right) \\
    &+ \frac{\lambda^y_j }{4}\sum_{r=1}^{k+1} w_r\left((fu^y)(x^r_i,y_{j+\hf}^+)
    + \alpha^y_{j+\hf} g_h(x^r_i,y_{j+\hf}^+)\right) \\
    &-\frac{\lambda^y_j }{4}\sum_{r=1}^{k+1} w_r\left( (fu^y)(x^r_i,y^-_{j-\hf}) -
    \alpha^y_{j-\hf}g_h(x_i^r,y_{j-\hf}^-) \right). \\ 
    \end{aligned} 
  \end{equation*}

The first term is automatically non-negative, since the weights $w_r,w_s\geq 0$
and the nodal values $\rho(x_i^r,y_j^s)\geq 0$. The positivity of the last four
terms is guaranteed by our choice of $\alpha$ and $g$. One only needs
\eqref{eq-2d-posi-aver} to ensure the second and the third term to be
non-negative. And as before, one can check the convention $\frac{0}{0}=+\infty$
does make sense. Hence $\bar \rho^{n+1}_{i,j} \geq 0$ under the prescribed time
step constraint. 
\end{proof}

Then, as we have done in the one dimensional case, a scaling limiter is applied
to sure the numerical polynomial solution takes non-negative values at the
quadrature points. Hence the assumption in Lemma \ref{lem-2D-posi-aver} is met
and the fully discretized scheme is positivity-preserving.
\begin{THM} 
  Let 
  \[\rho_h^{n+1}(x_i^r,y_j^s) = (\bar \rho_h)_{i,j}^{n+1,\pre}
    +\theta_{i,j}\left(\rho_h^{n+1,\pre}(x_i^r,y_j^s)- (\bar \rho_h)_{i,j}^{n+1,\pre}\right),
    \qquad\forall r,s = 1,\dots,k+1,\] 
    with $\theta_{i,j}=\min\{\frac{ (\bar \rho_h)_{i,j}^{n+1,\pre} }
      {(\bar \rho_h)_{i,j}^{n+1,\pre}-m_{i,j}},1\}$ and $m_{i,j} 
      = \min\{\rho_h^{n+1,\pre}(x_i^r,y_j^s)\}_{r,s =1}^{k+1}$. 
      Then we have $\rho_h^{n+1}(x_i^r,y_j^s)\geq0,\forall r,s =
    1,\dots,k+1$, $\bar{\rho}_h^{n+1} = \bar{\rho}_h^{n+1,\pre}$. Hence the 
    resulting fully discretized scheme using Euler forward or SSP-RK time 
    discretization preserves the non-negativity of the solution, if
    \begin{equation*} 
      \begin{split}
        \tau\leq \min_{i,j}\{h^x_i,h^y_j\}\cdot \min_{i,j,s,r}
      \left\{
        \left(\frac{w_1\rho}{2(fu^x+\alpha^x g)}\right)(x_{i-\hf}^+,y^s_j),
        \left(\frac{w_{k+1}\rho}{2(\alpha^x g-fu^x)}\right)(x^-_{i+\hf},y^s_j)\right.\\
        \left.\left(\frac{w_1\rho}{2(fu^y+\alpha^yg)}\right)(x^r_i,y^+_{j-\hf}),
        \left(\frac{w_{k+1}\rho}{2(\alpha^y g-fu^y)}\right)(x^r_i,y_{j+\hf}^-)
      \right\}.
      \end{split} 
    \end{equation*} 
\end{THM}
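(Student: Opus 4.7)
The plan is to show this is a direct two-dimensional analogue of the one-dimensional argument (Lemma \ref{lem-EF-scaling} together with Theorem \ref{thm-1d-euler}), built on top of the cell-average positivity result already established as Lemma \ref{lem-2D-posi-aver}. The workflow has three logical steps: first, verify the pointwise and mean-preserving properties of the tensor-product Gauss--Lobatto scaling limiter; second, close the induction for a single Euler forward step; third, lift the result to the SSP-RK stages via convexity.

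First I would check that the limiter is well-posed and does what the statement asserts. Because $(\bar\rho_h)_{i,j}^{n+1,\pre}\geq 0$ by Lemma \ref{lem-2D-posi-aver} under the stated CFL bound, and $m_{i,j}\leq (\bar\rho_h)_{i,j}^{n+1,\pre}$, the quantity $\theta_{i,j}\in[0,1]$ is well defined. The nodal lower bound
\[
\rho_h^{n+1}(x_i^r,y_j^s) \geq (\bar\rho_h)_{i,j}^{n+1,\pre} + \theta_{i,j}\bigl(m_{i,j}-(\bar\rho_h)_{i,j}^{n+1,\pre}\bigr) = 0
\]
follows directly from the definition of $\theta_{i,j}$ (it is exactly the largest rescaling for which the inequality holds). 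For mean preservation I would use the fact that the tensor-product Gauss--Lobatto rule with $(k{+}1)^2$ nodes integrates every polynomial in $Q^k(I_i\times J_j)$ exactly, so
\[
(\bar\rho_h)_{i,j}^{n+1} = \tfrac{1}{4}\sum_{r,s}w_rw_s\,\rho_h^{n+1}(x_i^r,y_j^s) = (\bar\rho_h)_{i,j}^{n+1,\pre} + \theta_{i,j}\Bigl(\tfrac{1}{4}\sum_{r,s}w_rw_s\rho_h^{n+1,\pre}(x_i^r,y_j^s) - (\bar\rho_h)_{i,j}^{n+1,\pre}\Bigr),
\]
and the parenthesized quantity is zero because the same quadrature identity applied to $\rho_h^{n+1,\pre}$ already yields $(\bar\rho_h)_{i,j}^{n+1,\pre}$.

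Next I would close the Euler forward iteration. Assume inductively that $\rho_h^n(x_i^r,y_j^s)\geq 0$ at every Gauss--Lobatto node. The stated global bound on $\tau$ is the intersection of $\lambda_i^x\leq(\cdots)$ and $\lambda_j^y\leq(\cdots)$ from \eqref{eq-2d-posi-aver}, so Lemma \ref{lem-2D-posi-aver} gives $(\bar\rho_h)_{i,j}^{n+1,\pre}\geq 0$. The limiter step just verified then produces $\rho_h^{n+1}$ with non-negative nodal values and the same cell average, so the inductive hypothesis is restored at time $t^{n+1}$. For the SSP-RK schemes \eqref{eq-SSP-RK-2} and \eqref{eq-SSP-RK-3}, each intermediate stage is an Euler forward step, and the combinations are convex with non-negative weights summing to one; applying the limiter after every Euler stage keeps each partial update non-negative at the nodes, and convex combinations of non-negative nodal vectors remain non-negative (their cell averages are the same convex combination of non-negative averages), which yields the full result.

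The main technical point, and the only place some care is needed, is matching the Euler-forward CFL constraints in \eqref{eq-2d-posi-aver} to the single scalar bound $\tau\leq\min_{i,j}\{h_i^x,h_j^y\}\cdot\min_{i,j,r,s}\{\cdots\}$ stated in the theorem: one must verify that this bound simultaneously controls both $\lambda_i^x$ and $\lambda_j^y$ at every interface, and that the convention $0/0=+\infty$ is interpreted consistently across the four minima. A secondary subtlety is that Lemma \ref{lem-2D-posi-aver} is stated under the hypothesis that the \emph{input} solution is non-negative at the Gauss--Lobatto nodes, whereas for the SSP-RK stages one needs the intermediate limited solutions $\rho_h^{(1)}, \rho_h^{(2)}$ to satisfy that hypothesis; the limiter property established in the first step is precisely what closes this loop.
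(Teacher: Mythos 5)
Your proposal is correct and follows essentially the same route the paper takes (the paper omits the detailed proof, appealing to the one-dimensional argument plus Lemma \ref{lem-2D-posi-aver}): verify the nodal non-negativity and mean preservation of the tensor-product Gauss--Lobatto scaling limiter, invoke Lemma \ref{lem-2D-posi-aver} for the cell average under the stated time-step bound, and propagate through the SSP-RK stages by convexity. The only cosmetic slip is that your displayed lower bound equals $m_{i,j}\geq 0$ rather than $0$ when $\theta_{i,j}=1$, but the inequality $\rho_h^{n+1}(x_i^r,y_j^s)\geq 0$ holds in both cases, so the argument is sound.
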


%****************************************************************************************
\section{One dimensional numerical tests} \label{sec-num-result-1d}
\setcounter{equation}{0} 
\setcounter{figure}{0} 
\setcounter{table}{0}
%****************************************************************************************
%                               One dimensional numerical tests
%****************************************************************************************
\subsection{Accuracy tests} 
In this part, we examine the accuracy of the numerical schemes with $P^1$, $P^2$, 
$P^3$ and $P^4$ elements. The error is measured in the discrete norms.  
\begin{subequations} 
  \begin{align*} 
    e_{L^1} &= \sum_i\ninti \left|u_h(x,t) - u(x,t)\right| dx,\\ 
    e_{L^2} &= \sqrt{\sum_i\ninti \left|u_h(x,t) - u(x,t)\right|^2 dx},\\ 
    e_{L^\infty} &= \max_{x\in \{x_i^r\}_{i,r}}\left|u_h(x,t) - u(x,t)\right|.  
  \end{align*} 
\end{subequations}
%-----------------------------------------------------------------------------------------
\begin{examp}[advection equation]\label{examp:adv} 
  The first numerical test is done for the linear advection equation 
  \begin{equation*} 
    \left\{
      \begin{aligned} 
        \partial_t \rho &= \partial_{x}\rho ,\quad x\in[-\pi,\pi],\\ 
        \rho(x,0) &= 1 + \sin(x).  
      \end{aligned}
    \right.
  \end{equation*} 
  The problem has an exact solution $u(x,t) = 1+\sin(x+t)$.  In this test,
  $f(\rho) = \rho$, $H'(\rho) = 0$, $V(x) = x$ and $W(x) = 0$.  To be consistent
  at the boundaries, one needs to manually impose $\widehat{\xi}(\pi) = \pi$ and
  $\widehat{\xi}(-\pi) = -\pi$. (This will gives $u \equiv 1$ and the scheme is
  equivalent to the usual upwinding DG method with a \emph{mass lumping}
  treatment.) We compute up to $t = 2$ and the time step is $\tau = 0.02h^2$.

  Due to our choice of the initial condition, the solution has point vacuum and
  the numerical solution may become negative in its neighborhood. We perform
  numerical tests without and with the positivity-preserving limiter and the
  results are listed in Table \ref{tab:adv_nolimiter} and Table
  \ref{tab:adv_limiter} respectively. As one can see, without the limiter, the
  convergence rate is optimal. The rate degenerates a little bit for 
the $P^4$ scheme when one applies the limiter.

\begin{table}[h!] 
  \centering 
  \begin{tabular}{c|c|c|c|c|c|c|c} 
    \hline   
    k &N&$L^1$ error& order&$L^2$ error& order&$L^\infty$ error& order\\
    \hline
    1&20 &0.155489         &  -      & 0.689292E-01  &     -  &0.416916E-01 & - \\ 
     &40 &0.403867E-01     &  1.94   & 0.179328E-01  &  1.94  &0.106198E-01 & 1.97  \\
     &80 &0.102281E-01     &  1.98   & 0.453598E-02  &  1.98  &0.265698E-02 & 2.00  \\ 
     &160&0.256686E-02     &  1.99   & 0.113801E-02  &  1.99  &0.663269E-03 & 2.00  \\ 
     \hline
    2&20 &0.183679E-02     &  -      & 0.104224E-02  &     -  &0.124147E-02&  -    \\  
     &40 &0.222515E-03     &  3.05   & 0.130558E-03  &  3.00  &0.158800E-03&  2.97 \\  
     &80 &0.273812E-04     &  3.02   & 0.163282E-04  &  3.00  &0.200245E-04&  2.99 \\  
     &160&0.339363E-05     &  3.01   & 0.204129E-05  &  3.00  &0.251331E-05&  2.99 \\  
     \hline
    3&20 &0.299466E-04     &  -      & 0.176257E-04  &     -  &0.270453E-04&  -    \\   
     &40 &0.187719E-05     &  4.00   & 0.110354E-05  &  4.00  &0.170007E-05&  3.99 \\   
     &80 &0.117691E-06     &  4.00   & 0.689895E-07  &  4.00  &0.106066E-06&  4.00 \\   
     &160&0.736323E-08     &  4.00   & 0.431213E-08  &  4.00  &0.662179E-08&  4.00 \\   
     \hline
    4&20 &0.450982E-06     &  -      & 0.252754E-06  &     -  &0.429430E-06&  -    \\   
     &40 &0.133008E-07     &  5.08   & 0.798519E-08  &  4.98  &0.143225E-07&  4.91 \\   
     &80 &0.415333E-09     &  5.00   & 0.246970E-09  &  5.01  &0.444279E-09&  5.01 \\   
     &160&0.129717E-10     &  5.00   & 0.771945E-11  &  5.00  &0.138960E-10&  5.00 \\   
     \hline
  \end{tabular} 
  \caption{Accuracy test of the linear advection equation in Example \ref{examp:adv}: 
  without limiters.} 
  \label{tab:adv_nolimiter} 
\end{table}
\begin{table}[h!] 
  \centering 
  \begin{tabular}{c|c|c|c|c|c|c|c} 
    \hline   
    k &N&$L^1$ error& order&$L^2$ error& order&$L^\infty$ error& order\\ \hline
    1&20 & 0.149399      & -     &  0.667930E-01  & -    & 0.433314E-01  & -\\ 
     &40 & 0.400851E-01  & 1.90  &  0.181381E-01  & 1.88 & 0.136753E-01  &1.66   \\ 
     &80 & 0.104653E-01  & 1.94  &  0.473425E-02  & 1.94 & 0.514818E-02  & 1.41   \\ 
     &160& 0.268565E-02  & 1.96  &  0.122685E-02  & 1.95 & 0.176080E-02  & 1.55   \\ 
    \hline 
    2&20 & 0.183523E-02  & -     &  0.104831E-02  & -    & 0.124144E-02  & -      \\ 
     &40 & 0.223090E-03  & 3.04  &  0.130674E-03  & 3.00 & 0.158800E-03  & 2.97   \\ 
     &80 & 0.274294E-04  & 3.02  &  0.163317E-04  & 3.00 & 0.200245E-04  & 2.99   \\ 
     &160& 0.339506E-05  & 3.01  &  0.204138E-05  & 3.00 & 0.251331E-05  & 2.99   \\ 
    \hline 
    3&20 & 0.313613E-04  & -     &  0.188466E-04  & -    & 0.359345E-04  & -      \\
     &40 & 0.199045E-05  & 3.98  &  0.117327E-05  & 4.01 & 0.182708E-05  & 4.30\\ 
     &80 & 0.121686E-06  & 4.03  &  0.719577E-07  & 4.02 & 0.162878E-06  & 3.49   \\ 
     &160& 0.759071E-08  & 4.00  &  0.446279E-08  & 4.01 & 0.945333E-08  & 4.11   \\ 
    \hline 
    4&20 & 0.166111E-05  & -     &  0.160456E-05  & -    & 0.285453E-05  & -      \\ 
     &40 & 0.583064E-07  & 4.83  &  0.758033E-07  & 4.40 & 0.204428E-06  & 3.80   \\ 
     &80 & 0.199636E-08  & 4.87  &  0.359090E-08  & 4.40 & 0.138462E-07  & 3.88   \\ 
     &160& 0.707240E-10  & 4.82  &  0.170196E-09  & 4.40 & 0.903761E-09  & 3.94   \\ 
    \hline 
  \end{tabular} 
  \caption{Accuracy test of the linear advection equation in Example \ref{examp:adv}: 
  with limiters.}
  \label{tab:adv_limiter} 
  \end{table} 
\end{examp}
%---------------------------------------------------------------------------------------------
\begin{examp}[heat equation]\label{examp:heat} 
  We then examine the heat equation, 
  \begin{equation*} 
    \left\{ 
      \begin{aligned} 
        \partial_t \rho &= \partial_{xx}\rho ,\quad x\in [-\pi,\pi],\\ 
        \rho(x,0) &= 2 + \sin(x),
      \end{aligned}\right.  
  \end{equation*} 
  with periodic boundary conditions. The exact solution to the problem is 
  $\rho(x,t) = 2 + e^{-t}\sin(x)$. The decomposition of the equation into the 
  desired form is not unique. Let us consider two test cases, \\ 
  (i) $f(\rho) = \rho$, $H'(\rho) = \log(\rho)$ and $V(x) = W(x) = 0$, \\ 
  (ii) $f(\rho) = \sqrt{\rho}$, $H'(\rho) = 2\sqrt{\rho}$ and $V(x) = W(x) = 0$. \\ 
  Note that for both of the cases, the schemes are nonlinear, although the
  original problem is linear. We use the time step $\tau = 0.01h^2$ 
to compute
  to $t = 2$.  Error tables are given in Table \ref{tab:heat_i} and Table
  \ref{tab:heat_ii} respectively. According to our numerical results, we see
  that different choices of decomposition lead to negligible difference. For
  both of the tests, $P^2$ and $P^4$ schemes are of the optimal rate of
  convergence, but the order for $P^1$ and $P^3$ schemes seems 
to be reduced.

\begin{table}[h!] 
  \centering 
  \begin{tabular}{c|c|c|c|c|c|c|c} 
    \hline   
    k&N&$L^1$ error& order&$L^2$ error& order&$L^\infty$ error& order\\ 
    \hline 
    1&20 & 0.795669E-02&  -   & 0.369447E-02  &-      & 0.228808E-02& -     \\
     &40 & 0.200183E-02&  1.99& 0.988037E-03  &1.90   & 0.664459E-03& 1.78  \\
     &80 & 0.552074E-03&  1.86& 0.283256E-03  &1.80   & 0.202063E-03& 1.72  \\
     &160& 0.172193E-03&  1.68& 0.855650E-04  &1.73   & 0.622412E-04& 1.70  \\
     &320& 0.538010E-04&  1.68& 0.259228E-04  &1.72   & 0.187767E-04& 1.73  \\
    \hline 
    2&20 & 0.153364E-03&  -   & 0.935049E-04  &-     & 0.901032E-04& - \\  
     &40 & 0.167874E-04&  3.19& 0.113109E-04  &3.05  & 0.110833E-04& 3.02\\  
     &80 & 0.195595E-05&  3.10& 0.140286E-05  &3.01  & 0.138186E-05& 3.00\\  
     &160& 0.235834E-06&  3.05& 0.175062E-06  &3.00  & 0.172667E-06& 3.00\\  
     &320& 0.289492E-07&  3.02& 0.218767E-07  &3.00  & 0.215818E-07& 3.00\\  
    \hline 
    3&20 & 0.162173E-04&  -   & 0.780319E-05  &-     & 0.789576E-05& -     \\  
     &40 & 0.180537E-05&  3.17& 0.867447E-06  &3.17  & 0.877086E-06& 3.17  \\  
     &80 & 0.185055E-06&  3.29& 0.892168E-07  &3.28  & 0.909961E-07& 3.27  \\  
     &160& 0.171294E-07&  3.43& 0.833148E-08  &3.42  & 0.865799E-08& 3.39  \\  
     &320& 0.142478E-08&  3.59& 0.705413E-09  &3.56  & 0.756372E-09& 3.52  \\  
    \hline 
    4&20 & 0.357641E-07&  -   & 0.237294E-07  &-     & 0.410404E-07& -     \\  
     &40 & 0.104036E-08&  5.10& 0.720811E-09  &5.04  & 0.125451E-08& 5.03  \\  
     &80 & 0.315216E-10&  5.04& 0.223737E-10  &5.01  & 0.390045E-10& 5.01  \\  
     &160& 0.971067E-12&  5.02& 0.698093E-12  &5.00  & 0.121750E-11& 5.00  \\  
     &320& 0.301387E-13&  5.01& 0.218084E-13  &5.00  & 0.380404E-13& 5.00  \\  
    \hline 
  \end{tabular} 
  \caption{Accuracy test of the heat equation in Example \ref{examp:heat}: (i) 
    $\partial_t\rho = \partial_x\left(\rho\partial_x\log(\rho)\right)$.} 
    \label{tab:heat_i}
\end{table} 
\begin{table}[h!] 
  \centering 
  \begin{tabular}{c|c|c|c|c|c|c|c} 
    \hline
    k  &N&$L^1$ error& order&$L^2$ error& order&$L^\infty$ error& order\\ 
    \hline
    1 &20 &   0.842713E-02 &  -    &  0.378282E-02 & -    & 0.223942E-02  &-\\ 
      &40 &   0.210694E-02 &  2.01 &  0.967781E-03 & 1.97 & 0.606923E-03&1.88\\ 
      &80 &   0.531435E-03 &  2.00 &  0.258302E-03 & 1.92 & 0.174263E-03&1.80\\ 
      &160&   0.143005E-03 &  1.89 &  0.733990E-04 & 1.83 & 0.522090E-04&1.74\\ 
      &320&   0.439029E-04 &  1.70 &  0.219496E-04 & 1.74 & 0.159240E-04&1.71\\ 
    \hline 
    2 &20 &   0.157192E-03 &  -    &  0.939409E-04 & -    &0.892857E-04  &-   \\ 
      &40 &   0.169943E-04 &  3.21 &  0.113196E-04 & 3.05 &0.110079E-04  &3.02\\ 
      &80 &   0.197008E-05 &  3.11 &  0.140217E-05 & 3.01 &0.136949E-05  &3.01\\ 
      &160&   0.236877E-06 &  3.06 &  0.174896E-06 & 3.00 &0.171085E-06  &3.00\\ 
      &320&   0.290358E-07 &  3.03 &  0.218519E-07 & 3.00 &0.213811E-07  &3.00\\ 
    \hline 
    3 &20 &    0.174043E-04 &  -    &  0.830168E-05& -    & 0.809730E-05  &-   \\ 
      &40 &    0.204183E-05 &  3.09 &  0.970636E-06& 3.10 & 0.953802E-06  &3.09\\ 
      &80 &    0.226482E-06 &  3.17 &  0.107547E-06& 3.17 & 0.105447E-06  &3.18\\ 
      &160&    0.231941E-07 &  3.29 &  0.110068E-07& 3.29 & 0.107849E-07  &3.29\\ 
      &320&    0.214673E-08 &  3.43 &  0.101820E-08& 3.43 & 0.998098E-09  &3.43\\ 
    \hline 
    4 &20 &    0.352777E-07 &  -    &0.218885E-07 & -    & 0.337766E-07  &-   \\ 
      &40 &    0.103272E-08 &  5.09 &0.668083E-09 & 5.03 & 0.105025E-08  &5.01\\ 
      &80 &    0.313767E-10 &  5.04 &0.207580E-10 & 5.01 & 0.326896E-10  &5.01\\ 
      &160&    0.967915E-12 &  5.02 &0.647801E-12 & 5.00 & 0.102174E-11  &5.00\\ 
      &320&    0.300611E-13 &  5.01 &0.202376E-13 & 5.00 & 0.319234E-13  &5.00\\ 
    \hline 
  \end{tabular}
  \caption{Accuracy test of the heat equation in Example \ref{examp:heat}: (ii)
  $\partial_t\rho = \partial_x\left(\sqrt{\rho}\partial_x(2\sqrt{\rho})\right)$.}
  \label{tab:heat_ii} 
  \end{table} 
\end{examp} 
%----------------------------------------------------------------------------------
\begin{examp}[evolution equation with interaction potentials]\label{examp:kernel}
  Our final tests are designed for problems with interaction potentials.  
  \begin{equation}\label{eq-kernel} 
    \left\{
      \begin{aligned} 
        \partial_t\rho &= \partial_x\left(\rho\partial_x (W\ast
        \rho)\right),\quad x\in[-1,1],\\ 
        \rho(x,0) &= \left(\frac{e^{-\frac{x^2}{0.1}}}{\sqrt{0.1\pi}}\right)^4.
      \end{aligned}\right.  
  \end{equation} 
  Periodic boundary conditions are applied for the problem. We consider both the
  smooth case $W(x) =0.2\frac{e^{-\frac{x^2}{0.1}}}{\sqrt{0.1\pi}}$ and the
  nonsmooth case $W(x) = \max\{0.2-|x|,0\}$. The convolution integrals are
  evaluated by quadrature and exact integration respectively. We compute to $t =
  0.2$ with $\tau = 0.4h^2$ and use the numerical solution with $P^4$ elements
  on $N = 1280$ mesh as the reference solution to evaluate the accuracy. We do
  not impose the limiter in the tests. The order of accuracy seems to be
  optimal for odd $k$, while the order degenerates for even $k$. See Table 
  \ref{tab:kernel_smooth} and Table \ref{tab:nonsmooth-kernel}.

\begin{table}[h!] 
  \centering 
  \begin{tabular}{c|c|c|c|c|c|c|c} 
    \hline k  &N&$L^1$ error& order&$L^2$ error& order&$L^\infty$ error& order\\ 
    \hline 
    1&40 & 0.859828E-01 &    -  &  0.987370E-01 &   -    & 0.194420    &    -  \\ 
     &80 & 0.286579E-01 &  1.59 &  0.355164E-01 & 1.48   & 0.771407E-01&  1.33 \\
     &160& 0.812899E-02 &  1.82 &  0.106417E-01 & 1.74   & 0.243226E-01&  1.67 \\
     &320& 0.212984E-02 &  1.93 &  0.284231E-02 & 1.90   & 0.686847E-02&  1.82 \\
     &640& 0.539424E-03 &  1.98 &  0.725347E-03 & 1.97   & 0.178994E-02&  1.94 \\
    \hline 
    2&40 & 0.261541E-01 &    -  &  0.689950E-01 &   -    & 0.458242    &  -  \\ 
     &80 & 0.403687E-02 &  2.70 &  0.130854E-01 & 2.40   & 0.122350    &  1.91 \\   
     &160& 0.668047E-03 &  2.60 &  0.236858E-02 & 2.47   & 0.310993E-01&  1.98 \\   
     &320& 0.127115E-03 &  2.39 &  0.426574E-03 & 2.47   & 0.780721E-02&  1.99 \\   
    % &640& 0.271767E-04 &  2.23 &  0.776973E-04 & 2.46   & 0.195379E-02&2.00 \\   
    \hline 
    3&40 & 0.767936E-03 &    -  & 0.125522E-02 &   -     & 0.100354E-01&    -  \\ 
     &80 & 0.448571E-04 &  4.10 & 0.671569E-04 & 4.22    & 0.649938E-03&  3.95 \\   
     &160& 0.245043E-05 &  4.19 & 0.357006E-05 & 4.23    & 0.409350E-04&  3.99 \\   
     &320& 0.131245E-06 &  4.22 & 0.193372E-06 & 4.21    & 0.250925E-05&  4.03 \\   
     &640& 0.725657E-08 &  4.18 & 0.106902E-07 & 4.18    & 0.101879E-06&  4.62 \\   
    \hline 
    4&40 & 0.826595E-04 &    -  &  0.236938E-03 &   -    & 0.283191E-02&    -  \\ 
     &80 & 0.301748E-05 &  4.78 &  0.114572E-04 & 4.37   & 0.195543E-03&  3.86 \\
     &160& 0.110995E-06 &  4.76 &  0.518909E-06 & 4.46   & 0.125786E-04&  3.96 \\
     &320& 0.433154E-08 &  4.68 &  0.240724E-07 & 4.43   & 0.846605E-06&  3.89 \\   
%  &640& 0.227755E-09 &  4.25 &  0.229180E-08 & 3.39   & 0.115532E-06&  2.87 \\   
    \hline 
  \end{tabular} 
  \caption{Accuracy test for Example \ref{examp:kernel} with a smooth 
  kernel $W(x) =0.2\frac{e^{-\frac{x^2}{0.1}}}{\sqrt{0.1\pi}}$.} 
  \label{tab:kernel_smooth} 
\end{table}
\begin{table}[h!] 
  \centering 
  \begin{tabular}{c|c|c|c|c|c|c|c} 
    \hline 
    k  &N&$L^1$error& order&$L^2$ error& order&$L^\infty$ error& order\\ 
    \hline 
    1&40 &0.107219     &    - &     0.129220&   - &0.276379    &- \\ 
     &80 &0.379424E-01 & 1.50 & 0.509462E-01&1.34 &0.146170    &0.92\\ 
     &160&0.113287E-01 & 1.74 & 0.167052E-01&1.61 &0.529961E-01&1.46\\ 
     &320&0.306399E-02 & 1.89 & 0.480518E-02&1.80 &0.161279E-01&1.72\\ 
     &640&0.780911E-03 & 1.97 & 0.126728E-02&1.92 &0.446252E-02&1.85\\ 
    \hline
    2&40 &0.277861E-01 &    - & 0.363441E-01&    - & 0.135178&   -\\ 
     &80 &0.506578E-02 & 2.46 & 0.703702E-02& 2.37 & 0.327120E-01&2.05\\ 
     &160&0.113483E-02 & 2.16 & 0.161949E-02& 2.12 & 0.791563E-02&2.05\\ 
     &320&0.277402E-03 & 2.03 & 0.408541E-03& 1.99 & 0.220603E-02&1.84\\ 
     &640&0.701655E-04 & 1.98 & 0.104865E-03& 1.96 & 0.582488E-03&1.92\\ 
     \hline
    3&40 &0.183849E-02 &    - & 0.347290E-02&    - & 0.240838E-01& - \\ 
     &80 &0.181121E-03 & 3.34 & 0.373509E-03& 3.22 & 0.365025E-02& 2.72\\ 
     &160&0.108761E-04 & 4.06 & 0.233375E-04& 4.00 & 0.254088E-03& 3.84\\
     &320&0.681396E-06 & 4.00 & 0.152720E-05& 3.93 & 0.185908E-04& 3.77\\
%   &640&     0.480419E-07 & 3.83 &     0.128642E-06&       3.57 &
%   0.317485E-05&       2.55\\
    \hline                                                                  
    4&40&  0.385095E-03 &    - &0.815323E-03&    - & 0.793063E-02&-\\ 
     &80 & 0.171694E-04 & 4.49 &0.244340E-04& 5.06 & 0.130440E-03& 5.93\\ 
     &160& 0.100754E-05 & 4.09 &0.169036E-05& 3.85 & 0.136589E-04& 3.26\\ 
     &320& 0.585433E-07 & 4.11 &0.133120E-06& 3.67 & 0.151993E-05& 3.17\\
%   &640&     0.892632E-08 & 2.71 &     0.411147E-07&       1.70 &
%   0.744989E-06&       1.03\\
    \hline 
 \end{tabular} 
 \caption{Accuracy test for Example \ref{examp:kernel} with a nonsmooth 
 kernel $W(x) = \max\{0.2-|x|,0\}$.} 
 \label{tab:nonsmooth-kernel} 
 \end{table}
\subsection{Fokker-Planck type equations}
%--------------------------------------------------------------------------------------
\begin{examp}[porous media equation] 
  Let us consider the porous media equation
  \[\partial_t\rho =\partial_{x}\left(\rho\partial_x\left(\frac{m}{m-1}\rho^{m-1}
  +\frac{x^2}{2}\right)\right),\]
  which is used to model the flow of a gas through a porous interface. The
  equation fits our model with $H(\rho) = \frac{1}{m-1}\rho^m$ and $V(x) =
  \frac{x^2}{2}$. The property of the equation is studied by Carrillo and
  Toscani in \cite{carrillo2000asymptotic} using an entropy approach. They
  have proved that the equation converges to a unique steady state given by a
  Barenblatt-Pattle type formula,
  \[\rho_\infty(x) = \left(C - \frac{m-1}{2m}|x|^2\right)^{\frac{1}{m-1}}.\] 
  Here the constant $C$ is determined by ensuring the mass conservation.
  Furthermore, the relative entropy $E(t|\infty) = E(\rho(t)) - E(\rho_\infty)$ decays
  exponentially, $E(t|\infty) \leq E(0|\infty)e^{-2t}$ and the rate $-2$ is
  sharp.

  We particularly choose $m=2$ in our numerical test.
  \begin{equation*}\label{porous_ivp} 
    \left\{ 
      \begin{aligned} 
        \partial_t\rho &= \partial_x(\rho\partial_x(2\rho + \frac{x^2}{2})),
        \quad x\in [-2,2],\\
        \rho(x,0) &= \max\{1-|x|,0\},
      \end{aligned}\right.  
  \end{equation*} 
  with periodic boundary conditions.  The stationary solution is 
  \[\rho_\infty = \max\left\{\left(\frac{3}{8}\right)^{\frac{2}{3}} - \frac{x^2}{4},
  0\right\}.\]
  We compute up to $t= 5$ with the number of cells $N = 40$ and the time step 
  $\tau = 0.005h^2$.
  The positivity preserving limiter keeps being invoked in the test. (If we
  manually turn off the limiter, the solution may blow up.) The profiles of the
  solution polynomials with $k = 2$ and $k = 3$ are given in Figure
  \ref{fig:porous_profile}. As one can see, the numerical solutions converge
  well to the exact steady state in the smooth region. We also provide a
  zoomed-in snapshot to exhibit the capture of singularity near $x =
  3^{\frac{1}{3}}$ in Figure \ref{fig:porous_singular}. 

  In Figure \ref{fig:porous_entropy_sym_abs} and Figure \ref{fig:porous_entropy}, we plot the entropy and the relative 
  entropy respectively. The entropy profiles for $k=2$ and $k=3$ are 
  almost identical, and they both approach to zero. 
  We then evaluate the relative entropy using that of the numerical steady state 
  as a reference. But we can only plot up to a certain time, before
the relative entropy becomes slightly negative, since the entropy 
  decay of the semi-discrete scheme may not be preserved after 
applying the 
  time discretization and the limiter.  We stop the plotting
after negative relative entropy appears, not only because it 
  can not be depicted in the logarithm scale, but also because
the unresolved tail should be 
  regarded as a discretization error. If we choose $N = 320$ while keeping 
  $\tau = 0.005h^2$, the decay will continue further.

  \begin{figure}[h!] 
    \centering
    \begin{subfigure}[h]{0.45\textwidth}
      \centering
      \includegraphics[width=\textwidth]{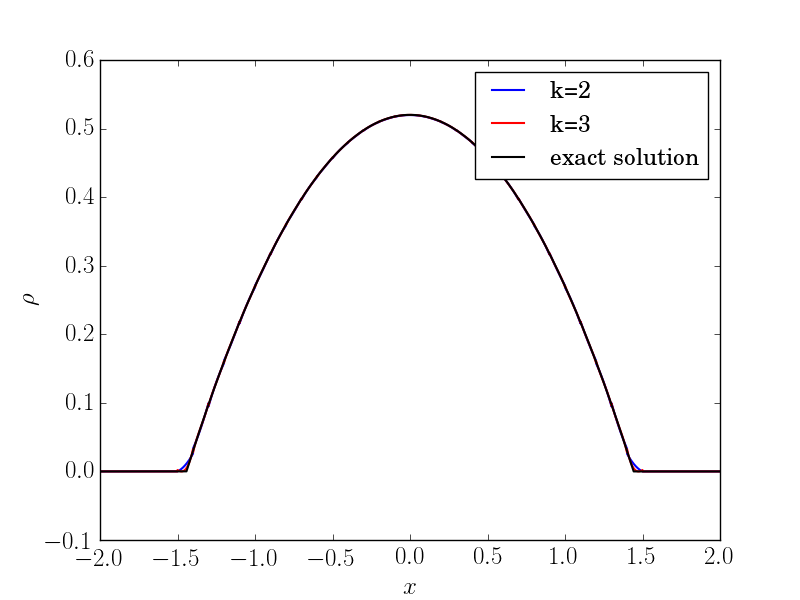} 
      \caption{Steady state.}\label{fig:porous_profile}
    \end{subfigure}
    ~
    \begin{subfigure}[h]{0.45\textwidth}
      \centering
      \includegraphics[width=\textwidth]{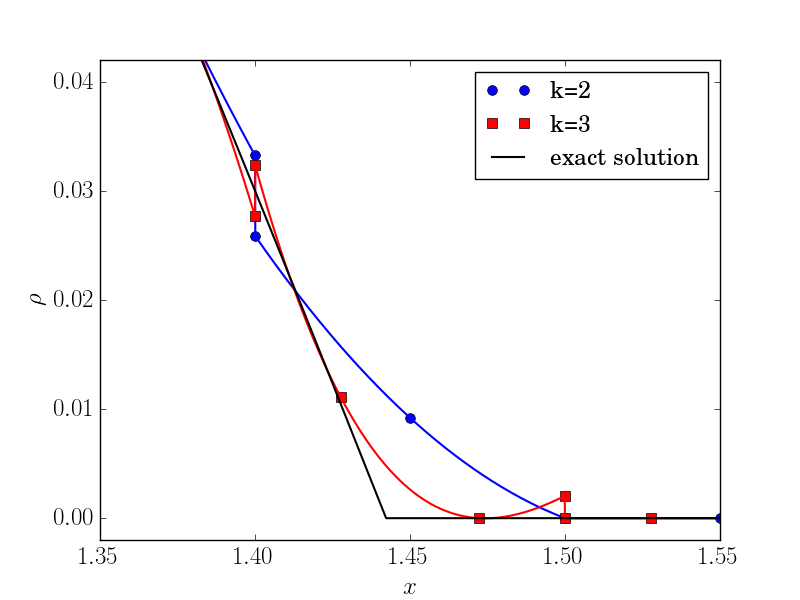} 
      \caption{Zoomed in figure near $x = 3^{\frac{1}{3}}$.}\label{fig:porous_singular}
    \end{subfigure}
    \caption{Profiles of the numerical steady states for \eqref{porous_ivp}, with
    $\rho(x,0) = \max\{1-|x|,0\}$.} 
  \end{figure} 
  \begin{figure}[h!] 
    \centering
    \begin{subfigure}[h]{0.45\textwidth}
      \centering
      \includegraphics[width=\textwidth]{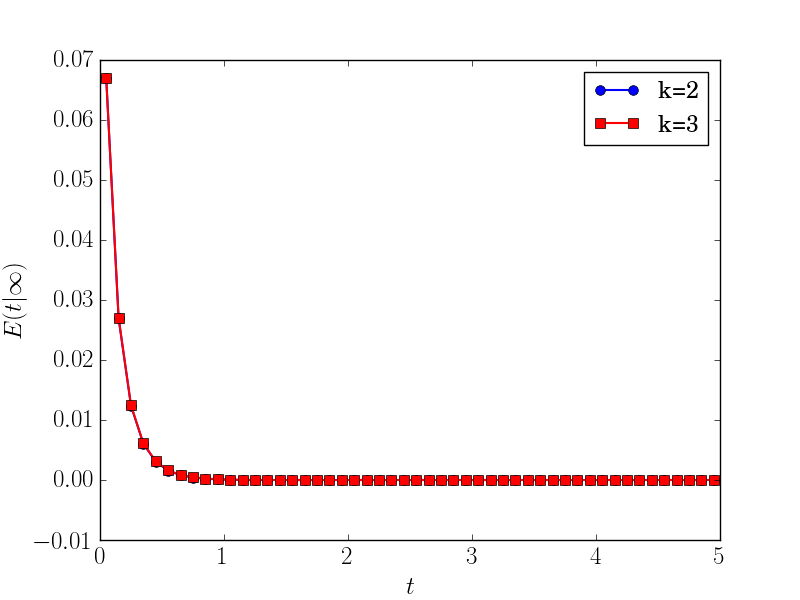} 
      \caption{{$\rho(x,0) =\max\{1-|x|,0\}$.}\label{fig:porous_entropy_sym_abs} }
    \end{subfigure}
    ~
    \begin{subfigure}[h]{0.45\textwidth}
      \centering
      \includegraphics[width=\textwidth]{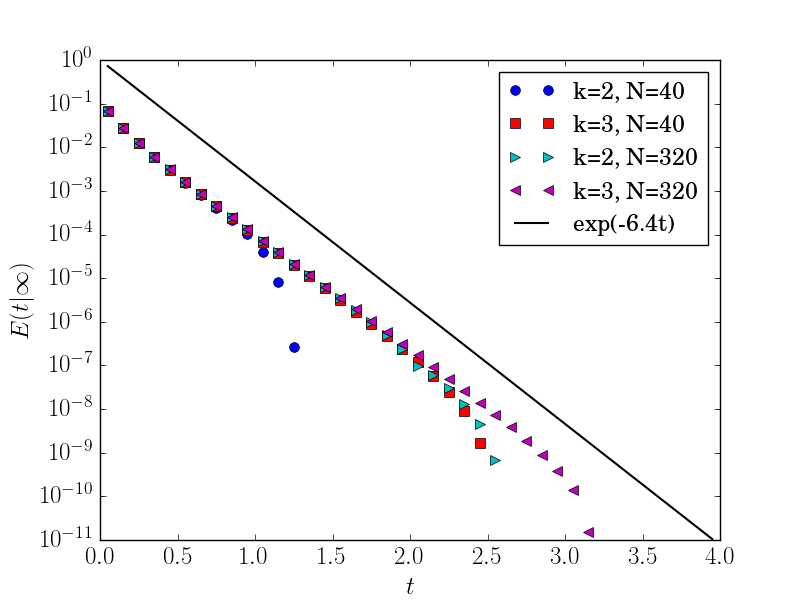} 
      \caption{{$\rho(x,0) =\max\{1-|x|,0\}$.}\label{fig:porous_entropy} }
    \end{subfigure}
    \caption{The entropy and the relative entropy for \eqref{porous_ivp}, with
    $\rho(x,0) = \max\{1-|x|,0\}$.} 
  \end{figure}

  \begin{figure}[h]
      \centering
      \includegraphics[width=0.5\textwidth]{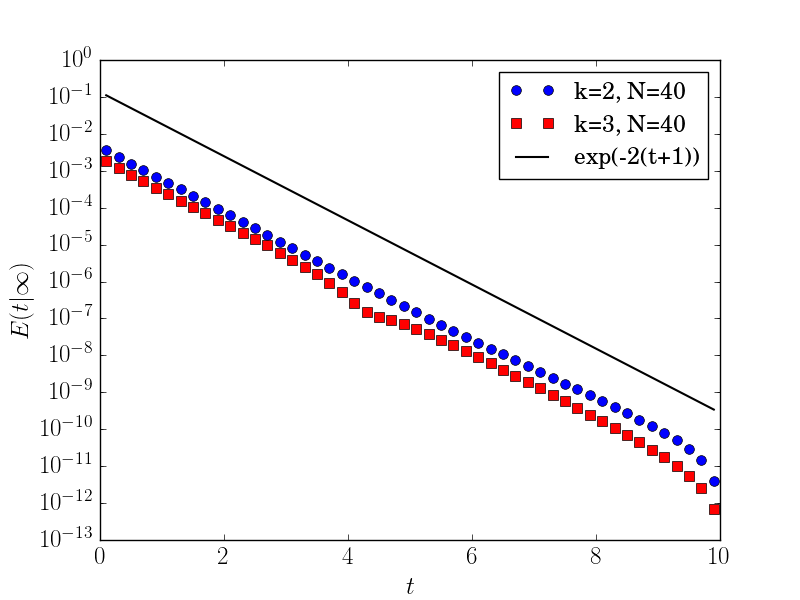}
      \caption{The relative entropy for porous equation, with $\rho(x,0) = 
      \max\{1-|x-\frac{1}{2}|,0\}$.}\label{fig:nonsym_porous_entropy}
  \end{figure}

  For the symmetric initial condition, our numerical tests indicate the decay rates 
  is around $e^{-6.7t}$. Indeed, symmetric initial data converge faster to equilibrium than the sharp rate since they preserve the invariance of the center mass, see \cite{CDT} for more details.
  We then test the problem under the same settings, except for the initial condition 
  shifted to the right $\rho(x,0) = \max\{1-|x-\frac{1}{2}|,0\}$ and the final time set 
  to $t=10$. The corresponding plot of $E(t|\infty)$ is given in Figure 
  \ref{fig:nonsym_porous_entropy} with the exponential decay rate $-2$, which
  coincides with the result in \cite{carrillo2000asymptotic}. Similar numerical
  test can be found in \cite{bessemoulin2012finite}.  
  
\end{examp}
%----------------------------------------------------------------------------------------
\begin{examp}[Fokker-Planck equation] 
  In this numerical test, we consider the Fokker-Planck equation for modeling the 
  relaxation of fermion and boson gases. The equation takes the form 
  \[\partial_t \rho = \partial_x\left(x\rho\left(1+\kappa\rho\right)+\partial_x
  \rho\right).\] 
  Here $\kappa = 1$ corresponds to boson gases and  $\kappa = -1$ relates to fermion
  gases. The long time asymptotics of the one dimensional model has been studied
  in \cite{carrillo2008nonlinear} for $0\leq\rho\leq 1$. The authors point out
  the equation evolves to a steady state $\rho_\infty(x) = \frac{1}{\beta
  e^{\frac{x^2}{2}}-\kappa}$. The stationary solution minimizes the entropy
  functional 
  \[E = \int\left(\frac{|x|^2}{2}\rho + \rho \log(\rho) - \kappa
  (1+\kappa\rho)\log(1+\kappa\rho)\right) dx.\] 
  The relative entropy decays at an exponential rate $E(t|\infty) \leq
  E(0|\infty) e^{-2Ct}$, with $C = 1$ for the boson case and $0<C<1$ for 
the fermion
  case.  In our numerical test, we study the same entropy functional and set
  $f(\rho) = \rho(1+\kappa\rho)$, $H'(\rho) =
  \log\left(\frac{\rho}{1+\kappa\rho}\right)$, $V = \frac{x^2}{2}$ in our
  numerical scheme. The limiter is turned on in the computation. The initial
  condition is chosen as $\rho(x,0) = \frac{1}{0.4\pi}e^{-\frac{(x-1)^2}{0.4}}$.
  We compute on the domain $[-10,10]$ with $100$ mesh cells, and march towards
  the steady state with $\tau = 0.0002h^2$. The flux constant is set to $2$. 
  
  For both of the test cases, we use numerical steady states as references to 
  calculate the relative entropy. The
  result for the boson case is given in Figure \ref{fig:boson_entropy}. As one can
  see, the decay rate is around $-2.6$. While for the fermion case, which is
  exhibited in Figure \ref{fig:fermi_entropy}, the relative entropy decays at a
  slower rate of $-1.44$. 
  \begin{figure}[h!] 
    \centering 
    \begin{subfigure}[h]{0.45\textwidth}
      \centering
      \includegraphics[width=\textwidth]{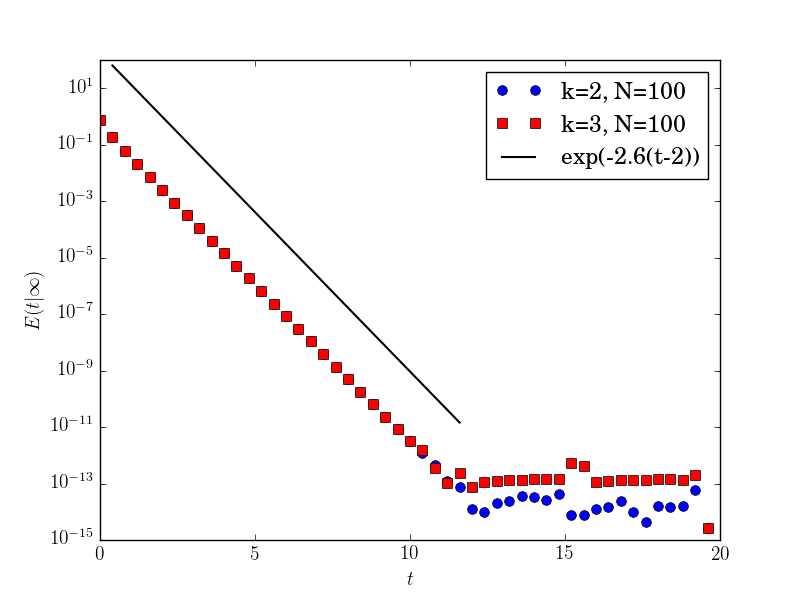}
      \caption{Bose gas.}\label{fig:boson_entropy} 
    \end{subfigure}
    ~
    \begin{subfigure}[h]{0.45\textwidth}
      \centering
    \includegraphics[width=\textwidth]{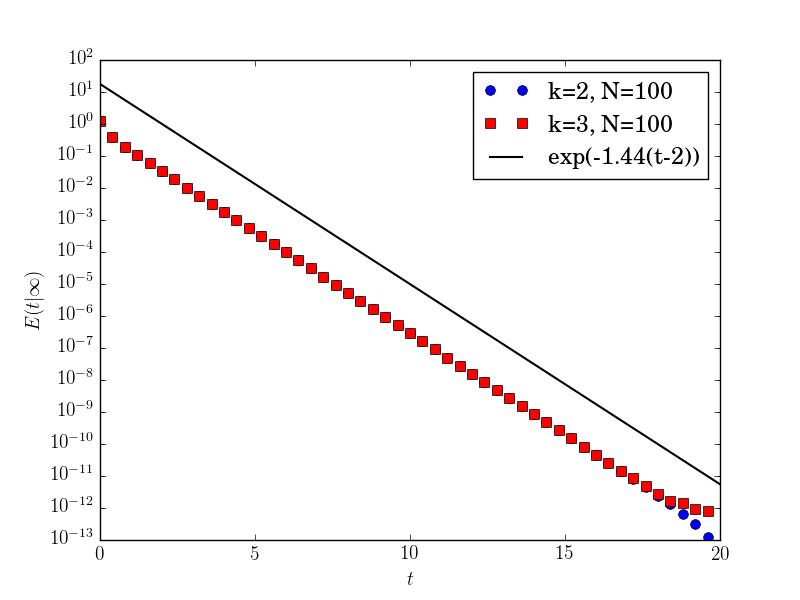}
    \caption{Fermi gas.}\label{fig:fermi_entropy}
    \end{subfigure}
    \caption{Decay of the relative entropy for Fokker-Planck equation.} 
  \end{figure}
\end{examp}
%------------------------------------------------------------------------------------
\begin{examp}[generalized Fokker-Planck equation for the boson gas] 
  Let us now consider the generalized Fokker-Planck equation with linear diffusion and
  superlinear drift 
  \begin{equation*} 
    \partial_t\rho = \partial_x\left(x\rho(1+\rho^N)+\partial_x \rho\right), 
  \end{equation*} 
  with $N$ being a positive constant. For $N>2$, it is reported in
  \cite{abdallah2011minimization} that a critical mass phenomenon exists for
  one dimensional problems.  An initial distribution with supercritical
  mass will evolve a singularity at the origin, which has been confirmed
  numerically in \cite{bessemoulin2012finite} and \cite{liu2016entropy}.  In
  this test, we repeat the numerical experiment in
  \cite{bessemoulin2012finite} and \cite{liu2016entropy}, setting 
  \[f(\rho) = \rho\left(1+\rho^3\right), H'(\rho) = \log\frac{\rho}{\sqrt[3]{1+\rho^3}}
  \text{ and } V(x) = \frac{x^2}{2}.\] 
  The initial datum is chosen as 
  \[\rho(x,0) = \frac{M}{2\sqrt{2\pi}}\left(e^{-\frac{(x-2)^2}{2}} + e^{-\frac{(x+2)^2}{2}}
  \right).\] 
  We test with both the subcritical case with $M= 1$ and the supercritical case with 
  $M = 10$. The $P^4$ elements are used in our numerical scheme and we compute on 
  the domain $[-6,6]$ with $N = 120$. For $M = 1$, the time step is chosen as 
  $\tau = 0.003h^2$ and for $M = 10$, it is $\tau = 0.0005h^2$. And we use 
  $g(\rho) = f(\rho)$ when defining the numerical flux. According to the 
  numerical results in Figure \ref{fig:GFP}, our scheme does capture the 
  asymptotics of the equation.
  \begin{figure}[h!] 
    \centering 
    \begin{subfigure}[h]{0.45\textwidth}
      \centering
    \includegraphics[width=\textwidth]{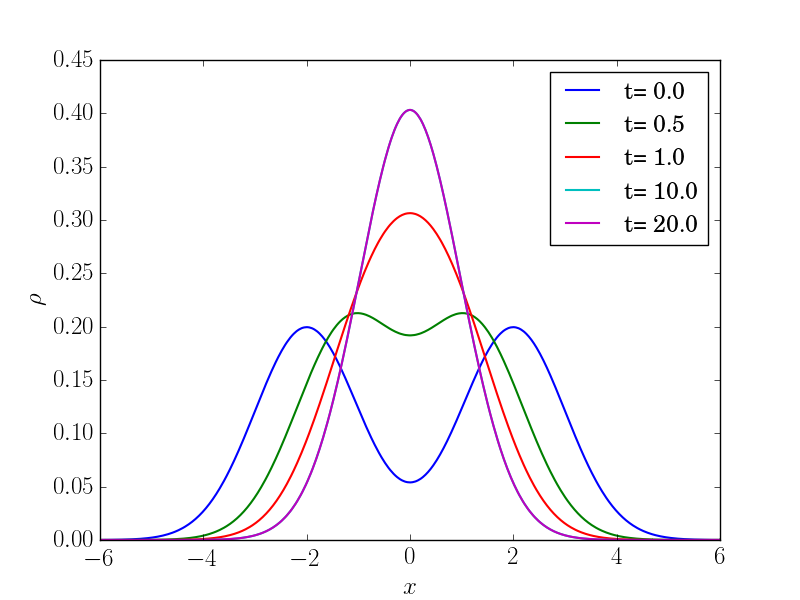}
    \caption{{$M=1$.}\label{fig:GFP_subcritical} }
    \end{subfigure}
     ~
    \begin{subfigure}[h]{0.45\textwidth}
      \centering
      \includegraphics[width=\textwidth]{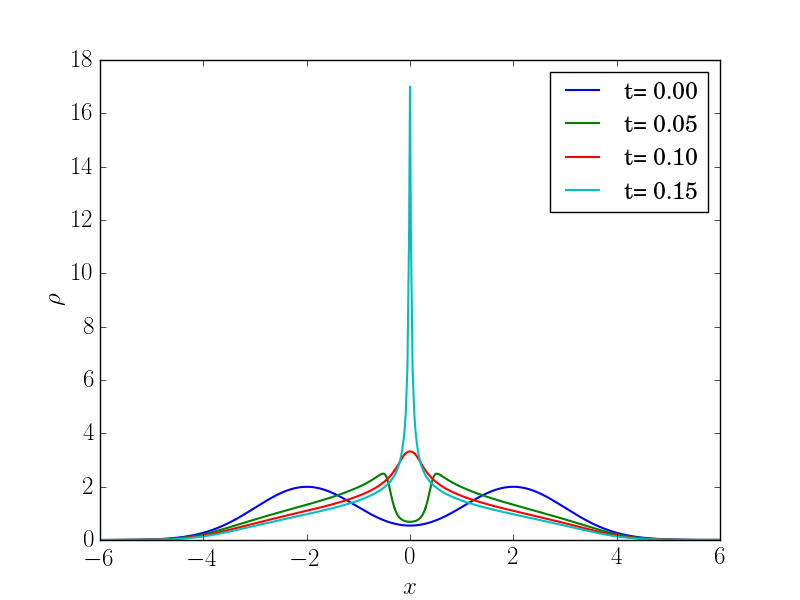}
      \caption{{$M=10$.}\label{fig:GFP_supercritical} }
    \end{subfigure}
    \caption{Evolution of $\rho$ of subcritical mass $M=1$ and supercritical mass
    $M=10$.}\label{fig:GFP} 
  \end{figure}
\end{examp}

\subsection{Aggregation models}
%------------------------------------------------------------------------------------
\begin{examp}[nonlinear diffusion with smooth attraction kernel] 
  This numerical test is to study the dynamics of the equation with competing 
  nonlinear diffusion and smooth nonlocal attraction, 
  \[\partial_t\rho = \partial_x\left(\rho\partial_x\left(\nu\rho^{m-1} + 
  W\ast\rho\right)\right).\]
  Here $0\leq\rho\leq1$. $\nu>0$ and $m>1$ are parameters to be specified. The
  convolution kernel $W$ in this example is nonlocal and smooth. Under this
  setting, the attraction effect is weak and the solution would either end up
  with a steady state or spread out in the whole domain with bounded initial
  data. The compactly supported steady state is of special interest, due to its
  application for modeling the biological aggregation, such as flocks and
  swarms. Indeed, such stationary solution can be reached for $m>2$ with
  arbitrary $\nu$. While for $1<m\leq2$, the long time behavior of the solution
  can be sophisticated. We refer to \cite{burger2014stationary} and
  \cite{burger2013stationary} for details.  For our numerical test, we focus on
  the specific setting, 
  \begin{equation*} 
    \left\{ 
      \begin{aligned} 
        \partial_t\rho &= \partial_x(\rho\partial_x(\nu\rho^{m-1} + W\ast\rho)), 
        W(x) = -\frac{1}{\sqrt{2\pi}}e^{-\frac{x^2}{2}},x\in [-6,6],\\ 
        \rho(x,0) &= \frac{1}{2\sqrt{2\pi}}\left(e^{-\frac{(x-\frac{5}{2})^2}{2}}
        +e^{-\frac{(x+\frac{5}{2})^2}{2}}\right).
      \end{aligned}
    \right.  
  \end{equation*} 
  We apply periodic boundary conditions and use a mesh with $N = 120$ for 
  computation. The $P^2$ scheme is used for the numerical test, and
the time step is 
  chosen as $\tau = 0.05h^2$. In Figure \ref{fig:nonlocal_smooth_steady}, we depict 
  the numerical solution at $T=1800$ with $m=1.5$, $\nu=0.33$, $m=2$, $\nu=0.48$ and 
  $m=3$, $\nu=1.48$, which are used as the reference steady states when evaluating the 
  relative entropy.

  According to the plot, one can see that a larger $m$ corresponds to a steady 
  state with a sharper transition along the boundary of the support. Indeed, one 
  should expect the H\"{o}lder continuity with the exponent $\alpha=\min\{1,1/(m-1)\}$. 

  \begin{figure}[h!] 
    \centering
    \includegraphics[width=0.5\textwidth]{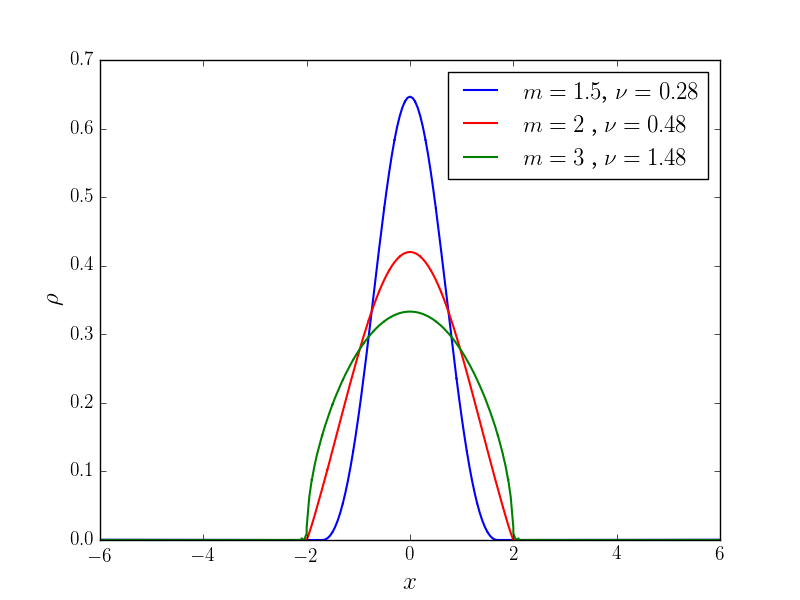}
    \caption{Solution profile at $T=1800$.} \label{fig:nonlocal_smooth_steady}
  \end{figure} 
  \begin{figure}[h!] 
    \centering 
    \begin{subfigure}[h]{0.45\textwidth}
      \centering
    \includegraphics[width=\textwidth]{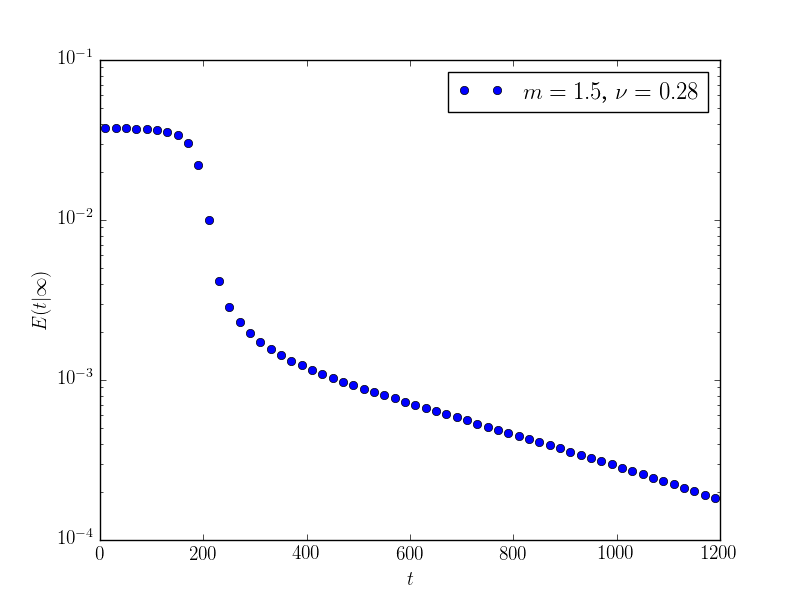}
    \caption{Relative entropy.}\label{fig:log_entropy_nonlocal_smooth_m1d5}
    \end{subfigure}
    ~ 
    \begin{subfigure}[h]{0.45\textwidth}
    \centering
    \includegraphics[width=\textwidth]{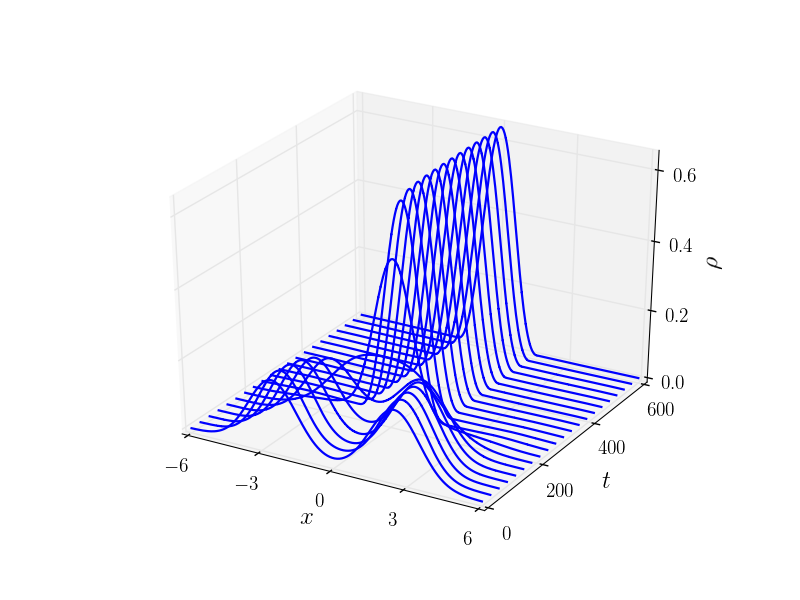}
    \caption{{Evolution.}\label{fig:nonlocal_smooth_m1d5_0d28} }
    \end{subfigure}
    \caption{$m = 1.5$, $\nu = 0.28.$}
  \end{figure} 
  \end{examp} 
  We track the solution profile and the relative entropy. For $m=1.5$, as one can 
  see from Figure \ref{fig:log_entropy_nonlocal_smooth_m1d5}, the dynamics of the 
  problem distinguishes from the test cases for Fokker-Planck type equations. The 
  relative entropy decays slowly at first, then it follows with a steep 
  drop at a certain time. After that, the relative entropy decays exponentially. The
  behavior can be explained with Figure \ref{fig:nonlocal_smooth_m1d5_0d28}. At
  the beginning, the two bumps of the initial condition stay away from each
  other, their interaction is weak hence the equation evolves at a slow rate. When 
  they get closer, the attraction becomes strong.  A sudden decay of the relative 
  entropy occurs when the two bumps merge. After that, the contribution of the 
  interaction potential to the total energy becomes small. The equation is again 
  dominated by the diffusion term, and the relative entropy decays exponentially 
  as we have seen before. 

  We omit the plots for $m=2$. And for $m=3$, the diffusion is relatively weak and 
  the exponential decay after the steep drop is hard to observe. Hence we only plot 
  the entropy in the normal scale. But still we can see the sharp drop when the bumps 
  merge in Figure \ref{fig:nonlocal_smooth_m3}.

  The initial stage featured with the weak long-range-interaction is referred
  as metastability. If multiple bumps exist, the relative entropy can decay in
  a staircase fashion. For example, we test the problem with $m=6$, $\nu=6$ and
  $\rho(x,0)= \frac{3}{20}\max\{1-|x-\frac{19}{4}|,0\}+
  \frac{1}{4}\max\{1-|x-2|,0\}+ \frac{1}{5}\max\{1-|x+\frac{17}{40}|,0\}+
  \frac{2}{5}\max\{1-|x+\frac{15}{4}|,0\}$.  The relative entropy and dynamics
  are given in Figure \ref{fig:nonlocal_step}.
  
  \begin{figure}[h!] 
    \centering 
    \begin{subfigure}[h]{0.45\textwidth}
    \centering
    \includegraphics[width=\textwidth]{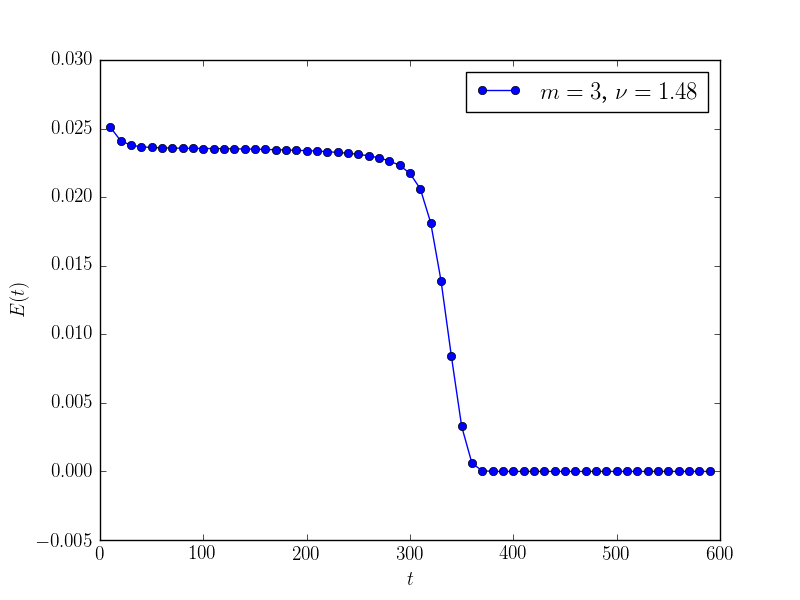}
    \caption{{Entropy.}\label{fig:entropy_nonlocal_smooth_m3}}
    \end{subfigure}
    ~ 
    \begin{subfigure}[h]{0.45\textwidth}  
    \centering
    \includegraphics[width=\textwidth]{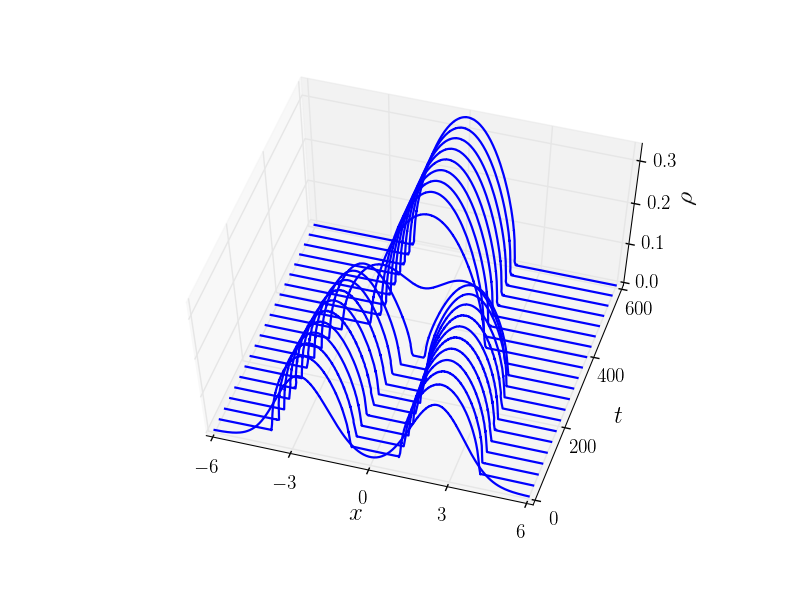}
    \caption{{Evolution.}\label{fig:nonlocal_smooth_m3_1d48} }
    \end{subfigure}
    \caption{$m = 3$, $\nu = 1.48$.}\label{fig:nonlocal_smooth_m3}
  \end{figure} 
  \begin{figure}[h!] 
    \centering
    \begin{subfigure}[h]{0.45\textwidth}
    \centering
      \includegraphics[width=\textwidth]{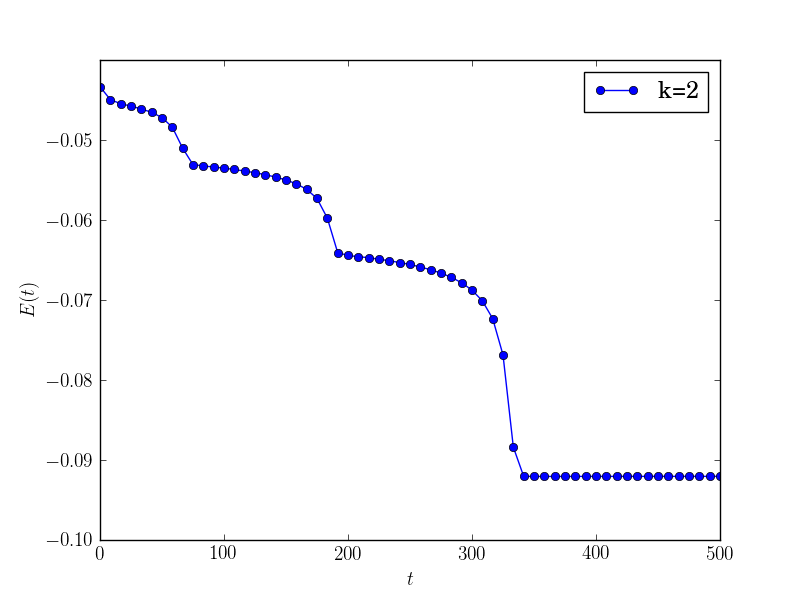}
      \caption{Entropy.}\label{fig:entropy_nonlocal_step} 
    \end{subfigure}
    ~ 
    \begin{subfigure}[h]{0.45\textwidth}
    \centering
    \includegraphics[width=\textwidth]{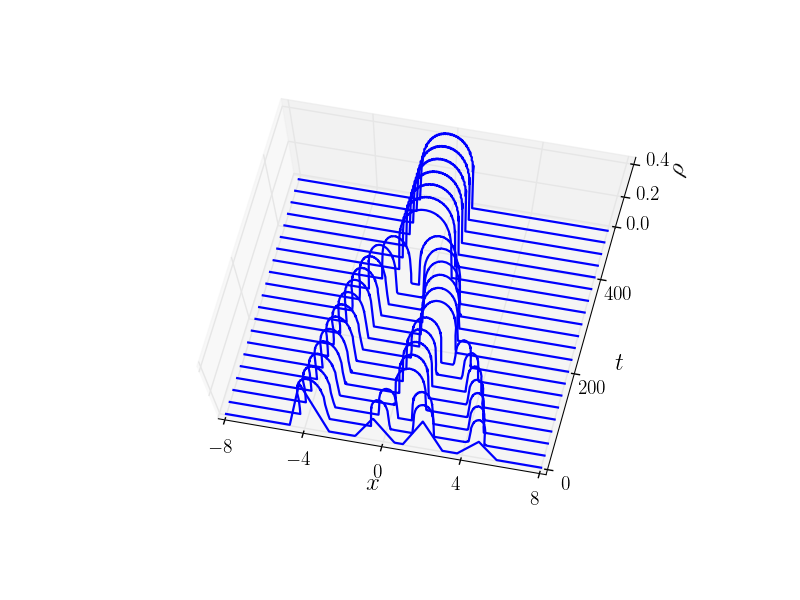}
    \caption{Evolution.} 
    \end{subfigure}
    \caption{Stepwise decay: $m = 6$, $\nu = 6$.}\label{fig:nonlocal_step} 
    \end{figure} 
\end{examp} 
  %------------------------------------------------------------------------------
\begin{examp}[nonlinear diffusion with compactly supported attraction kernel] 
  In the previous test, the attraction effect is global and the steady state will 
  be connected for one dimensional problems. But when $W$ is local, the connectivity 
  of the equilibrium can be affected by the initial mass distribution. Let us consider 
  the following problem
  \begin{equation}\label{eq-nonsmooth-local-kernel} 
    \left\{ 
      \begin{aligned}
        \partial_t\rho &= \partial_x\left(\rho\partial_x\left(\frac{1}{4}\rho^2 +
        W\ast \rho\right)\right),\quad W = -\max\{1-|x|,0\},\quad x\in [-4,4],\\
        \rho(x,0) &=\chi_{[-a,a]}(x), 
      \end{aligned}\right.  
    \end{equation} 
  with periodic boundary conditions. We compute with $k =2$ with the number of cells 
  $N = 80$. 

  \begin{figure}[h!] 
    \centering 
    \begin{subfigure}[h]{0.45\textwidth}
    \centering
    \includegraphics[width=\textwidth]{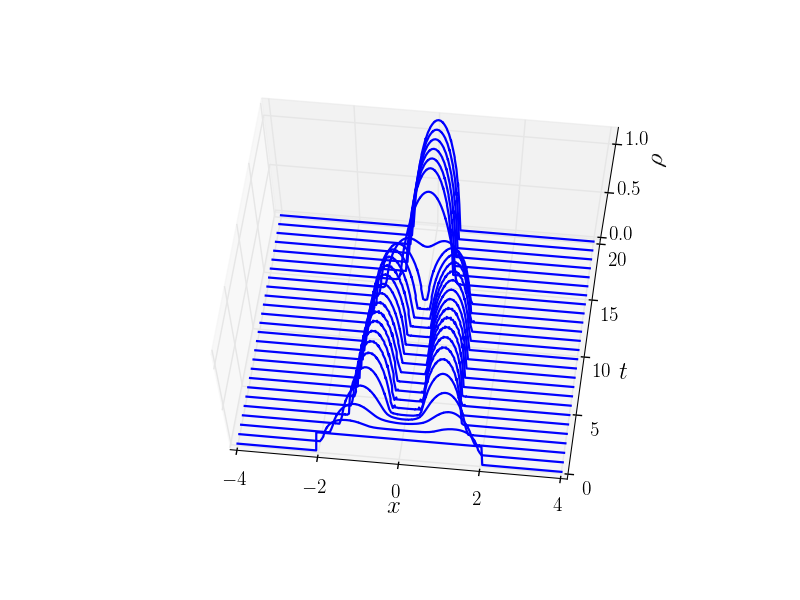}
      \caption{$\rho(x,0) = \chi_{[-2,2]}(x)$}\label{fig:local_connected_history.png}
    \end{subfigure}
     ~ 
     \begin{subfigure}[h]{0.45\textwidth}
    \centering
     \includegraphics[width=\textwidth]{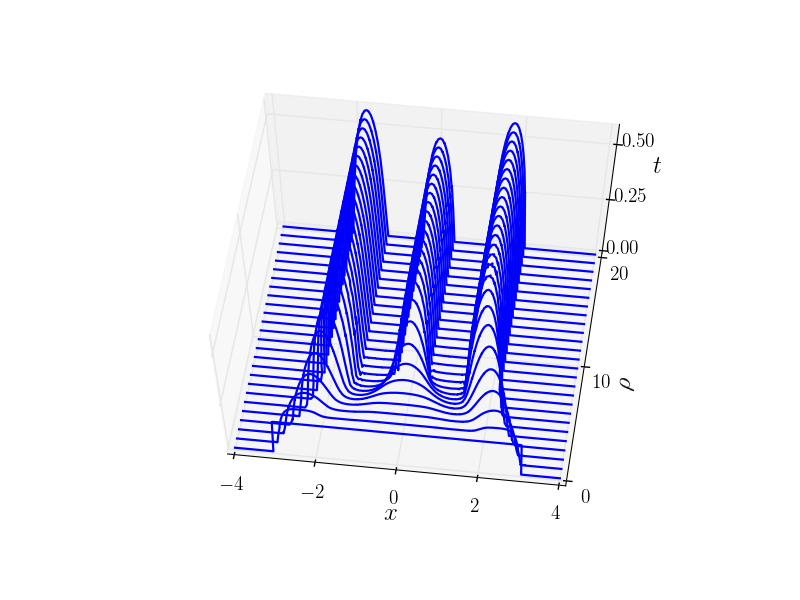}
      \caption{$\rho(x,0) =\chi_{[-3,3]}(x)$}\label{fig:local_disconnected_history.png}
    \end{subfigure}
    \caption{Evolution of \eqref{eq-nonsmooth-local-kernel} with different initial
    conditions.} \label{fig:connectivity}
  \end{figure}
  \begin{figure}[h!] 
    \centering 
    \begin{subfigure}[h]{0.45\textwidth}
    \centering
    \includegraphics[width=\textwidth]{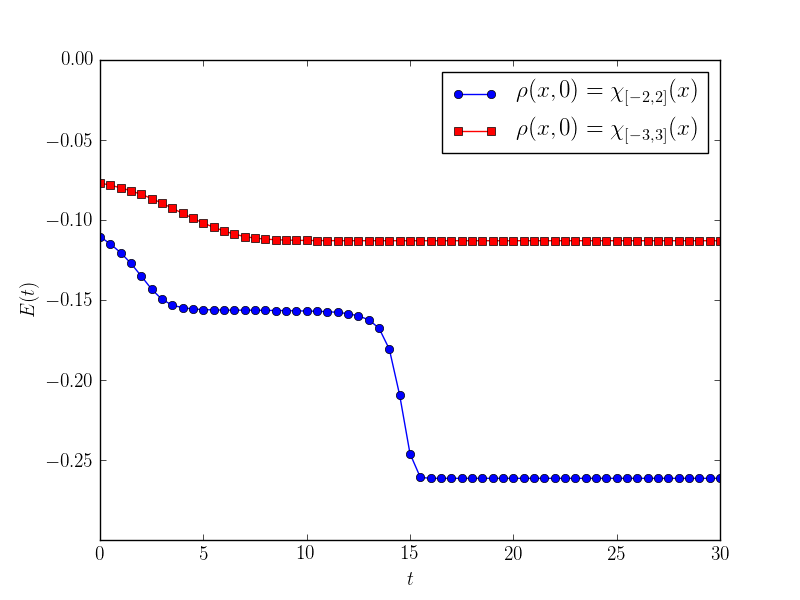}
      \caption{Entropy}\label{fig:entropy_local.png} 
    \end{subfigure}
    ~
    \begin{subfigure}[h]{0.45\textwidth}
    \centering
    \includegraphics[width=\textwidth]{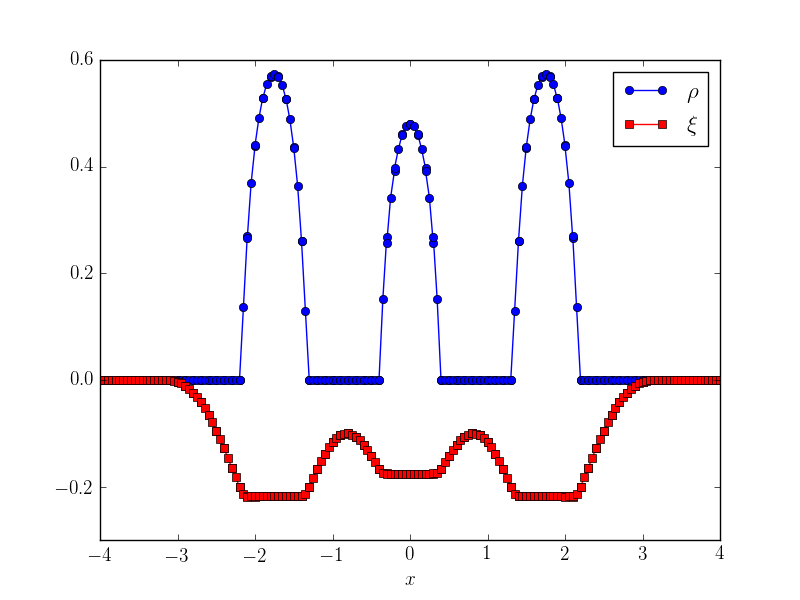} 
      \caption{$\xi$ and $\rho$ at $t = 30$ for $\rho(x,0) = \chi_{[-3,3]}(x)$
      \label{fig:disconnected_rho_xi.png} }
    \end{subfigure}
    \caption{Entropy and the steady state of \eqref{eq-nonsmooth-local-kernel}.} 
  \end{figure}
  To convince the readers that the disconnected profile in Figure 
  \ref{fig:local_disconnected_history.png} is indeed the stationary solution, we plot 
  $\rho$ and $\xi$ in Figure \ref{fig:disconnected_rho_xi.png}. As one can observe,
  $\rho\partial_x \xi \approx 0$. Hence $\partial_t\rho \approx 0$ and $\rho$ will be 
  trapped in this steady state. Therefore, the observation in Figure 
  \ref{fig:connectivity} confirms our previous claim, that different initial density 
  distributions may end up with steady states with distinct connectivity. Let us 
  remark that such phenomenon has also been explored numerically in 
  \cite{carrillo2015finite}.
\end{examp}
%---------------------------------------------------------------------------------
%
\section{Two dimensional numerical tests}
\label{sec-num-result-2d}
\setcounter{equation}{0} 
\setcounter{figure}{0} 
\setcounter{table}{0}
%In this section, we present selected numerical tests for our scheme for two
%dimensional problem.
\begin{Examp}[accuracy test] 
  We consider the initial value problem with a source term, 
  \begin{equation}\label{eq-2d-accuracy} 
    \left\{ 
      \begin{aligned}
        \partial_t \rho &= \nabla\cdot\left(\rho\nabla\left(\log(\rho) +\sin(x+y) 
        + W\ast\rho\right)\right) + F, (x,y)\in (-\pi,\pi)\times(-\pi,\pi),t>0\\ 
        W(x,y) &= \cos(x+y),\\ 
        F(x,y) &= 4\sin(x+y)+\cos(x+y+t)+\left(2+8\pi^2\right)\sin(x+y+t)\\
        &-2\cos\left(2(x+y)+t\right)-4\pi^2\cos\left(2\left(x+y+t\right)\right)\\
        \rho(x,y,0) &= \sin(x+y)+2.\\ 
      \end{aligned} 
    \right.
  \end{equation} 
  Here periodic boundary conditions are applied and $W\ast\rho =
  \int_{-\pi}^{\pi}\int_{-\pi}^{\pi}W(x-\xx,y-\yy)\rho(\xx,\yy)d\xx d\yy$. One
  can check that the exact solution to \eqref{eq-2d-accuracy} is $\rho(x,y,t) =
  2+\sin(x+y+t)$. We use the time step $\tau = 0.0005(h^x)^2$ for 
the calculation.  The error table is given in Table \ref{tab:2D_accuracy}.  
  \begin{table}[h!]
    \centering 
    \begin{tabular}{c|c|c|c|c|c|c|c} 
      \hline   
      k &N&$L^1$ error& order&$L^2$ error& order&$L^\infty$ error& order\\ 
      \hline 
      1&$10\times 10$ &5.11679     & -   & 1.02912      & -    & 0.400769     & -   \\ 
       &$20\times 20$& 1.15240     & 2.15& 0.231872     & 2.15 & 0.944210E-01 &2.09  \\
       &$40\times 40$& 0.301086    & 1.94& 0.621649E-01 & 1.90 & 0.241975E-01 &1.96  \\  
      \hline 
      2&$10\times 10$& 1.45276     & -   & 0.306948     & -& 0.167537    &  -   \\ 
       &$20\times 20$& 0.222326    & 2.71& 0.431470E-01 & 2.83 & 0.297235E-01 &2.49  \\  
       &$40\times 40$& 0.356271E-01& 2.64& 0.720268E-02 & 2.58 & 0.513506E-02 &2.53\\ 
      \hline 
      3&$10\times 10$& 0.377751E-01& -   & 0.792888E-02 & -    & 0.439644E-02 &-  \\  
       &$20\times 20$& 0.229221E-02& 4.04& 0.525495E-03 & 3.92 & 0.294146E-03 &3.90  \\
       &$40\times 40$& 0.137322E-03& 4.06& 0.333325E-04 & 3.98 & 0.205418E-04 &3.83  \\ 
      \hline 
      4&$10\times 10$& 0.224001E-02& -   & 0.511292E-03 & - & 0.294120E-03 &-   \\ 
       &$20\times 20$& 0.676477E-04& 5.05& 0.143874E-04 & 5.15 & 0.115235E-04 &4.67   \\ 
       &$40\times 40$& 0.243927E-05& 4.80& 0.524241E-06 & 4.78 & 0.450331E-06 &4.68   \\ 
     \hline 
   \end{tabular}
  \caption{Two dimensional accuracy test, with smooth data and periodic boundary
  conditions.}\label{tab:2D_accuracy} 
  \end{table} 
  \end{Examp}
  %---------------------------------------------------------------------------------------
\begin{Examp}[dumbbell model for polymers] 
  The dumbbell model is widely used to describe the rheological behavior of
  dilute polymer solutions. In this model, the polymer molecular is treated as
  a dumbbell made of two beads jointed by a spring. We will consider the
  simplest case, in which the flow is homogeneous and the scaling constant is
  set to $1$. Then the configuration probability density is governed by the
  Fokker-Plank equation,
  \begin{equation}\label{eqn:2Ddumbbell} 
    \partial_t\rho(\mathbf{x},t) = \nabla
  \cdot (\rho\nabla ( U - \frac{1}{2}\mathbf{x}K\mathbf{x})) + \Delta \rho.
  \end{equation} 
  Here $\mathbf{x} = (x,y)$ corresponds to the direction vector of the
  molecule, while $U$ is the spring potential and the $2\times 2$ matrix $K$
  is the velocity gradient of the background flow. For the incompressible flow,
  $\text{Tr}(K) = 0$.  In our numerical test, we consider the finitely
  extensible nonlinear elastic (FENE) model. The potential $U$ is given by
  \begin{equation}\label{eqn:2DdumbbellFENE} 
    U(\mathbf{x}) = -\frac{r^2}{2}\log\left(1
-\frac{|\mathbf{x}|^2}{r^2}\right). 
  \end{equation} 
  It is close to the Hookean potential when $|\mathbf{x}|\ll r$, while the
  distance between the two beads are restricted within $r$. Rigorously, one
  should consider the equation on the ball $\{\mathbf{x}: |\mathbf{x}|\leq
  r\}$, and the singularity near the boundary will cause challenges both
  analytically and numerically, see \cite{du2005fene,liu2008boundary,
  shen2012approximation,liu2014maximum} and the references therein.  While in
  our numerical test, we only consider a simpler case, that the solution seems
  to be supported within the ball and it hardly reaches the boundaries.  More
  specifically, we solve \eqref{eqn:2Ddumbbell}-\eqref{eqn:2DdumbbellFENE} with
  $r = 5$ and $K = \left(\begin{matrix} 0.3&0.2\\0.2&-0.3\end{matrix}\right)$.
  The initial condition is set as 
  \begin{equation}\label{eqn:2Ddumbbellinit} 
    \begin{split}
    \rho(x,y,0) = &c\max\{24-(x^2+y^2),0\}\\
    &\left(e^{-\frac{(x-2)^2+(y-2)^2}{2\sigma^2}}
      +e^{-\frac{(x+2)^2+(y+2)^2}{2\sigma^2}}
      +e^{-\frac{(x-1)^2+(y-1)^2}{2\sigma^2}}
      +e^{-\frac{(x+1)^2+(y+1)^2}{2\sigma^2}}
    \right),
    \end{split}
  \end{equation} 
  where $\sigma^2 = 0.2$ and $c$ the normalization constant. 
  We use the $P^3$ DG scheme on a $[-5,5]\times[-5,5]$ domain with $50\times
  50$ mesh cells.  The time step is chosen as $\tau = 2\times 10^{-6}$.  In
  Figure \ref{fig:2d-dumbbell}, we plot the evolution from $T=0$ to $T = 0.6$.
  It seems the numerical solution merges to a single peak.  
\end{Examp}
  \begin{figure}[h!] 
    \centering 
    \begin{subfigure}[h]{0.45\textwidth}
    \centering
    \includegraphics[width=\textwidth]{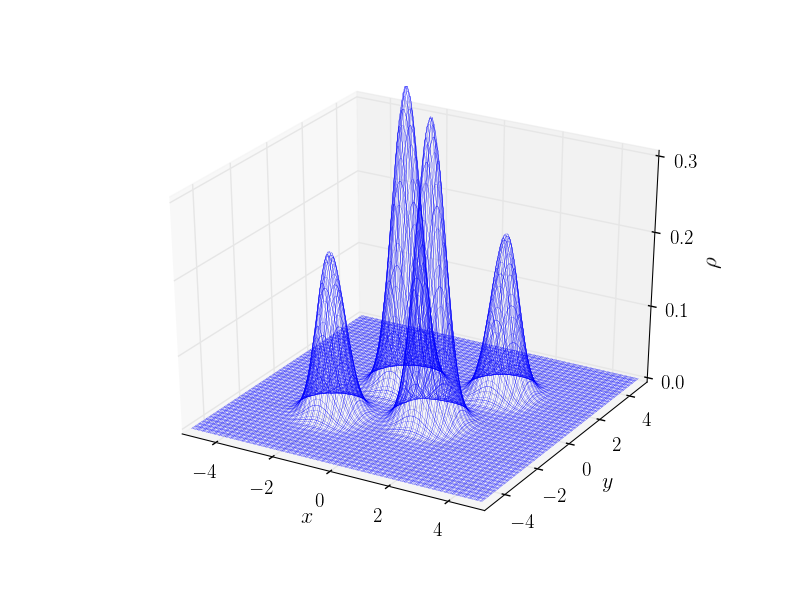} 
    \caption{{$t =0$.}\label{fig:Dumbbell_T0} }
    \end{subfigure}
    ~
    \begin{subfigure}[h]{0.45\textwidth}
    \centering
    \includegraphics[width=\textwidth]{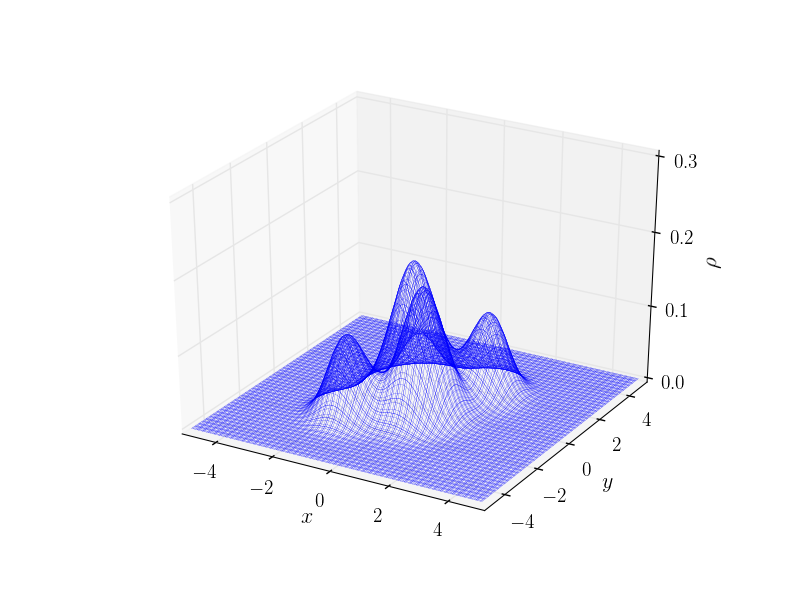}
      \caption{$t=0.2$.}\label{fig:Dumbbell_T0d2} 
    \end{subfigure}
    ~
    \begin{subfigure}[h]{0.45\textwidth}
    \centering
    \includegraphics[width=\textwidth]{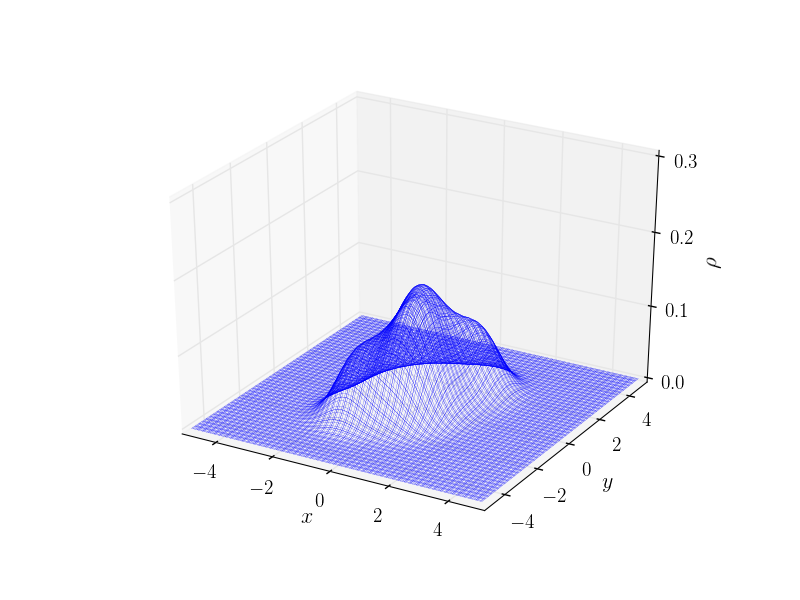}
      \caption{$t=0.4$.}\label{fig:Dumbbell_T0d4} 
    \end{subfigure}
    ~
    \begin{subfigure}[h]{0.45\textwidth}
    \centering
    \includegraphics[width=\textwidth]{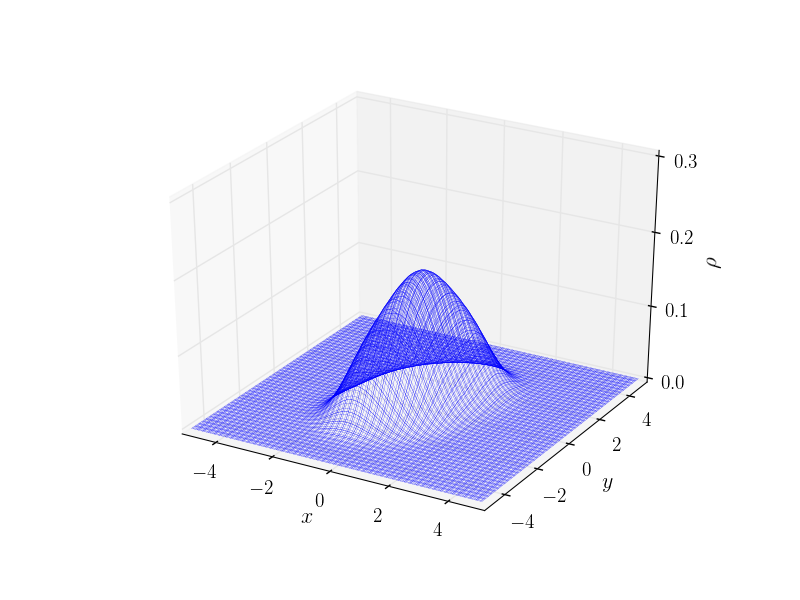}
      \caption{$t=0.6$.}\label{fig:Dumbbell_T0d6} 
    \end{subfigure}
  \caption{Evolution of the dumbbell model \eqref{eqn:2Ddumbbell} with 
  the FENE potential \eqref{eqn:2DdumbbellFENE}.}
  \label{fig:2d-dumbbell} 
\end{figure}
%------------------------------------------------------------------------------
\begin{Examp}[Patlak-Keller-Segel system for chemotaxis] 
  Chemotaxis is defined as a move of an organism along a chemical concentration 
  gradient. Bacteria can produce this chemo-attractant themselves, creating thus a 
  long range nonlocal interaction between them. The Patlak-Keller-Segel system is 
  a mathematical model to describe the motion of the organism. Its simplified version 
  is given by 
  \begin{equation*}%\label{eq-keller-segel} 
    \left\{ 
      \begin{aligned}
        \partial_t\rho &= \Delta \rho - \nabla\cdot (\rho \nabla c),\qquad
        (x,y)\in \mathbb{R}^2, t>0, \\ 
        -\Delta c &=\rho , \qquad (x,y)\in \mathbb{R}^2, t>0, \\ 
        \rho(x,y,0) &= \rho_0(x,y). 
    \end{aligned}\right.
  \end{equation*} 
  The equation can be rewritten in a compact way
  \begin{equation}\label{eq-2d-pks} 
    \partial_t\rho = \nabla\cdot \left(\rho
        \nabla \left(\log(\rho)+W\ast\rho\right)\right), \quad W(x,y) =
        \frac{1}{2\pi}\log(\sqrt{x^2+y^2})\quad (x,y)\in \mathbb{R}^2, t>0.
  \end{equation} 
  Such system has been studied intensively in the past decades.
  It has been shown that the behavior of the equation \eqref{eq-2d-pks} is
  determined by its initial mass (see \cite{blanchet2006twodimensional}, for
  example). If the initial value $M$ is smaller than a critical value $M_c =
  8\pi$, then the solution will exist globally. Otherwise, if $M$ lies beyond
  $M_c$, the solution will blow up in a finite time, which is referred as
  chemotactic collapse.

  \begin{figure}[h!] 
    \centering 
    \begin{subfigure}[h]{0.45\textwidth}
    \centering
    \includegraphics[width=\textwidth]{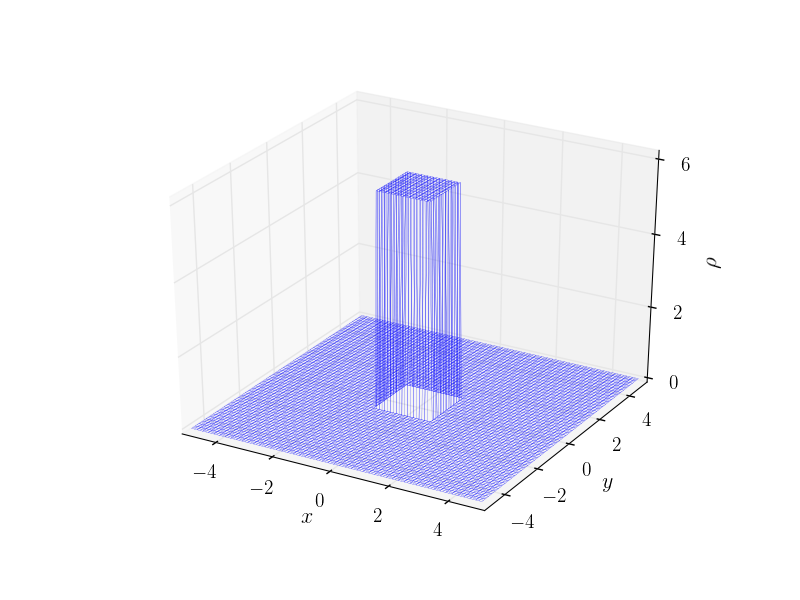} 
      \caption{$t =0$.}\label{fig:KellerSegel_dissip_T0} 
    \end{subfigure}
    ~
    \begin{subfigure}[h]{0.45\textwidth}
    \centering
    \includegraphics[width=\textwidth]{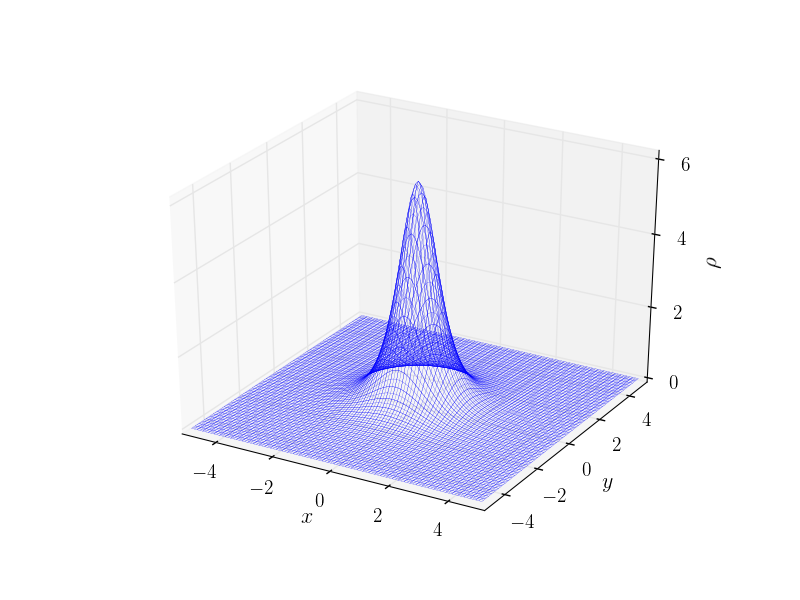}
      \caption{$t=4$.}\label{fig:KellerSegel_dissip_T4} 
    \end{subfigure}
  \vskip\baselineskip 
  \begin{subfigure}[h]{0.45\textwidth}
    \centering
    \includegraphics[width=\textwidth]{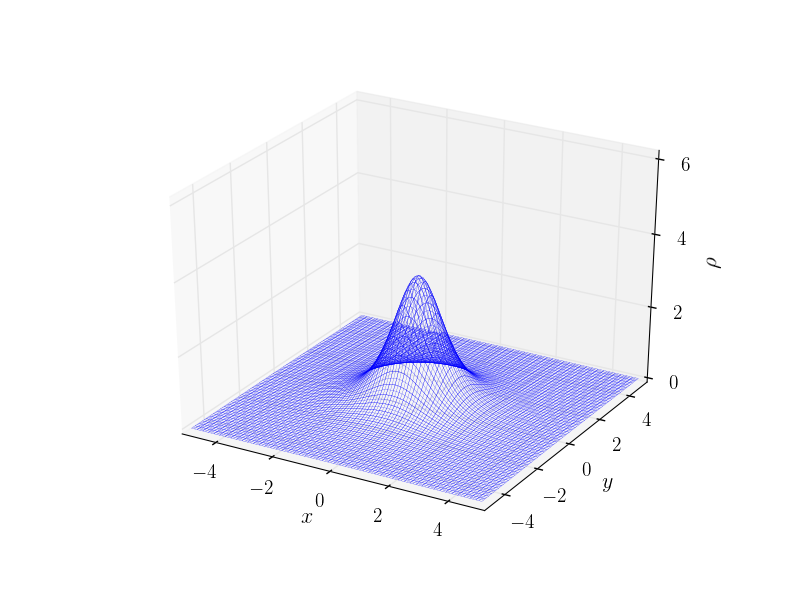}
    \caption{$t=8$.}\label{fig:KellerSegel_dissip_T8} 
  \end{subfigure}
    ~
    \begin{subfigure}[h]{0.45\textwidth}
      \centering
    \includegraphics[width=\textwidth]{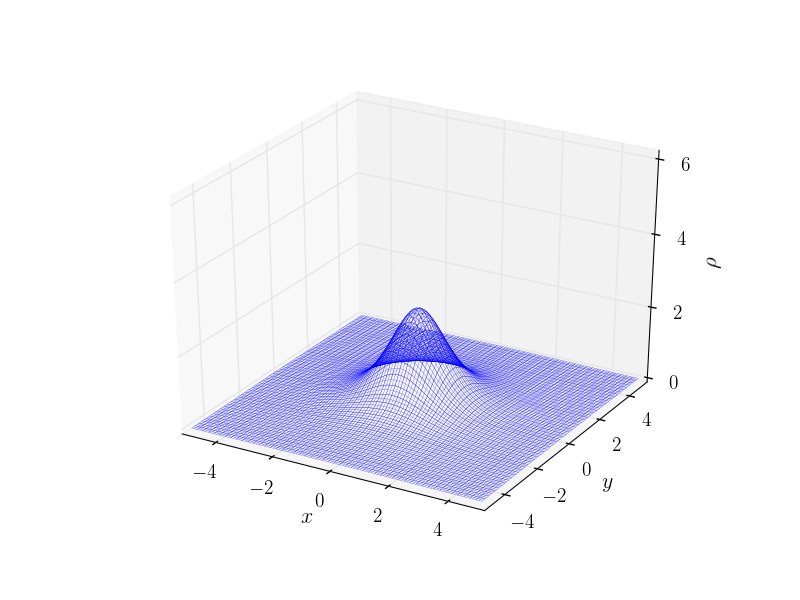}
      \caption{$t=12$.}\label{fig:KellerSegel_dissip_T12}
    \end{subfigure}
  \vskip\baselineskip 
  \begin{subfigure}[h]{0.45\textwidth}
    \centering
    \includegraphics[width=\textwidth]{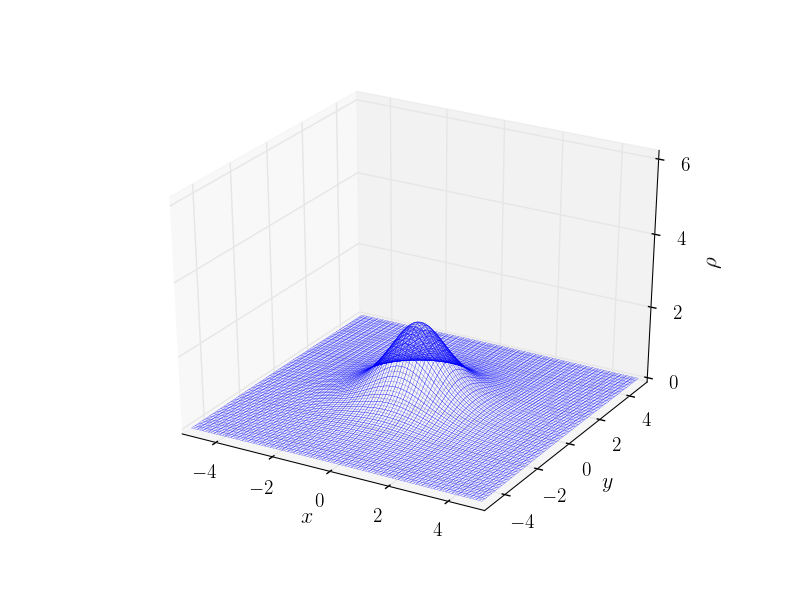}
    \caption{$t=16$.}\label{fig:KellerSegel_dissip_T16} 
  \end{subfigure}
    ~
    \begin{subfigure}[h]{0.45\textwidth}
    \centering
    \includegraphics[width=\textwidth]{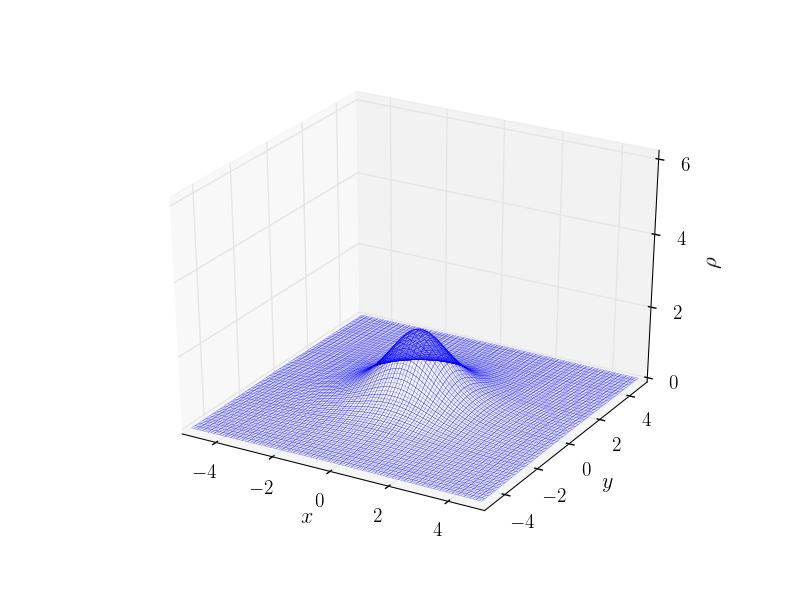}
    \caption{$t=20$.}\label{fig:KellerSegel_dissip_T20}
  \end{subfigure}
  \caption{Evolution of Patlak-Keller-Segel equation \eqref{eq-2d-pks} with
  subcritical mass.}\label{fig:2d-pks-subcritical} 
\end{figure} 
\begin{figure}[h!]
  \centering 
  \begin{subfigure}[h]{0.45\textwidth}
    \centering
    \includegraphics[width=\textwidth]{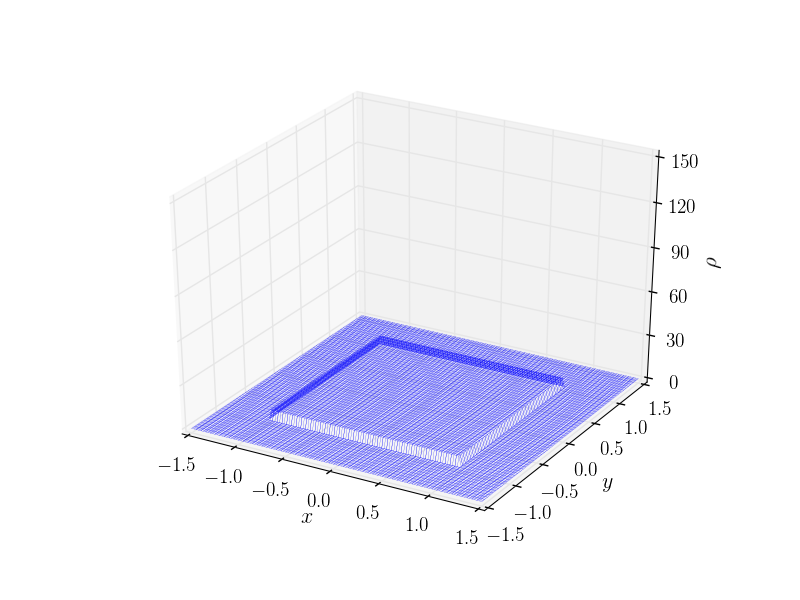}
    \caption{$t =0$.}\label{fig:KellerSegel_T0} 
  \end{subfigure}~
  \begin{subfigure}[h]{0.45\textwidth}
    \centering
    \includegraphics[width=\textwidth]{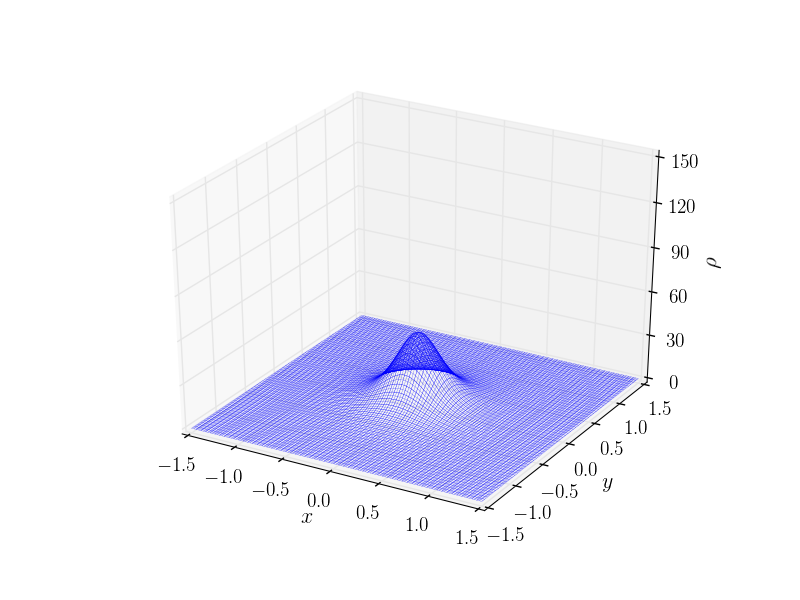}
    \caption{$t=0.5$.}\label{fig:KellerSegel_T0d5} 
  \end{subfigure}
  \vskip\baselineskip 
  \begin{subfigure}[h]{0.45\textwidth}
    \centering
    \includegraphics[width=\textwidth]{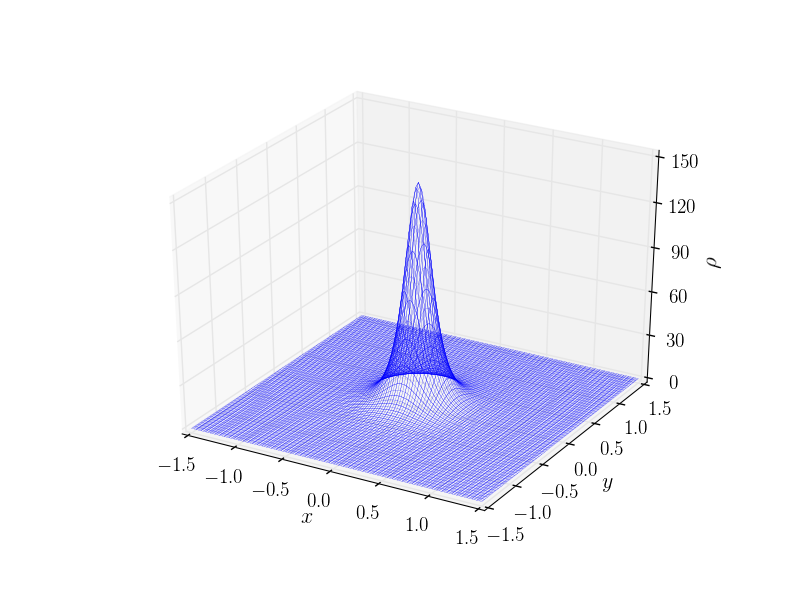}
    \caption{$t=1$.}\label{fig:KellerSegel_T1} 
  \end{subfigure}~
  \begin{subfigure}[h]{0.45\textwidth}
    \centering
    \includegraphics[width=\textwidth]{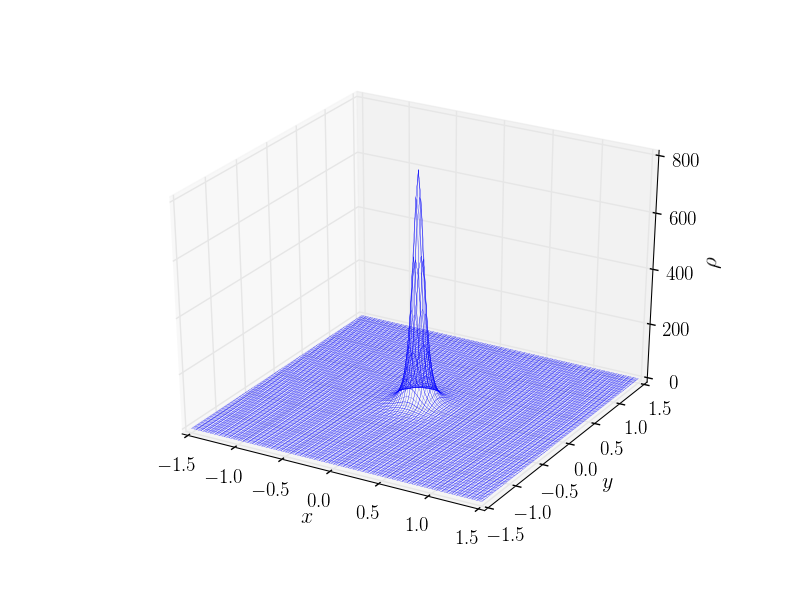}
    \caption{$t=1.5$.}\label{fig:KellerSegel_T1d5}
  \end{subfigure}
  \vskip\baselineskip 
  \begin{subfigure}[h]{0.45\textwidth}
    \centering
    \includegraphics[width=\textwidth]{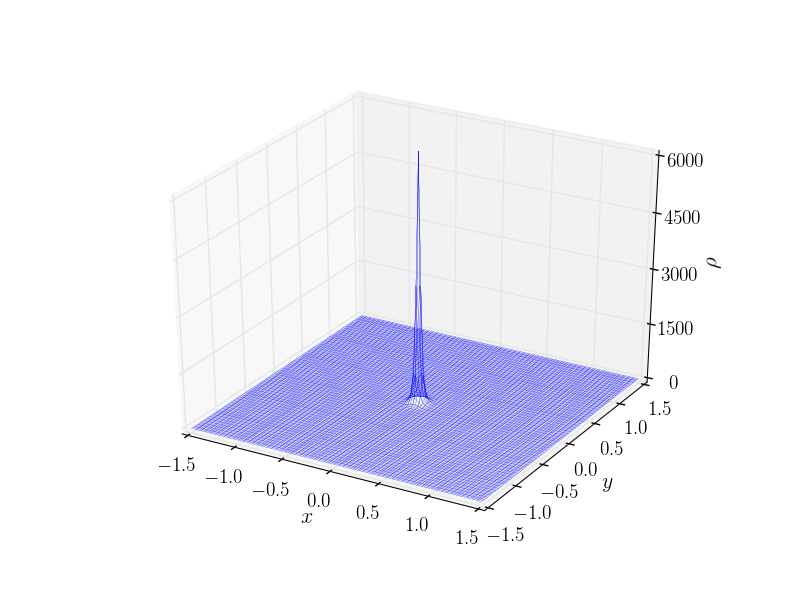}
    \caption{$t=2$.}\label{fig:KellerSegel_T2} 
  \end{subfigure}
  \caption{Evolution of Patlak-Keller-Segel equation 
    \eqref{eq-2d-pks} with supercritical mass.}\label{fig:2d-pks-supercritical} 
\end{figure} 

  In our numerical test, we consider both the subcritical case $\rho_0(x) =
  2(\pi-0.2)1_{[-1,1]\times[-1,1]}(x,y)$ and the super-critical case $\rho_0(x) =
  2(\pi+0.2)1_{[-1,1]\times[-1,1]}(x,y)$. The computational domain is set as
  $[-5,5]\times[-5,5]$ and $[-\frac{3}{2},\frac{3}{2}]\times[-\frac{3}{2},\frac{3}{2}]$ 
  respectively. We use the $P^2$ scheme for computation and $N_x=N_y=50$. 
  The time step is set as $\tau = 0.0005(h^x)^2$. The plots are given in Figure
  \ref{fig:2d-pks-subcritical} and Figure \ref{fig:2d-pks-supercritical}. As one
  can see, the numerical solution dissipates for $\rho_0(x) =
  2(\pi-0.2)1_{[-1,1]\times[-1,1]}(x,y)$ and it evolves to a spike centered at the
  origin for $\rho_0(x) = 2(\pi+0.2)1_{[-1,1]\times[-1,1]}(x,y)$.
\end{Examp}

\section{Concluding remarks} \label{sec-concluding-remarks} 

In this paper, we develop a high order DG method for solving a class of
parabolic equations and gradient flow problems with interaction potentials.
Such equations are governed by an entropy-entropy dissipation relationship and
are featured with non-negative solutions. By applying the Gauss-Lobatto
quadrature rule, our numerical scheme admits an entropy inequality for problems
with smooth interaction kernels. Furthermore, with the SSP-RK time
discretization and the positivity-preserving limiter, the fully discretized
scheme preserves the non-negativity of the numerical density. It also conserves
mass, and preserves numerical steady states for certain problems. We also apply the
method to two dimensional problems on Cartesian meshes. Numerical examples are
given to demonstrate the performance of the scheme.  

%%%%%%%%%%%%%%%%%%%%%%%%%%

 \end{document}